\sloppy \pagestyle{plain}
\newcounter{cequation}[section]
\theoremstyle{plain}
\newtheorem{thm}[cequation]{Theorem}%[section] %, чтобы нумеровать сначала в каждом разделе
\newtheorem{lm}[cequation]{Lemma}
\newtheorem{thm-rem}[cequation]{Theorem-Remark}
\newtheorem{cor}[cequation]{Corollary}
\newtheorem{st}[cequation]{Proposition}
\newtheorem{q}[cequation]{Question}
\theoremstyle{definition}
\newtheorem{defn}[cequation]{Definition}
\newtheorem{ex}[cequation]{Example}
\newtheorem{n}[cequation]{Remark}
\theoremstyle{remark}
\makeatletter\@addtoreset{equation}{section}
\newcommand{\Aut}{\operatorname{Aut}}
\newcommand{\Ime}{\operatorname{Im}}
\newcommand{\Nm}{\operatorname{Nm}}
\newcommand{\ord}{\operatorname{ord}}
\newcommand{\Bir}{\operatorname{Bir}}
\newcommand{\id}{\operatorname{id}}
\newcommand{\rk}{\operatorname{rk}}
\newcommand{\SO}{\operatorname{SO}}
\newcommand{\GL}{\operatorname{GL}}
\newcommand{\lcm}{\operatorname{lcm}}
\newcommand{\im}{\operatorname{Im}}
\newcommand{\Ker}{\operatorname{Ker}}
\newcommand{\Mat}{\operatorname{Mat}}
\newcommand{\Gal}{\operatorname{Gal}}
\newcommand{\Br}{\operatorname{Br}}
\newcommand{\Pic}{\operatorname{Pic}}
\newcommand{\Char}{\operatorname{char}}
\newcommand{\tr}{\operatorname{tr.deg}}
\newcommand{\mumu}{{\boldsymbol{\mu}}}
\date{}
\title{Finite Subgroups of Automorphism Groups of Severi--Brauer Varieties of Prime Degree}
\author{Alexandra Sonina}
\address{
Steklov Mathematical Institute of Russian Academy of Sciences, 8 Gubkina St., Moscow 119333, Russia
}
\email{sasha-sonina@mail.ru}
\begin{document}

\begin{abstract}

We classify finite subgroups of automorphism groups of non-trivial Severi–Brauer varieties of dimension $q-1$, where $q\geqslant 3$ is a prime number, over an arbitrary field. We also construct families of examples, namely, for every consistent set of finite groups, we construct a field together with a non-trivial Severi–Brauer variety over that field such that every group in the set acts on the constructed variety. Additionally, we show that non-trivial Severi–Brauer varieties of dimension $q-1$, where $q \geqslant 3$ is a prime number, over a field of characteristic not equal to $q$ are not $G$-birationally rigid. 

\end{abstract}

\maketitle
\tableofcontents

\section{Introduction}

\textit{A Severi--Brauer variety} of dimension $n - 1$ over a field $\mathbb{K}$ is an algebraic variety $X$ which becomes isomorphic to $\mathbb{P}^{n-1}_{\overline{\mathbb{K}}}$ after base change to an algebraic closure $\overline{\mathbb{K}}$ of $\mathbb{K}$. 
A Severi--Brauer variety is called \textit{non-trivial} if it is not isomorphic to the projective space~$\mathbb{P}^{n-1}_{\mathbb{K}}$ over the base field~$\mathbb{K}$. There is a well-known bijection between Severi--Brauer varieties of dimension~$n - 1$ and central simple algebras of degree~$n$, if one fixes a field~$\mathbb{K}$ and a positive integer~$n$. Furthermore, this bijection preserves automorphism groups (see e.g.~\cite[~Chapter~5]{G-S}). 

One may ask the following question: ``Which finite groups can act regularly and faithfully on a Severi--Brauer variety?''. Unfortunately, the answer to this question is not very meaningful. By any division algebra~$D$ and any finite group~$G$, one can construct the matrix algebra~$\Mat_{|G|}(D)$, which is always a central simple algebra. One can notice that~$G$ is embedded into~$\GL_{|G|}(D)$ by the left action of~$G$ on~$D^{|G|}$, and the image of~$G$ in~$\GL_{|G|}$ intersects with~$\mathbb{K}^*$, which is embedded as diagonal matrices, trivially. Therefore~$G$ appears as a subgroup of~$\Aut(\Mat_{|G|}(D))$. By Wedderburn's theorem (see e.g.~{\cite[~Theorem~2.1.3]{G-S}}) for every finite-dimensional central simple algebra $A$ over a field~$\mathbb{K}$ there exist an integer $n \geqslant 1$ and a division algebra $D \supset \mathbb{K}$ such that $A$ is isomorphic to a matrix algebra~$\Mat_n(D)$. That is why one may wonder which finite groups can act regularly and faithfully on Severi--Brauer varieties that correspond to division algebras. Such Severi--Brauer varieties are called \textit{minimal}. From a geometric point of view, one can describe them as the Severi--Brauer varieties that have no non-trivial twisted linear Severi--Brauer subvarieties (see e.g.~\cite[Corollary~5.3.5]{G-S}). If~$p$ is a prime number, by Wedderburn's theorem, every non-trivial Severi--Brauer variety of dimension~$p-1$ is minimal. We focus on those varieties. The question was first proposed in this generality by Anna Savelyeva. In the article~\cite{ASav}, she provided Theorem \ref{Sav}, where the conditions for finite subgroups in the case of dimension~$p-1$ are given. 

%Unfortunately, this article contains an incorrect statement. Although this statement was not used in the argument, it may have hidden the full result. In Theorem-Remark \ref{ASavnew}, we explain what was actually established in~\cite[~Theorem~1.4]{ASav}.

Let $\mumu_n$ denote the cyclic group of order~$n$. Let $q$ be a prime number. Suppose $n$ is divisible only by prime numbers $p$ such that $p$ is congruent to $1$ modulo $q$. Then a semidirect product $\mumu_n \rtimes \mumu_q$ is called {\sf balanced} if $\mumu_n \rtimes \mumu_q$ has trivial center. An important remark is that for a fixed pair $(n,q)$, a balanced semidirect product might not be unique. For more details, see Section \ref{balance}.

The question about finite groups acting on Severi--Brauer varieties first appeared in~\cite{Sh20}, which provides a complete classification of finite subgroups of the automorphism group for surfaces over fields of characteristic zero.

    \begin{thm}[{\cite[Theorem 1.3]{Sh20}}]
      Let $\mathbb{K}$ be a field of characteristic $0$. Let $S$ be a non-trivial Severi--Brauer surface over $\mathbb{K}$. Let~$G$ be a finite subgroup of~$\Aut(S)$, then there exists a positive integer~$n$ such that~$G$ is isomorphic to a subgroup of~$\mumu_3 \times (\mumu_n \rtimes \mumu_3)$, where the semidirect product is balanced.
    \end{thm}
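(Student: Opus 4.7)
The plan is to note first that this statement is the special case $q = 3$ of Theorem~\ref{Sav}, since characteristic zero ensures $\Char(\mathbb{K}) \neq 3$ and a Severi--Brauer surface has dimension $2 = 3 - 1$. For a direct surface-specific proof I would use the equivalence between non-trivial Severi--Brauer surfaces and degree-$3$ division algebras: writing $S$ as the Severi--Brauer variety of a degree-$3$ division algebra $A$ over $\mathbb{K}$, one has $\Aut(S) \cong A^*/\mathbb{K}^* = \PGL_1(A)$. Base change to $\bar{\mathbb{K}}$ then provides an embedding $G \hookrightarrow \PGL_3(\bar{\mathbb{K}})$, and the argument proceeds by analyzing which such subgroups can descend to a non-trivial $\mathbb{K}$-form.

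The next step is to invoke the classical Blichfeldt--Mitchell classification of finite subgroups of $\PGL_3(\mathbb{C})$. These split into (i) abelian subgroups of a maximal torus $T \cong \mathbb{G}_m^2 \subset \PGL_3$; (ii) ``imprimitive'' extensions of such torus subgroups by a cyclic Weyl action of order dividing $3$; and (iii) a finite list of exceptional primitive subgroups, namely the Hessian groups of orders $9, 36, 72, 216$, the simple group $\mathrm{PSL}_2(\mathbb{F}_7)$ of order $168$, and the Valentiner group isomorphic to $A_6$.

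The main obstacle is to rule out the exceptional primitive subgroups using non-triviality of $S$. Each such subgroup preserves a $\bar{\mathbb{K}}$-defined distinguished structure on $\mathbb{P}^2_{\bar{\mathbb{K}}}$ of low degree: the Hesse pencil with its nine flex points for the Hessian groups, the Klein quartic for $\mathrm{PSL}_2(\mathbb{F}_7)$, and an invariant smooth sextic for the Valentiner group. A careful Galois-descent analysis of such a $G$-invariant structure produces $\mathbb{K}$-rational zero-cycles on $S$ of degree coprime to $3$; since the index of $S$ equals $\deg A = 3$, this forces $S$ to be trivial, contradicting the hypothesis. This case-by-case elimination is the substantive content of~\cite{Sh20}.

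With case (iii) excluded, $G$ fits into an extension $1 \to N \to G \to Q \to 1$ with $N$ abelian inside a maximal $\mathbb{K}$-subtorus $T = L^*/\mathbb{K}^* \subset \PGL_1(A)$ for $L \subset A$ a maximal cubic subfield, and $Q \hookrightarrow \mumu_3$ acting on $N$ through the Weyl/Galois action of order $3$. The final step is to decompose $T$ equivariantly into the $\mumu_3$-invariant diagonal (isomorphic to $\mumu_3$ and contributing the first factor) and a complementary cyclic piece $\mumu_n$ on which the outer $\mumu_3$ acts non-trivially, yielding an embedding $G \hookrightarrow \mumu_3 \times (\mumu_n \rtimes \mumu_3)$. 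The precise eigenvalue condition on this outer action is the balanced condition of Definition~\ref{bal}, and it is forced by the non-triviality of $A$: any violation would produce a $\mumu_3$-invariant character of $T$ descending to a $\mathbb{K}$-rational splitting of $A$.
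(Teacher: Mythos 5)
The paper does not prove this statement at all: it is quoted verbatim from \cite{Sh20}, so there is no internal proof to match. Your opening observation is nevertheless the right one in the context of this paper — the statement is literally the $q=3$ case of Theorem~\ref{Sav} (characteristic $0$ guarantees $\Char(\mathbb{K})\neq 3$, and a surface has dimension $3-1$), and that one-line reduction is a complete argument given the results the paper takes as black boxes.

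The direct argument you then sketch, however, has genuine gaps. First, your elimination of the exceptional primitive subgroups does not work as stated: the nine base points of the Hesse pencil form a zero-cycle of degree $9$ and the Valentiner invariant is a sextic, and neither $9$ nor $6$ is coprime to $3$, so no contradiction with the index of $S$ being $3$ arises; moreover a $G$-invariant \emph{curve} on $\mathbb{P}^2_{\bar{\mathbb{K}}}$ does not by itself produce a $\mathbb{K}$-rational divisor class or zero-cycle — the class $dH$ is always Galois-invariant, and the obstruction to its descending to $\Pic(S)$ is a Brauer-group issue untouched by $G$-invariance. The correct and far simpler elimination is via element orders: any $u\in A^*\setminus\mathbb{K}^*$ generates a subfield $\mathbb{K}(u)$ of the degree-$3$ division algebra $A$, hence $[\mathbb{K}(u):\mathbb{K}]=3$; if $u^2\in\mathbb{K}^*$ this degree would be at most $2$, so $\Aut(S)$ has no elements of order $2$, which kills $\mathfrak{A}_6$, $\mathrm{PSL}_2(\mathbb{F}_7)$, the icosahedral group $\mathfrak{A}_5$ (which you omit from the Blichfeldt--Mitchell list), the Hessian groups of orders $36$, $72$, $216$, and also the imprimitive groups whose permutation part is the full $S_3$ rather than $\mumu_3$. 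The same computation shows every prime order occurring is $3$ or $\equiv 1 \pmod 3$, since an element of order $p\neq 3$ lifts, after scaling, to a primitive $p$-th root of unity generating a cubic extension, forcing $3\mid p-1$. Second, your final step — that failure of the balanced condition ``would produce a $\mumu_3$-invariant character of $T$ descending to a $\mathbb{K}$-rational splitting of $A$'' — is asserted, not proved, and is not obviously correct; the actual point is that if $\chi$ killed the projection to some $\mumu_n(p_i)^*$, the order-$3$ element would centralize an element of order $p_i$ lying in a different maximal \'etale subalgebra, and one must rule this out by an explicit analysis of commuting elements in $A^*/\mathbb{K}^*$. As written, the sketch would not compile into a proof without these repairs.
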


    \begin{thm}[{\cite[Theorem 1.3]{Sh20}}]
      \label{Shr2}
      For any $n$ such that the semidirect product $\mumu_n \rtimes \mumu_3$ is balanced, there exists a field~$\mathbb{K}\subset \overline{\mathbb{Q}}$ and a non-trivial Severi--Brauer surface~$S$ over $\mathbb{K}$ such that~${\mumu_3 \times (\mumu_n \rtimes \mumu_3)}$ is a subgroup of $\Aut(S)$.
    \end{thm}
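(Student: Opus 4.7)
The plan is to construct $S$ as the Severi--Brauer surface attached to a cyclic division algebra $A=(L/\mathbb{K},\sigma,a)$ of degree~$3$ over a number field $\mathbb{K}\subset\overline{\mathbb{Q}}$ and to exhibit the group $\mumu_3\times(\mumu_n\rtimes\mumu_3)$ inside $\Aut(S)\cong A^*/\mathbb{K}^*$ by means of three explicit generators: a Kummer element $\alpha\in L$ satisfying $\alpha^3\in\mathbb{K}^*$ and $\sigma(\alpha)=\zeta_3\alpha$; a primitive root of unity $\zeta_n\in L$; and the Noether element $y\in A$ with $y\ell y^{-1}=\sigma(\ell)$ for every $\ell\in L$ and $y^3=a$.

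First I would unpack the balancedness hypothesis on $\mumu_n\rtimes\mumu_3$ into the existence of an element $t\in(\mathbb{Z}/n)^*$ of order~$3$ realising the prescribed action of $\mumu_3$ on $\mumu_n$. Inside $\mathbb{Q}(\zeta_{3n})$ I take the automorphism $\tau$ fixing $\zeta_3$ and sending $\zeta_n\mapsto\zeta_n^t$. Setting $L:=\mathbb{Q}(\zeta_{3n})$ and $\mathbb{K}:=L^{\langle\tau\rangle}$ produces a cyclic cubic extension $L/\mathbb{K}$ with $\zeta_3\in\mathbb{K}$, and Kummer theory then supplies $\alpha\in L$ with $\sigma(\alpha)=\zeta_3\alpha$, where we set $\sigma:=\tau$.

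The technical heart of the argument is the choice of a scalar $a\in\mathbb{K}^*$ such that $A=(L/\mathbb{K},\sigma,a)$ is a division algebra, and this is where I expect the main obstacle. I would invoke the Brauer--Hasse--Noether theorem: by Chebotarev density there exists a prime $\mathfrak{p}$ of $\mathbb{K}$ that is inert in $L/\mathbb{K}$; choosing $a$ to be a uniformizer at~$\mathfrak{p}$ and a local unit at all other relevant places, the local Hasse invariant of $A$ at $\mathfrak{p}$ becomes $\tfrac{1}{3}\pmod{\mathbb{Z}}$, so $A$ is nonsplit at $\mathfrak{p}$ and hence a division algebra over $\mathbb{K}$.

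It remains to verify the group relations in $A^*/\mathbb{K}^*$. Both $\bar\alpha$ and $\bar y$ have order~$3$, since $L\neq\mathbb{K}$ and $y$ does not centralise $L$; and $\bar\zeta_n$ has order exactly~$n$, because $\zeta_n^k\in\mathbb{K}^*=L^{*\tau}$ forces $n\mid k(t-1)$, and the balancedness hypothesis is set up precisely so that this implies $n\mid k$. The commutation relations follow: $[\alpha,\zeta_n]=1$ because both lie in the commutative field $L$; $[y,\alpha]=\zeta_3\in\mathbb{K}^*$ by the defining relation of $y$, so $\bar\alpha$ is central in the subgroup generated by $\bar\alpha,\bar\zeta_n,\bar y$; and $\bar y\bar\zeta_n\bar y^{-1}=\overline{\sigma(\zeta_n)}=\bar\zeta_n^t$ realises the prescribed semidirect product. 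Hence $\langle\bar\alpha\rangle\times(\langle\bar\zeta_n\rangle\rtimes\langle\bar y\rangle)\cong\mumu_3\times(\mumu_n\rtimes\mumu_3)$ embeds into $\Aut(S)$, as required.
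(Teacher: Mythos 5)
Your proof is correct, and it takes a genuinely different route from the one in the paper. Note first that the paper only quotes Theorem~\ref{Shr2} from \cite{Sh20}; what it actually proves is the strengthening, Theorem~\ref{example}, where a \emph{single} Severi--Brauer variety over $\Bbbk(t)$ carries all the groups $\mumu_3\times(\mumu_n\rtimes\mumu_3)$ at once. Your construction instead produces a separate number field $\mathbb{K}=\mathbb{Q}(\zeta_{3n})^{\langle\tau\rangle}$ for each $n$, which is exactly what the statement as written requires (it demands $\mathbb{K}\subset\overline{\mathbb{Q}}$, a condition the paper's universal field $\Bbbk(t)$ does not satisfy --- this is precisely the point of the paper's open question about universal examples inside $\overline{\mathbb{Q}}$). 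The main technical divergence is in how non-triviality of the cyclic algebra is forced: the paper passes to $L(t)/\Bbbk(t)$ and uses Lemma~\ref{sur} (the transcendental $t$ is not a norm), whereas you stay over a number field and use Chebotarev plus the Brauer--Hasse--Noether local invariant at an inert prime; both correctly reduce to Lemma~\ref{nm}, and your version is what makes the $\mathbb{K}\subset\overline{\mathbb{Q}}$ conclusion possible. The group-theoretic verification is essentially parallel in both arguments (a Kummer element with $\sigma(\alpha)=\zeta_3\alpha$ giving the central $\mumu_3$ because $\zeta_3\in\mathbb{K}$, the root of unity $\zeta_n$ giving $\mumu_n$ with order exactly $n$ thanks to balancedness forcing $t\not\equiv 1\pmod{p_i}$, and the symbol $y$ giving the acting $\mumu_3$). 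The only step I would ask you to make explicit is that $\langle\bar\alpha\rangle\cap\langle\bar\zeta_n,\bar y\rangle$ is trivial, so that the product is genuinely direct: if $\alpha=c\,\zeta_n^{i}$ with $c\in\mathbb{K}^*$, then applying $\sigma$ gives $\zeta_3=\zeta_n^{i(t-1)}$, impossible since $\gcd(3,n)=1$; this one-line check closes the argument.
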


Moreover, a later article \cite{Sh20B} provided a classification of finite subgroups of the birational automorphism group for surfaces over fields of characteristic zero.

\begin{thm}[{\cite[Proposition 3.7 and Corollary 4.5]{Sh20B}}]
     \label{Shr3}
   Let~$S$ be a non-trivial Severi--Brauer surface over a field of characteristic zero. Then any finite subgroup of~$\Bir(S)$ is either  conjugate to a subgroup of~$\Aut(S)$ or isomorphic to a subgroup of~$\mumu^3_3$.
\end{thm}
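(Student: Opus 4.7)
The natural approach is to apply the $G$-equivariant minimal model program. For a finite subgroup $G\subset\Bir(S)$, I would first resolve the indeterminacy of the $G$-action on $S$ to obtain a smooth projective surface with a regular $G$-action and then run the $G$-MMP, producing a $G$-equivariant birational map $S\dashrightarrow S'$ where $S'\to B$ is a $G$-Mori fiber space. Since $S_{\bar\Bbbk}\cong\mathbb{P}^2$, the surface $S'_{\bar\Bbbk}$ is a smooth rational surface, so by the Iskovskikh--Manin classification $S'$ is either a $G$-del Pezzo surface with $(\Pic S')^G\cong\mathbb{Z}$, or it admits the structure of a $G$-conic bundle over a smooth geometrically rational curve.

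The heart of the argument is to determine which such $S'$ can actually be $\Bbbk$-birational to a non-trivial Severi--Brauer surface. Here I would use the birational invariance of the unramified Brauer group among smooth projective surfaces over a field of characteristic zero: the non-trivial $3$-torsion class $[S]\in\Br(\Bbbk)$ must be inherited by $S'$, so in particular $S'$ has no $\Bbbk$-point. Combined with a Galois-module analysis of $\Pic(S'_{\bar\Bbbk})$, this should rule out all $G$-conic bundle structures (since such a bundle typically admits a $\Bbbk$-point, or has an odd-degree multisection that trivialises the Brauer class) and all $G$-del Pezzo degrees different from $6$ and $9$. Only two models should survive: either $S'$ is itself a non-trivial Severi--Brauer surface (hence $\Bbbk$-isomorphic to $S$ by minimality), or $S'$ is the specific twisted form of the toric del Pezzo sextic associated to the cyclic algebra defining $S$.

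In the first case one obtains $G\subset\Aut(S)$ up to conjugation directly. In the second, I would compute $\Aut(S')$ by twisting the geometric automorphism group of a del Pezzo sextic, which has the form $T\rtimes D_{12}$ with $T=\mathbb{G}_m^2$ the toric torus and $D_{12}$ the dihedral group acting on the hexagon of $(-1)$-curves. Under the Galois twist by the cocycle defining $[S]$, the $3$-torsion $T[3]$ of the twisted torus combines with the cyclic subgroup $\mumu_3\subset D_{12}$ to yield an elementary abelian group of shape $\mumu_3^3$, and any finite subgroup of $\Aut(S')$ is conjugate into this subgroup. Pulling back through the $G$-birational map $S\dashrightarrow S'$ then expresses $G$ as conjugate in $\Bir(S)$ to a subgroup of $\mumu_3^3$.

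The principal obstacle will be the middle step: the Galois-module and Brauer-group bookkeeping required to eliminate every $G$-conic bundle model and every $G$-del Pezzo of degree $\notin\{6,9\}$, and to single out the twisted sextic as the unique non-$S$ survivor. The explicit identification of the twisted torus $3$-torsion with $\mumu_3^2$, and the verification that the resulting extension by the cyclic $\mumu_3\subset D_{12}$ is an elementary abelian $\mumu_3^3$ rather than a non-abelian Heisenberg-type group, also require care; but both reduce to routine checks once the correct twisted form is pinned down.
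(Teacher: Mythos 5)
First, note that the paper does not prove Theorem~\ref{Shr3}: it is imported verbatim from~\cite{Sh20B}, so there is no in-paper proof to compare against. Your sketch does follow the broad strategy of the proof in that reference (regularize the $G$-action, run the $G$-MMP, eliminate conic bundles and most del Pezzo degrees by an arithmetic obstruction, and analyze the surviving degree~$6$ model), so the architecture is right. But as written the argument has gaps at exactly the points where the real work happens.

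The arithmetic obstruction you want is not ``the unramified Brauer class is nontrivial, hence no $\Bbbk$-point'': the clean statement is that the index (the gcd of degrees of closed points) is a birational invariant of smooth proper varieties, and equals $3$ for a non-trivial Severi--Brauer surface. A $G$-conic bundle over a geometrically rational curve has a closed point of degree dividing $4$ (the base is a conic, the fibre is a conic), which kills that case; your ``odd-degree multisection'' phrasing tests divisibility by $2$ when the relevant prime is $3$. For a $G$-del Pezzo surface of degree $d$, the anticanonical class gives a zero-cycle of degree $d$, so the index argument only forces $3\mid d$, i.e.\ $d\in\{3,6,9\}$. The case $d=3$ is therefore \emph{not} ruled out by this bookkeeping, and cubic surfaces birational to $S$ genuinely exist (blow up a general closed point of degree $6$); one must show that no $G$-\emph{minimal} cubic surface with $(\Pic S')^G\cong\mathbb{Z}$ arises, which is the most delicate case analysis in~\cite{Sh20B} and is absent from your plan. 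Finally, the assertion that every finite subgroup of the automorphism group of the twisted sextic is conjugate into $\mumu_3^3$ --- rather than into a Heisenberg-type extension of $\mumu_3$ by $\mumu_3^2$, or into a larger subgroup of the normalizer $T\rtimes D_{12}$ of the torus --- is precisely the content of the theorem for that model; you flag it as a ``routine check'' but give no argument, so the proposal as it stands does not yet constitute a proof.
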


The following theorem answers the question of which groups can act on non-trivial Severi--Brauer varieties of dimension $q - 1$, where $q$ is an odd prime number.
 %The conditions for finite subgroups in the last case are given by the following theorem.

\begin{thm}[{\cite[Theorem 1.4]{ASav}}]
    \label{Sav}
    Let~$X$ be a non-trivial Severi--Brauer variety of dimension~$q - 1$ over a field~$\mathbb{K}$, where~$q \geqslant 3$ is prime and $q \neq \Char(\mathbb{K})$. Let~$G$ be a finite subgroup of~$\Aut(X)$, then there exists a positive integer~$n$ such that~$G$ is isomorphic to a subgroup of~$\mumu_q \times (\mumu_n \rtimes \mumu_q)$, where the semidirect product is balanced.
\end{thm}

%In the article \cite{Sh20B} was proven that any finite subgroup of~$\Bir(S)$ is conjugate either to a subgroup of~$\Aut(S)$, or to a subgroup of~$\Aut(S')$, where~$S'$ is a smooth cubic surface over base field~$\mathbb{L}$ birational to~$S$ such that rk~$\Pic(S') = 3$ and rk~$\Pic(S')^G = 1$. In the latter case, the group is isomorphic to a subgroup of~$\mumu^3_3$.

%The first goal of this paper is to prove the following theorem, which generalizes the results of Theorems \ref{Shr2} and \ref{Shr3}. 

One natural next question is the following: ``Can all possible finite groups act regularly and faithfully on one fixed Severi--Brauer variety?''. Unfortunately, Theorem~\ref{consistancy} shows that many groups cannot act on a Severi--Brauer variety simultaneously, even though each of them may act individually on some different Severi--Brauer varieties. In other words, there are compatibility conditions that prevent some groups from acting on the same Severi--Brauer variety.

%Let us fix a prime number $q \geqslant 3$. Let $\frak{I}_q$ be the set of all integers that are divisible only by primes congruent to~$1$ modulo~$q$. Let $\frak{X}$ be the set of all balanced maps (see Definition~\ref{bal})~${\mumu_q \to \mumu^*_n}$, where $n \in \frak{I}_q$. 

%\begin{defn}
%  Let~$q \geqslant 3$ be a prime number. Let $\frak{J} \subset \frak{I}_q$ and let $\frak{D} = {\{(\Delta_n, \rho_n)\}_{n \in \frak{J}}}$ be a consistent balanced collection (see Definition~\ref{type}). A non-trivial Severi--Brauer variety~$X$ of dimension~$q-1$ over base field $\mathbb{K} \supset \mathbb{Q}$ is a {\sf $\frak{D}$-th example} if~$\Aut(X)$ contains all groups~${\mumu_q \times (\mumu_n \rtimes_{\rho_n} \mumu_q)}$, such that $n \in \frak{J}$ and $(\im(\rho_n),\rho_n) \in \frak{D}$. A $\frak{D}$-th example is called {\sf extremal} if $\frak{J} = \frak{I}_q$
%\end{defn}

We will call a set of groups that agree with each other in a certain sense a consistent balanced collection (see Definition \ref{hastypelong}). The first theorem, which we are going to prove, claims that for any consistent balanced collection of finite groups, there is an example of a non-trivial Severi--Brauer variety such that all groups from that collection act on the variety. Moreover, in the case of dimension $2$, its group of birational automorphisms contains $\mumu^3_3$.
   
\begin{thm}
    \label{example}
     Let~$q \geqslant 3$ be a prime number. Let $\frak{A} = \{\mumu_q \times (\mumu_n \rtimes_{\chi} \mumu_q)\}$ be a consistent balanced collection. Then there exists a field~$\Bbbk \subset \overline{\mathbb{Q}}$ and a non-trivial Severi~--~Brauer variety~$X$ of dimension~$q - 1$ over~$\mathbb{K} = \Bbbk(t)$, where $t$ is a transcendental variable, such that~$\Aut(X)$ contains all groups~${\mumu_q \times (\mumu_n \rtimes_{\chi} \mumu_q)} \in \frak{A}$. In particular, for any group $\mumu_q \times (\mumu_n \rtimes_{\chi} \mumu_q)$, where~$\mumu_n \rtimes_{\chi} \mumu_q$ is a balanced semidirect product, there is a non-trivial Severi--Brauer variety $X$ such that~${{\mumu_q \times (\mumu_n \rtimes_{\chi} \mumu_q) \subset \Aut(X)}}$. Moreover, if $q = 3$, then~${\mumu^3_3 \subset \Bir(X)}$. 
\end{thm}

The second goal of this paper is to generalize the result of Theorem \ref{example} to the case of positive characteristic. In this case, not all of the finite groups described in Theorem~\ref{Sav} can actually be realized. And even more, in the case of positive characteristic, groups that act on the variety form a special consistent balanced collection, which will be called a Frobenius consistent balanced collection (see Definition \ref{Ftype}). 

\begin{thm}
    \label{maincon}
     Let~$l$ be any prime number, and let~$\mathbb{K}$ be a field of characteristic~$l$. Let~$X$ be a non-trivial Severi--Brauer variety of dimension~$q - 1$ over a field~$\mathbb{K}$, where~$q \geqslant 3$ is prime and~$l \neq q$. Let $$\frak{A} = \{\mumu_n \rtimes_{\chi} \mumu_q \mid \mumu_n \rtimes_{\chi} \mumu_q  \subset \Aut(X)\}$$ and $$D = \{ n \mid n \text{ is not divisible by } q \text{ and } \mumu_n \subset \Aut(X)\}.$$ Then $\frak{A}$ is a Frobenius consistent balanced collection and the set $D$ is $q$-adically constant with respect to the order of $l$ (see~Definition~\ref{qconst}). In particular, all integers $n \in D$ are not divisible by $l$. And moreover, if $G$ is a finite subgroup of $\Aut(X)$ then either $G$ is a cyclic group or there exist~$n$ and $\chi$ such that $\mumu_n \rtimes_{\chi}\mumu_q \in \frak{A}$ and $G$ is a subgroup of~${\mumu_q \times (\mumu_n \rtimes_{\chi}\mumu_q)}$.
     
     %there exists a positive integer~$k$ and a Frobenius consistent balanced collection $\frak{D}$ (see Definition \ref{Ftype}) with the following property: for any finite subgroup~$G$ of~$\Aut(X)$ there exists a positive integer~$n = \prod^m_{i=1}p^{r_i}_i$ such that~${v_q(\ord_{p_i}(l)) = k}$ for~${1 \leqslant i \leqslant m}$ and~$G$ is isomorphic to a subgroup of~$\mumu_q \times (\mumu_n \rtimes_{\rho_n} \mumu_q)$, where ${(\im(\rho_n),\rho_n) \in \frak{D}}$.
     
     %where the semidirect product is balanced and, moreover, $$\rho_n(x) = x^{l^{q^{k-1}d_{0}d}},$$ where $\ord_n(l) = q^kd_0$ and $1 \leqslant d \leqslant q -1$. 
\end{thm}

Note that if for every $n \in D$ one has a group $\mumu_n \rtimes_{\chi} \mumu_q \in \frak{A}$, then the condition that $D$ is $q$-adically constant with respect to the order of $l$ follows from the condition that $\frak{A}$ is a Frobenius consistent balanced collection.  

In the case when $l = q$, the assertion of Theorem \ref{maincon} becomes in some sense more complicated but in some other sense much easier.

\begin{thm}
\label{mainconp}
Let~$q \geqslant 3$ be a prime number, and let~$\mathbb{K}$ be a field of characteristic~$q$. Let~$X$ be a non-trivial Severi--Brauer variety of dimension~$q - 1$ over a field~$\mathbb{K}$. Let $$\frak{A} = \{\mumu_n \rtimes_{\chi} \mumu_q \mid \mumu_n \rtimes_{\chi} \mumu_q  \subset \Aut(X)\}$$ and $$D = \{ n \mid n \text{ is not divisible by } q \text{ and } \mumu_n \subset \Aut(X)\}.$$ Then $\frak{A}$ is a Frobenius consistent balanced collection and the set $D$ is $q$-adically constant with respect to the order of $q$ (see~Definition~\ref{qconst}). And moreover, if $G$ is a finite subgroup of~$\Aut(X)$ then either $G$ is an abelian group or there exist~$n$ and $\chi$ such that $G$ is isomorphic to $\mumu_n \rtimes_{\chi} \mumu_q$. In the case when $G$ is abelian, $G$ is either isomorphic to some $\mumu_n$ where $n \in D$ or $G$ is isomorphic to $\mumu^N_q$ for some positive $N$.
\end{thm}
 
   \begin{n}
   We should warn the reader that there is no simple example of a non-trivial Severi--Brauer variety $X$ over the base field $\mathbb{K}$ such that $\dim(X) + 1 = \Char{\mathbb{K}}$. Indeed, if~$\mathbb{K}$ is perfect, there is no such non-trivial Severi--Brauer variety. This follows from~\cite[~Theorem~9.2.4]{G-S} and the fact that the module of absolute differentials of a perfect field $\mathbb{K}$ is trivial.
   \end{n}
   
   In particular, Theorems \ref{maincon} and \ref{mainconp} show that there is no example of a non-trivial Severi--Brauer variety over a field of positive characteristic, such that all possible cyclic groups $\mumu_n$ act simultaneously, like in the case of characteristic $0$. But some more subtle examples still exist.
   
   %Let us fix a prime number $q \geqslant 3$, prime number $l \neq q$ and a positive integer $k$. Let~$\frak{I}_{k,q,l}$ be the set of all integers that are divisible only by primes $p$ congruent to~$1$ modulo~$q$, such that ${v_q(\ord_{p}(l)) = k}$.
   
%    \begin{defn}
   %     Let~$l$ be any prime number. Let~$q \geqslant 3$ be a prime number and~${l \neq q}$. Let ${\frak{K} \subset \{(n,\chi) \in \frak{I}_q \times \frak{X} \mid\chi : \; \mumu_q \to \mumu^*_n\}}$ and let ${\frak{A} = \{\mumu_q \times (\mumu_n \rtimes_{\chi} \mumu_q)\}_{(n, \chi) \in \frak{K}}}$ be a collection of groups which form a consistent balanced collection (see Definition~\ref{hastype} which also is Frobenius (see Definition \ref{Ftype}).  A non-trivial Severi--Brauer variety $X$ of dimension~$q-1$ over base field $\mathbb{K} \supset \mathbb{F}_l$ is a {\sf $(k,\frak{D})$-th example} if~$\Aut(X)$ contains all groups~${\mumu_q \times (\mumu_n \rtimes_{\rho_n} \mumu_q)}$, such that $n \in \frak{J}$ and $(\im(\rho_n),\rho_n) \in \frak{D}$. A $(k,\frak{D})$-th example is called {\sf extremal} if $\frak{J} = \frak{I}_{k,q,l}$

  %   \end{defn}
  
   Our last theorem states that if groups in any collection of finite groups form a consistent balanced Frobenius collection, then there is an example of a non-trivial Severi--Brauer variety such that all groups from that collection act on the variety.

\begin{thm}
    \label{main}
 Let~$l$ be any prime number. Let~$q \geqslant 3$ be a prime number and~${l \neq q}$. Let~${\frak{A} = \{\mumu_q \times (\mumu_n \rtimes_{\chi} \mumu_q)\}}$ be a Frobenius consistent balanced collection. Then there exists a field~$\Bbbk \subset \overline{\mathbb{F}_l}$ and a non-trivial Severi--Brauer variety~$X$ of dimension~$q-1$ over~$\mathbb{K} = \Bbbk(t)$, where $t$ is a transcendental variable, such that $\Aut(X)$ contains all groups~${\mumu_q \times (\mumu_n \rtimes_{\chi} \mumu_q)} \in \frak{A}$.
 \end{thm}
 
 In the case when $l = q$, there are basically the same examples as in the case $l \neq q$, but additionally, we claim that there is no bound for the integer $N$ in the notation of Theorem~\ref{mainconp}.

\begin{thm}
    \label{mainp}
Let~$q \geqslant 3$ be a prime number. Let~${\frak{A} = \{\mumu_n \rtimes_{\chi} \mumu_q\}}$ be a Frobenius consistent balanced collection. Then there exists a field~$\Bbbk \subset \overline{\mathbb{F}_q}$ and a non-trivial Severi~--~Brauer variety~$X$ of dimension~$q-1$ over~$\mathbb{K} = \Bbbk(t)$, where $t$ is a transcendental variable, such that $\Aut(X)$ contains all groups~$\mumu_n \rtimes_{\chi} \mumu_q \in \frak{A}$ and for any positive integer $N$, the group~$\mumu_q^N$ also acts on $X$.
 \end{thm}

The theorems above consider only cases where the dimension of varieties is at least 2. The following example shows there is no way to get something similar, even to Theorem~\ref{Sav}, in the case of Severi--Brauer curves.

\begin{ex}
Consider a conic $C$ given by the equation~${x^2 + y^2 + z^2 = 0}$ on the real projective plane. Since it is a conic without real points, it is a non-trivial Severi--Brauer curve. Notice that $\Aut(C) \supset \SO(3,\mathbb{R)}$. For any integer~$n$, one has a subgroup of~$\Aut(C)$, which is isomorphic to $\mumu_n$. And even more, there is a subgroup of~$\Aut(C)$ isomorphic to the icosahedral group $\mathfrak{A}_5$, which cannot be represented as a semidirect product.
\end{ex}

One may wonder whether there are infinite sets of prime numbers $p$ such that groups~$\mumu_p$ act on the Severi--Brauer varieties described in Theorem \ref{main}. We will give a positive answer in Proposition \ref{infin}.

There is a question that still requires an answer. In Theorems \ref{example}, \ref{main} and \ref{mainp}, one has~$\tr\mathbb{K} = 1$ over~$\mathbb{Q}$ in the first case, and over~$\mathbb{F}_l$ in the second case, where $\mathbb{K}$ is the field over which the example of a Severi--Brauer variety was constructed. So, one may wonder whether there is only a finite set of finite groups which act on a non-trivial Severi--Brauer variety $X$ over the base field~$\mathbb{K}$, such that~$\tr_{\mathbb{L}}(\mathbb{K}) = 0$, where $\mathbb{L}$ is either~$\mathbb{Q}$ or~$\mathbb{F}_{l}$? Proposition \ref{trfin} answers this question negatively in positive characteristic, but still, there is the following question:

\begin{q}

Let $X$ be a non-trivial Severi--Brauer variety over a field $\mathbb{K}$, such that~$\mathbb{K} \subset \overline{\mathbb{Q}}$. Is it true that there is only a finite set of finite groups that act on $X$?

\end{q}

The structure of the paper is as follows: in Section \ref{prel}, some preliminaries and general lemmas, mostly about field extensions, are given; then in Section \ref{balance}, balanced semidirect products are discussed. In Section~\ref{char0}, Theorem~\ref{example} is proven, i.e., examples over fields of characteristic $0$ are constructed. In Sections~\ref{criter} and~\ref{exchar}, we focus on the case where the base field has positive characteristic. First, in Section \ref{criter}, we provide more restrictions on finite subgroups; we also provide an improved version of Theorem~\ref{Sav}, and second, in Section~\ref{exchar}, examples over the base field of positive characteristic are constructed. In Section~\ref{com}, some additional remarks are provided. In Appendix~\ref{Ap}, we show that if $G$ is a finite group acting on a Severi–Brauer variety $X$, then $X$ cannot be~$G$-birationally rigid.

\smallskip
\textbf{Acknowledgements.}
I would like to express my sincere gratitude to Constantin Shramov for proposing the problem, for numerous indispensable discussions, and for his patience during the development of this text. I am thankful to Vladislav Levashev, Sergey Gorchinskiy, and Magnus Ridder Olsen for several helpful discussions and explanations. I am grateful to Anna Savelyeva for her careful reading and many helpful suggestions. I am also very grateful to Andrey Trepalin and Artem Avilov for their valuable comments on a previous version of this paper. Additionally, I thank Sergey Tikhonov for his careful reading and helpful comments on the last version of the text.

This work was performed at the Steklov International Mathematical Center and partially supported by the Ministry of Science and Higher Education of the Russian Federation (agreement no. 075-15-2025-303) and by the Theoretical Physics and Mathematics Advancement Foundation «BASIS».
%This work was performed at the Steklov International Mathematical Center and supported by the Ministry of Science and Higher Education of the Russian Federation (agreement no. 075-15-2025-303).

\section{Preliminaries}
\label{prel}
Let~$A$ be an algebra, and denote by~$A^*$ its group of units. The automorphism group of a Severi–Brauer variety admits the following classical description.
 
\begin{lm}[{see e.g. \cite[Theorem E on page 266]{Chat} or \cite[Lemma 4.1]{ShV}}]
    \label{Aut}
    Let~$X$ be a Severi--Brauer variety over a field~$\Bbbk$ corresponding to a central simple algebra~$A$. Then~${\Aut(X) \cong A^* / \Bbbk^*}$.

\end{lm}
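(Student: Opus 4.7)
The plan is to use Galois descent, treating both sides of the claimed isomorphism as $\Bbbk$-points of group schemes that are twisted forms of $\PGL_{n+1}$, where $n = \dim X$. Over the algebraic closure $\overline{\Bbbk}$, the variety $X$ becomes $\mathbb{P}^n_{\overline{\Bbbk}}$ with automorphism group $\PGL_{n+1}(\overline{\Bbbk})$, while $A$ becomes $\Mat_{n+1}(\overline{\Bbbk})$, whose algebra automorphism group is also $\PGL_{n+1}(\overline{\Bbbk})$ by the Skolem--Noether theorem. So both $X$ and $A$ are classified by the Galois cohomology set $H^1(\Gal(\overline{\Bbbk}/\Bbbk), \PGL_{n+1})$.

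First, I would consider the $\Bbbk$-group scheme $\mathcal{G} := A^*/\mathbb{G}_m$, defined as the sheaf quotient of the unit group scheme of $A$ by the central embedding of $\mathbb{G}_m$. Since $A$ is obtained from $\Mat_{n+1}$ by twisting with a Galois cocycle valued in $\PGL_{n+1}$, the group scheme $\mathcal{G}$ is the inner twist of $\PGL_{n+1}$ by the same cocycle. On the other hand, the automorphism group scheme of $X$ is likewise the inner twist of $\PGL_{n+1}$ by the cocycle classifying $X$. The correspondence between Severi--Brauer varieties and central simple algebras (the one recalled in the introduction) is precisely the statement that these two cocycles coincide, so the two group schemes are canonically $\Bbbk$-isomorphic.

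Second, I would pass to $\Bbbk$-points. Applying Galois cohomology to the short exact sequence
\[
1 \to \mathbb{G}_m \to A^* \to \mathcal{G} \to 1
\]
and invoking Hilbert's Theorem 90 in the form $H^1(\Bbbk, \mathbb{G}_m) = 0$ gives $\mathcal{G}(\Bbbk) = A^*(\Bbbk)/\Bbbk^*$. Combined with the identification of $\Aut(X)$ with $\mathcal{G}(\Bbbk)$ from the previous step, this yields the desired isomorphism $\Aut(X) \cong A^*(\Bbbk)/\Bbbk^*$.

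The main point requiring care is the first step: one must verify that, under the dictionary between non-trivial Severi--Brauer varieties and central simple algebras, the $\PGL_{n+1}$-torsors arising on the two sides are genuinely the same cocycle class, not merely abstractly isomorphic twists. This compatibility is the substance of Ch\^{a}telet's classical argument and is the reason the lemma is stated with a reference rather than reproved from scratch; I would import it from the cited sources rather than redo the bookkeeping here.
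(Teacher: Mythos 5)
The paper offers no proof of this lemma at all --- it is imported verbatim from Ch\^{a}telet and from \cite[Lemma 4.1]{ShV} --- and your Galois-descent argument is precisely the standard one found in those references: identify both $\underline{\Aut}(X)$ and the sheaf quotient $A^*/\mathbb{G}_m$ as the inner twist of $\PGL_{n+1}$ by the common cocycle (over the separable closure, which suffices since Severi--Brauer varieties and central simple algebras split there), and then use Hilbert 90 on $1 \to \mathbb{G}_m \to A^* \to A^*/\mathbb{G}_m \to 1$ to compute the $\Bbbk$-points of the quotient as $A^*(\Bbbk)/\Bbbk^*$. Your argument is correct, and you rightly isolate the only point of substance --- that the two torsors give the same cohomology class rather than merely abstractly isomorphic twists --- which is exactly the content supplied by the cited sources.
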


Let~$L/\Bbbk$ be a Galois extension with the Galois group isomorphic to~$\mumu_n$. Choose an element~$a \in \Bbbk^*$ and a generator~$\sigma$ of~$\Gal(L/\Bbbk)$. Then one can associate with~$\sigma$ and~$a$ a cyclic algebra~$A$, which is a central simple algebra over~$\Bbbk$ of degree~$n$, see \cite[2.5]{G-S} or~\cite[Exercise~3.1.6]{GSh}. Explicitly,~$A$ is generated over~$\Bbbk$ by~$L$ and an element~$\alpha$ subject to relations~$\alpha^n = a$ and
$$\lambda \alpha =\alpha \sigma(\lambda), \lambda \in L.$$

One denotes this algebra by~$A = (L/\Bbbk, \sigma, a)$.

\begin{lm} [{see e.g. \cite[Exercise 3.1.6(i)]{GSh}}]
    \label{nm}
    Suppose that in the above notation the element~$a$ is not contained in the image of the Galois norm~$\Nm_{L/\Bbbk}$. Then~$A$ is not isomorphic to a matrix algebra.
\end{lm}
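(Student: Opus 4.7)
The plan is to prove the contrapositive: if $A = (L/\Bbbk, \sigma, a)$ is isomorphic to a matrix algebra $\Mat_n(\Bbbk)$, then $a \in \Nm_{L/\Bbbk}(L^*)$. First, identify $A$ with $\operatorname{End}_\Bbbk(V)$ for some $n$-dimensional $\Bbbk$-vector space $V$. The subalgebra $L \hookrightarrow A$ turns $V$ into an $L$-module; since $\dim_\Bbbk V = n = [L : \Bbbk]$, $V$ is necessarily one-dimensional over $L$. Choosing any nonzero vector identifies $V$ with $L$ in such a way that an element $\lambda \in L$ acts as left multiplication by $\lambda$.

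Under this identification, $\alpha$ acts on $L$ as some $\Bbbk$-linear operator $T \colon L \to L$. Reading the relation $\lambda \alpha = \alpha \sigma(\lambda)$ as an identity in $\operatorname{End}_\Bbbk(L)$ and evaluating on $v \in L$ gives $\lambda \cdot T(v) = T(\sigma(\lambda) v)$; substituting $\mu = \sigma(\lambda)$ yields $T(\mu v) = \sigma^{-1}(\mu) T(v)$, so $T$ is $\sigma^{-1}$-semilinear. Setting $u := T(1) \in L$, this means $T(v) = \sigma^{-1}(v) \cdot u$ for every $v \in L$. Note that $u \neq 0$, because $\alpha$ is invertible in $A$ (since $\alpha^n = a \in \Bbbk^*$), hence $T$ is invertible.

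A short induction on $k$ then shows $T^k(1) = \prod_{i=0}^{k-1} \sigma^{-i}(u)$. In particular, $T^n$ is $L$-linear (as $\sigma^{-n} = \id$) and coincides with multiplication by
\[
\prod_{i=0}^{n-1} \sigma^{-i}(u) = \Nm_{L/\Bbbk}(u),
\]
where the last equality uses that $\{\sigma^{-i}\}_{i=0}^{n-1} = \Gal(L/\Bbbk)$. On the other hand, $\alpha^n = a$ forces $T^n = a \cdot \id_L$, so $a = \Nm_{L/\Bbbk}(u)$, contradicting the hypothesis on $a$. The only point requiring care is bookkeeping: one must fix the convention that the action of $A$ on $V$ composes on the left, which is what makes the commutation relation come out as $\sigma^{-1}$-semilinearity rather than $\sigma$-semilinearity. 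Beyond that, the argument is a direct computation and does not invoke any Brauer-group machinery.
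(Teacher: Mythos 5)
Your argument is correct and complete. Note that the paper does not prove this lemma at all --- it simply cites \cite[Exercise 3.1.6(i)]{GSh} --- so there is no internal proof to compare against; what you have supplied is a valid self-contained replacement. Checking the details: the identification of $V$ with $L$ as a one-dimensional $L$-vector space is forced by the dimension count $\dim_\Bbbk V = n = [L:\Bbbk]$; the computation that the relation $\lambda\alpha = \alpha\sigma(\lambda)$ makes $T$ into a $\sigma^{-1}$-semilinear operator is right (given the stated left-composition convention); the induction $T^k(1) = \prod_{i=0}^{k-1}\sigma^{-i}(u)$ checks out, and since $\{\sigma^{-i}\}_{i=0}^{n-1}$ exhausts $\Gal(L/\Bbbk)$, the conclusion $a = \Nm_{L/\Bbbk}(u)$ follows from $\alpha^n = a$. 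The only implicit step worth making explicit is that a matrix algebra over $\Bbbk$ isomorphic to $A$ must be $\Mat_n(\Bbbk)$ by comparing dimensions, which is immediate. Compared with the usual textbook route --- identifying the class of $(L/\Bbbk,\sigma,a)$ in the Brauer group with the image of $a$ under $\Bbbk^*/\Nm_{L/\Bbbk}(L^*) \cong H^2(\Gal(L/\Bbbk), L^*)$ for a cyclic group --- your proof is more elementary: it proves exactly the implication needed (split implies $a$ is a norm) by a direct semilinear-algebra computation, at the cost of not yielding the converse or the group structure that the cohomological description provides.
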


Let us mention some facts about fields extensions. 

\begin{thm}[{see e.g. \cite[Chapter 6, Theorem 9.1]{Leng}}]
     \label{Leng9}
    Let~$\Bbbk$ be a field,~$n \geqslant 2$ is an integer number,~$a \in \Bbbk$ and~$a \neq 0$. Suppose that~$a \notin \Bbbk^p$ for all prime numbers~$p$ which divide~$n$, and~$a \notin -4\Bbbk^4$ if~$n$ is divisible by~$4$. Then the polynomial~$T^n - a$ is irreducible in~$\Bbbk[T]$.
\end{thm}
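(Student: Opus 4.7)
The plan is to prove this via the classical Vahlen--Capelli argument, by induction on $n$, reducing first to the case where $n$ is a prime power and then examining the constant terms of hypothetical factors. Throughout, I would fix a root $\alpha$ of $X^n - a$ in an algebraic closure and aim to compute $[\Bbbk(\alpha):\Bbbk]$ by climbing a tower of degree-$p$ extensions, one for each prime factor of $n$.

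First I would treat the prime case $n = p$. Factor $X^p - a = \prod_{i=0}^{p-1}(X - \zeta^i \alpha)$ over $\overline{\Bbbk}$, where $\zeta$ is a primitive $p$-th root of unity. If $f \in \Bbbk[X]$ is a monic factor of degree $d$ with $1 \leq d < p$, its constant term is $\pm \zeta^j \alpha^d \in \Bbbk$ for some $j$. Raising to the $p$-th power gives $\pm a^d \in \Bbbk^p$, and a B\'ezout identity $ud + vp = 1$ combined with the sign bookkeeping (trivial for $p$ odd, easy for $p = 2$) then forces $a \in \Bbbk^p$, contradicting the hypothesis.

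Next, for odd prime powers $n = p^r$, I would argue by induction on $r$. Setting $\beta = \alpha$ with $\beta^{p^r} = a$, the prime case yields $[\Bbbk(\beta^{p^{r-1}}):\Bbbk] = p$, and at each higher floor one observes that $\beta^{p^{r-i-1}}$ satisfies $X^p - \beta^{p^{r-i}}$ over $\Bbbk(\beta^{p^{r-i}})$; to keep this extension of degree $p$ it suffices (by the prime case again) to show $\beta^{p^{r-i}}$ is not a $p$-th power there, which one gets by taking the norm down to $\Bbbk$ and using that $a$ itself is not a $p$-th power. For the general $n = \prod p_i^{r_i}$, the fields $\Bbbk(\alpha^{n/p_i^{r_i}})$ have pairwise coprime prime-power degrees, so their compositum $\Bbbk(\alpha)$ has degree $n$ over $\Bbbk$, and $X^n - a$ is irreducible.

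The main obstacle will be the $2$-power case $n = 2^r$ with $r \geq 2$, which requires the mysterious $-4\Bbbk^4$ exception. The identity $X^4 + 4c^4 = (X^2 - 2cX + 2c^2)(X^2 + 2cX + 2c^2)$ shows that $X^4 - a$ can split into two quadratics even when $a \notin \Bbbk^2$, precisely when $a = -4c^4$ for some $c \in \Bbbk$. Excluding this, the tower argument from the odd case carries over verbatim: the bottom floor $\Bbbk(\sqrt{a})$ is of degree $2$ because $a \notin \Bbbk^2$, the next floor $\Bbbk(\sqrt[4]{a})$ is of degree $2$ because $a \notin -4\Bbbk^4$ kills the only obstruction, and above that one again reduces inductively to the prime case applied in the extension, noting that any new obstruction of type $-4c^4$ would pull back to one over $\Bbbk$. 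Assembling the odd and even prime-power contributions then yields irreducibility of $X^n - a$ in full generality.
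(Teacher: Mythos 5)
The paper does not actually prove this statement: it is imported from Lang's \emph{Algebra} (Chapter VI, Theorem 9.1) and used as a black box in Lemmas \ref{leng} and \ref{n}, so there is no in-paper proof to compare against. What you have written is a reconstruction of the standard Vahlen--Capelli argument, which is the proof Lang gives. Your reduction to prime powers via the compositum of the fields $\Bbbk(\alpha^{n/p_i^{r_i}})$ (whose degrees are pairwise coprime, forcing $[\Bbbk(\alpha):\Bbbk]=n$), the constant-term argument for $n=p$, and the norm-based induction for odd prime powers are all correct as stated: for $p$ odd the norm of $A=\alpha^{p^{r-1}}$ down to $\Bbbk$ is $(-1)^{p+1}a=a$, so $A\in\Bbbk(A)^p$ would force $a\in\Bbbk^p$.

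The one step that is genuinely thinner than the argument requires is the claim that for $n=2^r$ with $r\ge 3$ ``the tower argument from the odd case carries over verbatim'' once $a\notin-4\Bbbk^4$ is imposed. It does not: for $p=2$ the norm of $\sqrt{a}$ down to $\Bbbk$ is $-a$, so the norm trick only yields $-a\in\Bbbk^2$, which contradicts nothing. The correct substitute at the bottom floor is the explicit computation $\sqrt{a}=(c+d\sqrt{a})^2\Rightarrow c^2+ad^2=0,\ 2cd=1\Rightarrow a=-4(2d)^{-4}$, which you essentially have via the quartic identity; but your assertion that ``any new obstruction of type $-4c^4$ would pull back to one over $\Bbbk$'' is precisely the nontrivial content of the sub-lemmas Lang proves for the higher floors (one must separately rule out $\sqrt{a}\in-4\,\Bbbk(\sqrt{a})^4$, showing that it would force $a\in\Bbbk^2$ or $a\in-4\Bbbk^4$). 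Since the statement is a cited textbook theorem, this gap is immaterial for the paper, but if your sketch is meant to be self-contained, that is the step to write out in full.
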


\begin{lm}
     \label{leng}
    Let~$L \supset \Bbbk$ be a field extension such that~$\left[L : \Bbbk\right] = q$ is a prime number. Let~$\theta \in L$. Let~$p \neq q$ be a prime number and~$\theta^p \in \Bbbk$. Then~$\theta^p \in \Bbbk^p$.
\end{lm}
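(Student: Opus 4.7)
The plan is to analyze the minimal polynomial of $\theta$ over $\Bbbk$. First, I would dispose of the trivial case $\theta \in \Bbbk$, where $\theta^p \in \Bbbk^p$ is immediate. So assume $\theta \notin \Bbbk$. Since $[L:\Bbbk] = q$ is prime, the tower $\Bbbk \subsetneq \Bbbk(\theta) \subseteq L$ forces $\Bbbk(\theta) = L$, hence $[\Bbbk(\theta):\Bbbk] = q$.

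Next, I would argue by contradiction: set $a = \theta^p \in \Bbbk$ and suppose $a \notin \Bbbk^p$. I would then apply Theorem~\ref{Leng9} with exponent $n = p$. Since $p$ is prime, the only prime divisor of $n$ is $p$ itself, so the first condition of Theorem~\ref{Leng9} becomes exactly $a \notin \Bbbk^p$. The second condition concerning $-4\Bbbk^4$ is vacuous, because $4$ does not divide any prime. Hence $X^p - a$ is irreducible in $\Bbbk[X]$, so it is (up to scalar) the minimal polynomial of $\theta$. This yields $[\Bbbk(\theta):\Bbbk] = p$, which combined with the previous paragraph gives $p = q$, contradicting the hypothesis $p \neq q$. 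Therefore $a \in \Bbbk^p$.

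The argument is essentially mechanical once one has Theorem~\ref{Leng9} in hand; the only point requiring a brief check is that the $-4\Bbbk^4$ clause of that theorem does not enter, which is automatic from $p$ being prime. I do not foresee any genuine obstacle.
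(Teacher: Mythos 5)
Your proof is correct and follows essentially the same route as the paper: both invoke Theorem~\ref{Leng9} with $n=p$ to conclude that $X^p-\theta^p$ would be irreducible if $\theta^p\notin\Bbbk^p$, forcing $[\Bbbk(\theta):\Bbbk]=p$, which is incompatible with the tower $\Bbbk\subseteq\Bbbk(\theta)\subseteq L$ of prime degree $q\neq p$. Your explicit checks (the case $\theta\in\Bbbk$, the vacuity of the $-4\Bbbk^4$ clause) are minor refinements the paper leaves implicit.
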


\begin{proof}
    By Theorem \ref{Leng9} either~$\theta^p \in \Bbbk^p$ or~$x^p - \theta^p$ is irreducible over~$\Bbbk$. In the latter case one has~$\left[\Bbbk(\theta) : \Bbbk\right] = p$ and~$\Bbbk \subset \Bbbk(\theta) \subset L$, so~$q$ is divisible by~$p$, a contradiction.
\end{proof}

\begin{lm}
    \label{n}
     Let~$L \supset \Bbbk$ be a field extension such that~$\left[L : \Bbbk\right] = q$ is a prime number. Let~$n = pm$ be some integer, where~$p$ is some prime number such that~$q \neq p$. Let~$\theta \in L$ and~$\theta^n \in \Bbbk$. Then~$\theta^n \in \Bbbk^{p}$.
\end{lm}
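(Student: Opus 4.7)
The plan is to reduce this directly to Lemma \ref{leng}. I would set $\eta = \theta^m \in L$. Then $\eta^p = \theta^{pm} = \theta^n \in \Bbbk$ by hypothesis, so $\eta \in L$ is an element whose $p$-th power lies in $\Bbbk$. The extension $L/\Bbbk$ has prime degree $q$ and $p \neq q$ is prime, so the hypotheses of Lemma \ref{leng} are met for $\eta$. Applying Lemma \ref{leng} yields $\eta^p \in \Bbbk^p$, which is exactly the desired conclusion $\theta^n \in \Bbbk^p$.

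The one thing worth double-checking is that the previous lemma is applicable: Lemma \ref{leng} is stated for an element $\theta$ implicitly assumed to lie in $L$ (its proof considers $\Bbbk(\theta) \subseteq L$), and here $\eta = \theta^m$ lies in $L$ because $L$ is a field and $\theta \in L$. So no obstacle arises — the proof is essentially a one-line substitution, and there is no hard step.
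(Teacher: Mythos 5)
Your proof is correct and is essentially the same argument as the paper's: the paper also works with $\theta^m$, whose $p$-th power is $\theta^n$, and derives the contradiction $[\Bbbk(\theta^m):\Bbbk]=p$ directly from Theorem \ref{Leng9} rather than by citing Lemma \ref{leng}. Your reduction to Lemma \ref{leng} (with the check that $\theta^m \in L$) is a clean and valid shortcut.
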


\begin{proof}
   By Lemma \ref{leng} applied to $\theta^m$ one immediately gets that $\theta^{n} \in \Bbbk^{p}$. 
   % Let~$\theta^n \notin \Bbbk^{p}$, then by Theorem \ref{Leng9} polynomial~$x^{p} - \theta^n$ is irreducible, so~$$\left[\Bbbk(\theta^{m}):\Bbbk\right] = p$$ and~$\Bbbk \subsetneq \Bbbk(\theta^{m}) \subset L$. This provides a contradiction, because~$q$ is not divisible by~$p$. 
\end{proof}

\begin{lm}
\label{p=l}
Let $L \supset \Bbbk$ be a field extension such that~$\left[L : \Bbbk\right] = q$ is a prime number. Let~$\Char(\Bbbk) = l > 0$ and $q \neq l$. Let~$\theta \in L$ such that~$\theta^l \in \Bbbk$. Then~$\theta \in \Bbbk$.
\end{lm}

\begin{proof}
Denote $\theta^l = \alpha \in \Bbbk$. 

If $\alpha \in \Bbbk^l$, then $$\theta^l - \alpha = \theta^l - \alpha_0^l = (\theta - \alpha_0)^l,$$ for some $\alpha_0 \in \Bbbk$. Therefore $\theta = \alpha_0 \in \Bbbk$.

If $\alpha \notin \Bbbk^l$, then by Theorem \ref{Leng9} a polynomial $T^l - \alpha$ is irreducible. Then $[\Bbbk(\theta):\Bbbk] = l$, but $$q = \left[L : \Bbbk\right] = \left[L : \Bbbk(\theta)\right] \cdot \left[\Bbbk(\theta) : \Bbbk\right] = l \cdot  \left[L : \Bbbk(\theta)\right],$$ which is impossible since $q \neq l$. 
\end{proof}

\begin{lm}
\label{root}
Let $L \supset \Bbbk$ be a field extension such that~$\left[L : \Bbbk\right] = q$ is a prime number. Let~$n = \prod^m_{i=1}p^{r_i}_i$ be some integer, where~$p_i$ are some prime numbers such that~$q \neq p_i$. Let~$\theta \in L$ such that~$\theta^n \in \Bbbk$ and $\theta^s \notin \Bbbk$ for all $1 \leqslant s < n$. Then~$\theta^n \in \Bbbk^{n}$. 
\end{lm}

\begin{proof}
Note that by Lemma \ref{p=l} applied to $\theta^{\frac{n}{p_i}}$ one has that $p_i \neq \Char(\Bbbk)$ for all $1 \leqslant i \leqslant m$. 

Firstly, let us prove that $\theta^n \in \Bbbk^{p^{r_i}_i}$ for all $1 \leqslant i \leqslant m$. Denote $$t_{p_i} = \frac{\prod\limits^m_{j=1}p^{r_j}_j}{p^{r_i}_i},$$ and $$\theta_{p_i} = \theta^{t_{p_i}}.$$

Suppose that $\theta^n \in \Bbbk^{p_i^s}$, but $\theta^n \in \Bbbk^{p_i^{s+1}}$ for some integer $s$ such that $0 \leqslant s < r_i$. Then there is $\alpha \in \Bbbk$ such that $\theta^n = \alpha^{p_i^s}$ and $\alpha \notin \Bbbk^{p_i}$. Then $$\left(\frac{\theta^{p_i^{r_i - s}}_{p_i}}{\alpha}\right)^{p^s_i} - 1 = 0.$$ Therefore there is $\xi_{p_i^t}$ such that $$\theta^{p_i^{r_i - s}}_{p_i} = \xi_{p_i^t}\alpha,$$ where $\xi_{p_i^t}$ is a primitive root of unity of degree $p^{t}_i$ and $1\leqslant t \leqslant s$. 

Since $\theta^{p_i^{r_i - s}}_{p_i} \notin \Bbbk$, then $\xi_{p_i^t} \notin \Bbbk$ and therefore there is no primitive root of unity of degree~$p^{t}_i$ in $\Bbbk$. But all primitive roots of unity of degree $p^{t}_i$ lie in $L$.

Consider a polynomial $f(T) = T^{p^t_i} - \alpha \in L[T]$. Since $p_i \neq l$ this polynomial is separable and since $\alpha \notin \Bbbk^{p_i}$ it is an irreducible polynomial. Suppose $L(y)$ is a splitting field of the polynomial $f(T)$, where $y$ is a root of the polynomial $f(T)$. Since $f(T)$ is separable then all roots of $f(T)$ are of the form $\xi y$, where $\xi$ is a root of unity of degree $p_i^t$. Since there are $p_i^t$ different roots of $f(T)$, all roots of unity, including primitive ones, are met as $\xi$. Consider a splitting field of the polynomial $f(T)$ taken as a polynomial with coefficients in $\Bbbk$. This field is $\Bbbk(y)$. Both $L$ and $\Bbbk(y)$ lie inside $L(y)$, so we can consider $F = L \cap \Bbbk(y)$. Note that $\Bbbk \subset F$ and also all roots of unity of degree $p^t_i$ lie in $F$. Since $\xi_{p_i^t} \notin \Bbbk$ we have that $\Bbbk \subsetneq F$. Since $\left[L : \Bbbk\right] = q$, then~$F = L$. But $\left[\Bbbk(y) : \Bbbk\right] = p_i^t$ and $q \neq p_i$, so $L \nsubseteq \Bbbk(y)$, a contradiction since $F$ must be a subset of $\Bbbk(y)$.

%Suppose $\theta^n = \alpha_{0,p_i} \in \Bbbk$. Applying Lemma \ref{leng} to the element $\theta^{\frac{n}{p}}$ one gets that there exists~$\alpha_{1,p_i} \in \Bbbk$ such that $\alpha_{0,p_i} = \alpha_{1,p_i}^{p_i}$. Then $$\left(\theta^{t_p}\right)^{p^{r_i}_i} - \alpha_{0,p_i}= \left(\theta^{t_p}\right)^{p^{r_i}_i} - \alpha_{1,p_i}^{p_i} = 0,$$ and so $$\left(\frac{\left(\theta^{t_p}\right)^{p^{r_i - 1}_i}}{\alpha_{1,p_i}} \right)^{p_i} - 1 = 0.$$ Therefore $\left(\theta^{t_p}\right)^{p^{r_i - 1}_i} = \xi_{p_i}\alpha_{1,p_i}$, where $\xi_{p_i}$ is a root of unity of degree $p_i$. Nu applying again Lemma \ref{leng} to the element $\left(\theta^{t_p}\right)^{p^{r_i - 1}_i}$  one gets that $\xi_{p_i}\alpha_{1,p_i} = \alpha_{2,p_i}^{p_i}$, where $\alpha_{2,p_i} \in \Bbbk$. One can continues.

Therefore one gets that there is $\alpha_{p_i}$ such that $$\left(\frac{\theta^{t_{p_i}}}{\alpha_{p_i}}\right)^{p_i^{r_i}} - 1 = 0,$$ therefore $$\frac{\theta^{t_{p_i}}}{\alpha_{r_i,p_i}} = \xi_{p^{r_i}_i},$$ where $\xi_{p_i^{r_i}}$ is a root of unity of degree $p^{r_i}_i$. By multiplying all equations, one has that 
 \begin{equation}
\frac{\theta^{\sum\limits^m_{i = 1}t_{p_i}}}{\prod\limits^{m}_{i = 1}\alpha_{p_i}} = \xi_{n}.
\label{roo}
\end{equation}
 Since $\sum^m_{i = 1}t_{p_i}$ is not divisible by any $p_i$ for $1 \leqslant i \leqslant m$, one can take a suitable power of equation \eqref{roo} and get that $\theta = C \xi_n$ for some $C \in \Bbbk$ and so $\theta^n \in \Bbbk^{n}$. 
\end{proof}

\begin{thm}[{Natural Irrationalities, see e.g. \cite[Chapter 6, Theorem 1.12]{Leng}}]
    \label{NI}
    Let~$K/F$ be a finite degree Galois extension and let~$L/F$ be an arbitrary field extension, such that there is a field $U \supset K,L$. Then:
    \begin{enumerate}
        \item [(a)] The field extension~$KL/L$ is Galois.
        \item [(b)] The restriction map~$r: \Aut(KL/L) \to \Aut (K/ K \cap L)$ is an isomorphism.
        \item [(c)] One has~$\left[KL:L\right] = \left[K:K \cap L\right]$.
    \end{enumerate}
\end{thm}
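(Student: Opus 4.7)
The plan is to handle the three assertions in order, with (c) falling out essentially for free from (b).

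For part (a), I would exploit the characterization of finite Galois extensions as splitting fields of separable polynomials. Since $K/F$ is finite Galois, one can write $K$ as the splitting field over $F$ of some separable polynomial $f \in F[x]$. Viewing $f$ as an element of $L[x]$, it remains separable, and $KL$ is generated over $L$ by the same set of roots of $f$; hence $KL$ is the splitting field of $f$ over $L$, so $KL/L$ is Galois.

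For part (b), the first step is to verify that the restriction map $r$ is well defined: any $\sigma \in \Aut(KL/L)$ permutes the roots of $f$ and thus sends $K$ to $K$, so $\sigma|_K$ makes sense, lies in $\Aut(K/F)$, and fixes $K \cap L$ pointwise since $\sigma$ fixes $L$. Injectivity is immediate, because an element fixing both $K$ and $L$ pointwise fixes their compositum $KL$ pointwise. The main obstacle — and the real content of the theorem — is surjectivity of $r$, which I would establish via the Galois correspondence. Note first that $K/K\cap L$ is Galois as an intermediate extension of $K/F$. Let $H = \Ime(r) \subseteq \Aut(K/K \cap L)$ and let $M$ be the fixed field of $H$ inside $K$; the inclusion $K \cap L \subseteq M$ is clear. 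Conversely, if $\alpha \in M$, then $\alpha$ lies in $K$ and is fixed by $\sigma|_K$ for every $\sigma \in \Aut(KL/L)$, hence by every $\sigma \in \Aut(KL/L)$ itself. Since $KL/L$ is Galois by part (a), this forces $\alpha \in L$, so $\alpha \in K \cap L$. Therefore $M = K \cap L$, and the Galois correspondence for $K/(K\cap L)$ yields $H = \Aut(K/K\cap L)$.

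Part (c) is then a direct count: both $KL/L$ and $K/(K \cap L)$ are finite Galois, so the order of each Galois group equals the corresponding degree, and the isomorphism from (b) gives $[KL:L] = [K:K\cap L]$.
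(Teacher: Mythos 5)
Your proof is correct and is the standard argument for Natural Irrationalities; the paper itself gives no proof, citing Lang's \emph{Algebra} instead, and your argument (splitting-field characterization for (a), injectivity via the compositum and surjectivity via comparing fixed fields through the Galois correspondence for (b), then counting degrees for (c)) is essentially the proof found in that reference. No gaps.
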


\begin{lm}
    \label{sur}
    Let~$L \supset \Bbbk$ be a finite Galois extension such that $\Bbbk \neq L$. Let~$t$ be a transcendental variable. Then~$t \not\in \Ime(\Nm_{L(t)/\Bbbk(t)})$. In particular, $\Nm_{L(t)/\Bbbk(t)}$ is not a surjection.
\end{lm}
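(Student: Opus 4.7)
The plan is a short degree count. Suppose for contradiction that $t = \Nm_{L(t)/\Bbbk(t)}(f)$ for some nonzero $f \in L(t)$; I will derive that $n := [L : \Bbbk]$ divides $1$, contradicting $L \neq \Bbbk$.

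First I would invoke Theorem \ref{NI} (Natural Irrationalities), which says that $L(t)/\Bbbk(t)$ is Galois of degree $n$, with Galois group obtained from $\Gal(L/\Bbbk)$ by letting each $\sigma$ act coefficient-wise on polynomials in $t$ (in particular, $\sigma(t) = t$). Writing $f = g/h$ with $g,h \in L[t]$ and $h \neq 0$, I would express the norms as products of Galois conjugates,
$$\Nm_{L(t)/\Bbbk(t)}(g) = \prod_{\sigma \in \Gal(L(t)/\Bbbk(t))} \sigma(g), \qquad \Nm_{L(t)/\Bbbk(t)}(h) = \prod_{\sigma \in \Gal(L(t)/\Bbbk(t))} \sigma(h),$$
both of which manifestly lie in $\Bbbk[t]$. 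The assumption then becomes the polynomial identity $t \cdot \Nm_{L(t)/\Bbbk(t)}(h) = \Nm_{L(t)/\Bbbk(t)}(g)$ in $\Bbbk[t]$.

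The key step is a degree comparison. If $g \in L[t]$ has leading coefficient $a \in L^*$ in degree $d$, then each $\sigma(g)$ has leading term $\sigma(a)\, t^d$, so the product of all $\sigma(g)$ has leading term $\Nm_{L/\Bbbk}(a)\, t^{nd}$; since $\Nm_{L/\Bbbk} \colon L^* \to \Bbbk^*$ is a group homomorphism, this leading coefficient is nonzero. Therefore $\deg \Nm_{L(t)/\Bbbk(t)}(g) = n \deg g$, and the analogous formula holds for $h$. Plugging these into the displayed identity yields $1 + n \deg h = n \deg g$, hence $n \mid 1$, so $n = 1$, contradicting the hypothesis $\Bbbk \neq L$. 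The second assertion, that $\Nm_{L(t)/\Bbbk(t)}$ is not surjective, is then immediate since $t$ is not in its image.

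I do not anticipate any real obstacle: the whole argument is the observation that norms of polynomials scale degrees by a factor of $n$, so the image of the norm on $\Bbbk(t)^*$ lies in the subgroup of elements whose numerator and denominator have degrees divisible by $n$, which excludes $t$ as soon as $n \geq 2$. The only point worth a sentence of care is that the norm of a nonzero polynomial in $L[t]$ is again a nonzero polynomial of the expected degree, which I would deduce from the multiplicativity of $\Nm_{L/\Bbbk}$ on leading coefficients.
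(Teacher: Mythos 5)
Your proof is correct and follows essentially the same route as the paper: write $t$ as the norm of $g/h$, observe that the Galois group acts coefficient-wise so that the norm of a nonzero polynomial has degree $n$ times the original (with nonzero leading coefficient equal to $\Nm_{L/\Bbbk}$ of the original leading coefficient), and derive the contradiction $n \deg g = 1 + n \deg h$. Your formulation via divisibility of degrees is slightly cleaner than the paper's comparison of leading terms, but the argument is the same.
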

\begin{proof}
    Suppose that $$\Nm_{L(t)/\Bbbk(t)}(l) = t$$ for some~$l \in L(t)$. One can rewrite~$l = \frac{p(t)}{q(t)}$, where~$p(x),q(x) \in L[x]$. By Theorem \ref{NI} one has~$\Gal(L/\Bbbk) = \Gal(L(t)/\Bbbk(t))$. Let $$n = \left[L:\Bbbk\right] = \left[L(t): \Bbbk(t)\right],$$ and let $$\Gal(L(t)/\Bbbk(t)) = \{\sigma_1,\ldots, \sigma_n\}.$$ One has $$\Nm_{L(t)/\Bbbk(t)}(x) = \sigma_1(x) \cdot \sigma_2(x) \cdot \ldots \cdot \sigma_{n}(x)$$ for any~$x \in L(t)$. Let~$f(t) = a_mt^m + \ldots + a_1t + a_0 \in L[t]$ be any polynomial and~${\sigma \in \Gal(L(t)/\Bbbk(t))}$. Notice that $$\sigma(f(t)) = \sigma(a_mt^m +\ldots+ a_1t + a_0) = \sigma(a_m)t^m +\ldots+\sigma(a_1)t + \sigma(a_0),$$ because~$t \in \Bbbk(t)$. Let us denote $$\sigma(f)(t) = \sigma(a_m)t^m +\ldots+\sigma(a_1)t + \sigma(a_0).$$ Then $$\Nm_{L(t)/\Bbbk(t)}(l) = \frac{\sigma_1(p)(t) \cdot \sigma_2(p)(t) \cdot \ldots\cdot \sigma_{n}(p)(t)}{\sigma_1(q)(t) \cdot \sigma_2(q)(t) \cdot \ldots\cdot \sigma_{n}(q)(t)} = t,$$ so $$\sigma_1(p)(t) \cdot \sigma_2(p)(t) \cdot \ldots\cdot \sigma_{n}(p)(t) = t \cdot \sigma_1(q)(t) \cdot \sigma_2(q)(t) \cdot \ldots \cdot \sigma_{n}(q)(t).$$ Let~$\deg(p) = k$ and~$\deg(q) = m$. Then the leading term of the polynomial on the left is $$\sigma_1(a_k)\cdot \sigma_2(a_k)\cdot \ldots\cdot \sigma_{n}(a_k) \cdot t^{kn},$$ where $a_k$ is the leading coefficient of the polynomial $p(t)$, and the leading term of the polynomial on the right is $$\sigma_1(b_m) \cdot \sigma_2(b_m) \cdot \ldots \cdot \sigma_{n}(b_m)\cdot t^{mn +1},$$ where $b_m$ is the leading coefficient of the polynomial $q(t)$. These terms should be equal. But~$kn \neq mn + 1$, because~$n > 1$. We may assume that~${mn + 1 > kn}$ without loss of generality. Since~$t$ is transcendental, if for some polynomial~$f$ one has~$f(t) = 0$, then~$f(x) \equiv 0$, so it means that $$\Nm_{L(t)/\Bbbk(t)}(b_m) = \sigma_1(b_m) \cdot \sigma_2(b_m)\cdot\ldots \cdot \sigma_{n}(b_m) = 0.$$ This is a contradiction, because~$\Nm_{L/\Bbbk}: L^* \to \Bbbk^*$ and~$b_m \in L^*$.
\end{proof}

\section{Balanced semidirect products}
\label{balance}

Now, let us define a balanced semidirect product. Let~$n$ be a positive integer. Write $$n = \prod\limits^m_{i=1}p^{r_i}_i,$$ where~$p_i$ are pairwise different prime numbers. One has canonical isomorphism $$\mumu_n \cong \prod\limits^{m}_{i=1}\mumu_n(p_i),$$ where $$\mumu_{n}(p_i) \cong \mumu_{p_i^{r_i}}.$$ Thereby, for the multiplicative group~$\mumu^*_n $ of residues modulo~$n$ that are coprime to~$n$, one has a canonical isomorphism $$\Aut(\mumu_n) = \mumu^*_n \cong \prod\limits^{m}_{i=1}\mumu_n(p_i)^*,$$ where $$\mumu_{n}(p_i)^* \cong \mumu^*_{p_i^{r_i}} \cong \mumu_{p_i - 1} \times \mumu_{p_i^{r_i - 1}}.$$
\begin{defn}
    \label{bal}
    Let~$q$ be a prime number. Suppose that~$n$ is divisible only by primes~$p_i$ congruent to 1 modulo~$q$, and let~$\chi: \mumu_q \to \mumu^*_n $ be a homomorphism. We say that~$\chi$ is {\sf balanced} if its composition with each of the projections~$\mumu_n^* \to \mumu_n(p_i)^*$ is an embedding. We say that a semidirect product~$G$ of~$\mumu_n$ and~$\mumu_q$ corresponding to the homomorphism~$\chi$ is {\sf balanced} if~$\chi$ is balanced.
\end{defn}

The following lemma shows that Definition \ref{bal} and definition which was given in the introduction are equivalent.  

\begin{lm}
 Let~$q$ be a prime number. Suppose that~$n$ is divisible only by primes~$p_i$ congruent to 1 modulo~$q$. Then $\mumu_n \rtimes_{\chi}\mumu_q$ is balanced if and only if $\mumu_n \rtimes_{\chi}\mumu_q$ has trivial center.
\end{lm}

\begin{proof}
Let $x$ be a generator of the group $\mumu_n$ and $y$ be a generator of the $\mumu_q$. Let $r \in \mumu_n^*$ such that $$y^{-1}xy = x^{r},$$ i.e. $\chi(y) = r$. Therefore $$y^{-1}x^my= x^{rm}$$ for $1 \leqslant m \leqslant n - 1$. 

Suppose that $\chi$ is balanced. Suppose that $y^bx^a$ is lying in the center for some~${0 \leqslant b \leqslant q - 1}$ and~${0 \leqslant a \leqslant n - 1}$. Then $$y^{b+1}x^a = y^bx^ay = y^{b+1}x^{ra}$$ and $$y^bx^{a+1} = xy^bx^a = y^bx^{r^b+a}.$$ Therefore $a(r - 1) \equiv 0(\text{mod }n)$ and $r^b \equiv 1(\text{mod } n)$. Since $\chi$ is balanced $r - 1$ is not divisible by any $p$, which divide $n$. Therefore $a = 0$. Note that $r^q \equiv 1 (\text{mod }n)$, since~$r$ is an image of the generator of order $q$. Therefore $r^{\gcd(b,q)} \equiv 1(\text{mod } n)$, but $\gcd(b,q)$ is either $1$ and in this case $\chi$ is not balanced or $q$ and in this case $b = 0$. The element~${y^bx^a = y^0x^0 = 1}$ is trivial and therefore the center is trivial.
%Therefore $x^m$ and $y^t$ commute if and only if~${mr^t - m = m(r^t - 1)}$ is divisible by $n$. Notice that by definition $r^q - 1$ is divisible by~$n$.

%If center is non-trivial, then there exists $(m,t)$ such that $m(r^t - 1)$ is divisible by~$n$. Since~$m < n$, then there exists some prime number $p$, which divides $n$ and $r^t - 1$. Since~${t < q}$ and~$r^q - 1$ is divisible by $p$, then $t = 1$ and the composition of $\chi$ with the projection $\mumu_n^* \to \mumu_n(p)^*$ is not an embedding.

Suppose that $\mumu_n^* \to \mumu_n(p)^*$ is not an embedding, then $x^{\frac{n}{p}}$ lies in the center of $\mumu_n \rtimes_{\chi}\mumu_q$. Indeed,~$x^{\frac{n}{p}}$ commutes with $x$, so it is enough to show that it is commutes with $y$. One has that $$y^{-1}x^{\frac{n}{p}}y = x^{r\frac{n}{p}}.$$ But $r - 1$ is divisible by $p$, since $\mumu_n^* \to \mumu^*_n(p)$ is not an embedding. Therefore $\frac{n}{p} (r - 1)$ is divisible by $n$, and so $x^{\frac{n}{p}} = x^{r\frac{n}{p}}$. Therefore $x^{\frac{n}{p}}$ and $y$ commute. Then center of the group~$\mumu_n \rtimes_{\chi}\mumu_q$ is non-trivial.

%Suppose that  $\mumu_n \rtimes_{\chi}\mumu_q$ is balanced as in the Definition \ref{bal}. Then $r$ is not congruent to 1 modulo $p$, where $p$ is any prime divisor of $n$.
\end{proof}
%Notice that $\mumu_n \rtimes_{\chi}\mumu_q$ is balanced if and only if $\mumu_n \rtimes_{\chi}\mumu_q$ has trivial center.

The following lemma answers the question of when balanced semidirect products are isomorphic.  

\begin{lm}
\label{isomorphism}
  Let~$q$ be a prime number. Suppose that~$n$ is divisible only by primes~$p_i$ congruent to 1 modulo~$q$. Let ~$\chi_1,\chi_2: \mumu_q \to \mumu^*_n $ be two balanced homomorphisms. Denote~${G_i = \mumu_n \rtimes_{\chi_i} \mumu_q}$. Then $G_1 \cong G_2$ if and only if $\im(\chi_1) = \im(\chi_2)$.
\end{lm}

\begin{proof}
By \cite[Theorem 3.3]{semi} one has that  $G_1 \cong G_2$ if and only if there exists $\omega \in \Aut(\mumu_n)$ and $\nu \in \Aut(\mumu_q)$ such that $$\chi_1(\nu(x)) = \omega^{-1}\chi_2(x)\omega$$ for all $x \in \mumu_q$. Since $\Aut(\mumu_n) \simeq \mumu^{*}_n$ is an abelian group, therefore the latter condition can be rewritten as follows: $G_1 \cong G_2$ if and only if there exists $\nu \in \Aut(\mumu_q)$ such that~${\chi_1(\nu(x)) = \chi_2(x)}$ for all $x \in \mumu_q$. Since the image of $\chi_i$ is a cyclic group of order~$q$ one has that~${\chi_1(\nu(x)) = \chi_2(x)}$ if and only if~${\im(\chi_1) = \im(\chi_2)}$.
\end{proof}

Let us provide an example of two balanced semidirect products between groups $\mumu_n$ and~$\mumu_q$ for the fixed pair $(n,q)$, which are not isomorphic. The example is taken from~\cite[~Example 2.5]{Sh20}.

\begin{ex}
Let $q \geqslant 3$ be a prime number. Let $p_1$ and $p_2$ be distinct prime numbers congruent to $1$ modulo $q$,
and let $n=p_1p_2$. Then $\mumu_n^*\cong  \mumu_{p_1-1}\times\mumu_{p_2-1}$,
and each of the cyclic groups $\mumu_n(p_i)^*\cong\mumu_{p_i-1}$
contains a unique subgroup of order~$q$. Let $\delta_i$, $i=1,2$, be generators of these subgroups.
Let $d_1=\delta_1\delta_2$ and $d_2=\delta_1\delta_2^{-1}$.
Let $\chi_1, \chi_2\colon\mumu_q\to\mumu_n^*$ be the homomorphisms that send
a generator of $\mumu_q$ to $d_1$ and~$d_2$, respectively. Construct the groups
$G_1$ and $G_2$ as semidirect products of $\mumu_n$ and~$\mumu_q$ corresponding to the homomorphisms $\chi_1$ and $\chi_2$, respectively. Then $G_1$ is not isomorphic to $G_2$ by Lemma \ref{isomorphism} since~${\im(\chi_1) \neq \im(\chi_2)}$. 
\end{ex}

Let us give a motivation for the definitions that appear below. Let $X$ be a Severi--Brauer variety such that $\dim(X) = q - 1$ for some prime number $q \geqslant 3$.  Suppose~$\mumu_{n_1} \rtimes_{\chi_1} \mumu_q$ and~$\mumu_{n_2} \rtimes_{\chi_2} \mumu_q$ both act on $X$. Let $t = \gcd(n_1,n_2)$. Since $\mumu_{n_i}$ is cyclic for~$1 \leqslant i \leqslant 2$, then~$\mumu_t$ is a normal subgroup in $\mumu_{n_i} \rtimes_{\chi_i} \mumu_q$ for $1 \leqslant i \leqslant 2$. Therefore, one can restrict the action of $\mumu_q$ on $\mumu_{n_i}$ to the action on~$\mumu_t$. Let us denote the morphism from~$\mumu_q$ to $\mumu_t^*$, which is provided by the action, by $\chi_{i,t}$ for~$1 \leqslant i \leqslant 2$. Then $$\mumu_{t} \rtimes_{\chi_{i,t}} \mumu_q \subset \mumu_{n_i} \rtimes_{\chi_i} \mumu_q$$ acts on $X$ for~$1 \leqslant i \leqslant 2$. In Theorem \ref{consistancy} we will show that in this case~${\im(\chi_{1,t}) = \im(\chi_{2,t})}$. We want to provide definitions of collections of subgroups to be able to say that all groups which act on the variety have  ``the same type'' in some sense, which will be described below.

Let us fix a prime number $q \geqslant 3$. Let $\frak{I}_q$ be the set of all integers that are divisible only by primes congruent to~$1$ modulo~$q$. Let $\frak{X}$ be the set of all balanced maps~${\mumu_q \to \mumu^*_n}$, where $n \in \frak{I}_q$. 

Suppose that $n \in \frak{I}_q$ such that $n = \prod^m_{i=1}p^{r_i}_i$, where $p_i$ are prime numbers and $r_i$ are arbitrary integers, and $\chi : \mumu_q \to \mumu^*_n$. Then $p_i \neq q$ for all $1 \leqslant i \leqslant m$ and therefore $$\im(\chi) \cap \prod^m_{i = 1}\mumu_{p^{r_i - 1}_i} = \{1\}.$$ This shows that the image of the map~${\chi: \mumu_q \to \mumu^*_n}$ is contained in the subgroup~${\prod^m_{i = 1}\mumu_{p_i - 1}}$. 

Suppose $n$ is divisible by $t$, then $t = \prod^m_{i=1}p^{\alpha_i}_i$, where~${\alpha_i \leqslant r_i}$ for all $1 \leqslant i \leqslant m$. Denote $$\pi_{n,t} : \; \prod^m_{i = 1}\mumu_{p_i - 1} \longrightarrow \prod_{i \mid \alpha_i \neq 0}\mumu_{p_i - 1}.$$
Let $\Delta_n \subset \mumu_n^*$, then we denote $$\Delta_n|_{\mumu^*_t} = \pi_{n,t}(\chi(\Delta_n)).$$

%and denote $$\im(\chi)|_{\mumu^*_t} = \im(\pi_{n,t} \circ \chi),$$ a subgroup, which is the image of the composition of two maps. In particular, if $\Delta_n \subset \mumu_n^*$, then we denote $$\Delta_n|_{\mumu^*_t} = \pi_{n,t}(\Delta_n).$$ 

\begin{defn}
Let $n_1,n_2$ be some integers such that $n_1,n_2 \in \frak{I}_q$. Denote $t = \gcd(n_1,n_2)$. Suppose $\Delta_{n_1}$ and $\Delta_{n_2}$ are subgroups of $\mumu^*_{n_1}$ and $\mumu^*_{n_2}$ respectively. A pair of subgroups~$(\Delta_{n_1},\Delta_{n_2})$ is called~{\sf consistent} if $$\Delta_{n_1}|_{\mumu^*_t} = \Delta_{n_2}|_{\mumu^*_t}.$$
\end{defn}

Notice that for any prime number $p \in \frak{I}_q$ and any positive integer $r$ there is only one subgroup $\Delta_{p^r} \subset \mumu^*_{p^r}$ such that the group $\Delta_{p^r}$ has order~$q$. Therefore all pairs of subgroups~$(\Delta_{p^rm},\Delta_{p^r})$ are consistent, for all $m \in \frak{I}_q$.

%\begin{defn}
%\label{type}
%Let $\frak{J} \subset \frak{I}_q$ and let $\frak{D}$ be a collection of subgroups~${\{(\Delta_n, \rho_n)\}_{n \in \frak{J}}}$ such that~${\Delta_n \subset \mumu^*_n}$ is a subgroup for all $n \in \frak{J}$ and $\rho_n : \Delta_n \hookrightarrow \mumu_n^*$ is the inclusion map. The collection $\frak{D}$ is called a {\sf consistent balanced collection} if the following conditions are met:
%\begin{enumerate}
%  \item the group $\Delta_n$ has order $q$ for each $n \in \frak{J}$;
%  \item the map $\rho_n$ is a balanced map for all $n \in \frak{J}$;
 % \item all pairs of subgroups $(\Delta_n,\Delta_m)$ are consistent.
%\end{enumerate}

%\end{defn}

\begin{defn}
\label{hastype}

Let $\frak{K} \subset \{(n,\chi) \in \frak{I}_q \times \frak{X} \mid\chi : \; \mumu_q \to \mumu^*_n\}$.  Suppose that $\frak{A}$ is a collection of groups $$\{\mumu_n \rtimes_{\chi} \mumu_q\}_{(n, \chi) \in \frak{K}}.$$ The collection $\frak{A}$ is {\sf a consistent balanced collection} if all pairs of subgroups $(\im(\chi_1),\im(\chi_2))$ are consistent for all $\chi_1,\chi_2 \in \pi(\frak{K})$, where $\pi: \frak{I}_q \times \frak{X} \to \frak{X}$ is the natural projection. In particular, for $(n,\chi_1),(n,\chi_2) \in \frak{K}$ one automatically has that~${\im(\chi_1) = \im(\chi_2)}$.

\end{defn}

%It is enough to check that all pairs of subgroups $(\Delta_n,\Delta_{p_1p_2})$, where~${n,p_1,p_1 \in \frak{I}_q}$ and $p_1,p_2$ two different prime numbers, are consistent. And even more, if one has a consistent balanced collection of subgroups $\frak{D}$ then the subgroup~$\Delta_n$ can be reconstructed from $\Delta_{p_ip_j}$, where $p_i,p_j$ are distinct prime numbers dividing $n$.  

The following definition is a generalization of the previous definition.

\begin{defn}
\label{hastypelong}
Let $\frak{K} \subset \{(n,\chi) \in \frak{I}_q \times \frak{X} \mid\chi : \; \mumu_q \to \mumu^*_n \}$. Suppose that $\frak{B}$ is a collection of groups $$\{\mumu_q \times (\mumu_n \rtimes_{\chi} \mumu_q)\}_{(n, \chi) \in \frak{K}}.$$ The collection $\frak{B}$ is {\sf a consistent balanced collection} if all pairs of subgroups $(\im(\chi_1),\im(\chi_2))$ are consistent for all $\chi_1,\chi_2 \in \pi(\frak{K})$, where $\pi: \frak{I}_q \times \frak{X} \to \frak{X}$ is the natural projection. In particular, for $(n,\chi_1),(n,\chi_2) \in \frak{K}$ one automatically has that~${\im(\chi_1) = \im(\chi_2)}$.
\end{defn}

%The following lemma shows that a collection $\frak{B}$ is a consistent balanced collection if and only if a collection $\frak{A} = \{\mumu_n \rtimes_{\chi} \mumu_q \mid \mumu_q \times (\mumu_n \rtimes_{\chi} \mumu_q )\in \frak{B}\}$ is a consistent balanced collection.

The following lemma shows when groups of type $\mumu_q \times (\mumu_n \rtimes_{\chi} \mumu_q)$ are isomorphic.

\begin{lm}
\label{correctness}
 Let $q$ be a prime number. Suppose that $G_i = \mumu_q \times (\mumu_n \rtimes_{\chi_i} \mumu_q)$ is a group such that $\mumu_n \rtimes_{\chi_i} \mumu_q$ is a balanced semidirect product for $1 \leqslant i \leqslant 2$. Then $G_1 \cong G_2$ if and only if $\im(\chi_1) = \im(\chi_2)$.
\end{lm}

\begin{proof}
If $\im(\chi_1) = \im(\chi_2)$ then by Lemma \ref{isomorphism} one has an isomorphism $G_1 \cong G_2$. Suppose~$G_1 \cong G_2$, since both $\chi_1$ and $\chi_2$ are balanced then $Z(G_1) \cong Z(G_2)$ and both are $\mumu_q$, where~$Z(G)$ is the center of a group $G$. Then $$(\mumu_n \rtimes_{\chi_1} \mumu_q) = G_1 / Z(G_1) \cong G_2/Z(G_2) = (\mumu_n \rtimes_{\chi_2} \mumu_q).$$ Therefore by Lemma \ref{isomorphism} one has $\im(\chi_1) = \im(\chi_2)$.

\end{proof}

%In the case where $X$ is a Severi--Brauer variety over a field of positive characteristic we need smaller class of consistent balanced collections. 

%\begin{defn}
%Let $l$ and $q$ be prime numbers such that $l \neq q$. Suppose that~$n$ is divisible only by primes~$p_i$ congruent to 1 modulo~$q$, and let~$\chi: \mumu_q \to \mumu^*_n $ be a balanced homomorphism. The homomorphism is called {\sf Frobenius} if $\chi$ is as follow: $$\chi(y): x \to x^{l^{\left(d_0 \cdot d \cdot q^{(k-1)}\right)}},$$ for $y$ the generator of $\mumu_q$ and $x \in \mumu_n$, where $\ord_n(l) = q^kd_0$ and $1 \leqslant d \leqslant q -1$. We say that a semidirect product of~$\mumu_n$ and~$\mumu_q$ corresponding to the homomorphism~$\chi$ is {\sf Frobenius} if~$\chi$ is Frobenius.
%\end{defn}

%Let us generalize the previous definition to the families of balanced semidirect products. Let us fix a prime number $q \geqslant 3$, prime number $l \neq q$ and a positive integer $k$. Let~$\frak{I}_{k,q,l}$ be the set of all integers that are divisible only by primes $p$ congruent to~$1$ modulo~$q$, such that ${v_q(\ord_{p}(l)) = k}$. 

%\begin{defn}
%  \label{Ftype}
% Let $\frak{J} \subset \frak{I}_{k,q,l}$ and let $\frak{D} = \{\mumu_n \rtimes_{\chi_n}\mumu_q \}_{n \in \frak{J}}$ be a consistent balanced collection. The collection $\frak{D}$ is called a {\sf Frobenius} if for all $n \in \frak{J}$ one has $$\chi_n(y): x \to x^{l^{\left(d_0 \cdot d \cdot q^{(k-1)}\right)}},$$ for $y$ the generator of $\mumu_q$ and $x \in \mumu_n$, where $\ord_n(l) = q^kd_0$ and $1 \leqslant d \leqslant q -1$. 

%\end{defn}

The following lemma shows that if for a group $G$ one has that~${G \cong \mumu_q \times (\mumu_n \rtimes_{\chi_i} \mumu_q)}$ for some prime $q \geqslant 3$ and $\mumu_n \rtimes_{\chi_i} \mumu_q$ is a balanced semidirect product, then such a decomposition of the group is unique. In particular, one can talk about the group~${G \cong \mumu_q \times (\mumu_n \rtimes_{\chi_i} \mumu_q)}$ as a balanced group, and this definition does not depend on the decomposition $G$.

\begin{lm}
\label{uniq}
Let $q_1,q_2 \geqslant 3$ be prime numbers. Let $n \in \frak{I}_{q_1}$ and $m \in \frak{I}_{q_2}$. Let $\mumu_n \rtimes_{\chi_n} \mumu_{q_1}$ and $\mumu_m \rtimes_{\chi_{m}} \mumu_{q_2}$ be balanced semidirect products. Then $$\mumu_{q_1} \times (\mumu_n \rtimes_{\chi_n} \mumu_{q_1}) \cong \mumu_{q_2} \times (\mumu_m \rtimes_{\chi_m} \mumu_{q_2})$$ if and only if $q_1 = q_2$, $n = m$ and $\im(\chi_n) = \im(\chi_m)$
\end{lm}

\begin{proof}
If $q_1 = q_2$, $n = m$ and $\im(\chi_n) = \im(\chi_m)$ then by Lemma \ref{isomorphism} one has an isomorphism. Suppose $$\mumu_{q_1} \times (\mumu_n \rtimes_{\chi_n} \mumu_{q_1}) \cong \mumu_{q_2} \times (\mumu_m \rtimes_{\chi_m} \mumu_{q_2}).$$ Then since semidirect products are balanced one has that $$\mumu_q = Z(\mumu_{q_1} \times (\mumu_n \rtimes_{\chi_n} \mumu_{q_1})) \cong Z(\mumu_{q_2} \times (\mumu_m \rtimes_{\chi_m} \mumu_{q_2})) = \mumu_{q_2}.$$ Therefore, orders of the centers are the same and this means that $q_1 = q_2$. Since $q_1 = q_2$ and the orders of the groups are the same, then $n = m$. The rest directly follows from Lemma~\ref{correctness}.
\end{proof}

Let us show that two definitions of a consistent balanced collection agree with each other.

\begin{lm}
Let $\frak{K} \subset \{(n,\chi) \in \frak{I}_q \times \frak{X} \mid\chi : \; \mumu_q \to \mumu^*_n \}$. Suppose that $\frak{B}$ is a collection of groups $$\{\mumu_q \times (\mumu_n \rtimes_{\chi} \mumu_q)\}_{(n, \chi) \in \frak{K}}.$$ Then $\frak{B}$ is a consistent balanced collection if and only if a collection $$\frak{A} = \{\mumu_n \rtimes_{\chi} \mumu_q \mid \mumu_q \times (\mumu_n \rtimes_{\chi} \mumu_q )\in \frak{B}\}$$ is a consistent balanced collection.
\end{lm}

 \begin{proof}
 By Lemma \ref{uniq} for any $G \in \frak{B}$ there is a unique decomposition of $G$ as~${\mumu_q \times (\mumu_n \rtimes_{\chi} \mumu_q)}$. For groups in $\frak{A}$, one also has that the decomposition as $\mumu_n \rtimes_{\chi} \mumu_q$ is unique. Therefore, definitions agrees and $\frak{B}$ is a consistent balanced collection if and only if $\frak{A}$ is a consistent balanced collection.
 \end{proof}

\begin{thm}
\label{consistancy}
  Let $q$ be a prime number. Let $X$ be a non-trivial Severi--Brauer variety over a field $\mathbb{K}$, where $\dim(X) = q - 1$. Suppose $\frak{A} = \{\mumu_n \rtimes_{\chi} \mumu_q \mid  \mumu_n \rtimes_{\chi} \mumu_q \subset \Aut(X) \}$. Then~$\frak{A}$ is a consistent balanced collection. In particular, if both $\mumu_n \rtimes_{\chi_1} \mumu_q$ and $\mumu_n \rtimes_{\chi_2} \mumu_q$ act on~$X$ then $\im(\chi_1) = \im(\chi_2)$.
\end{thm}

\begin{proof}
Let us firstly show that if for some~$t \in \frak{I}_q$ there are two subgroups $$G_i \cong \mumu_t \rtimes_{\chi_i} \mumu_q \subset \Aut(X)$$ for $1 \leqslant i \leqslant 2$, then $\im(\chi_1) = \im(\chi_2)$. Suppose there exists~$t \in \frak{I}_q$ such that there are $$G_i \cong \mumu_t \rtimes_{\chi_i} \mumu_q \subset \Aut(X)$$ with $\im(\chi_1) \neq \im(\chi_2)$. Let $\theta_1$ and $\theta_2$ be generators of $\mumu_t$ in $G_1$ and $G_2$ respectively. By~Lemma~\ref{Aut} one has that~${\Aut(X)\cong A^*/ \mathbb{K}^*}$, so $\theta_1$,$\theta_2 \in A^*/ \mathbb{K}^*$. By Lemma~\ref{root} one can choose~${u_1}$,${u_2 \in A^*}$ which are lifts of $\theta_1$,$\theta_2,$ such that $u^t_1 = u^t_2 = 1$. Then~$\mathbb{K}(u_1)$ and~$\mathbb{K}(u_2)$ are isomorphic extensions of $\mathbb{K}$, since they both are extension of $\mathbb{K}$ by roots of unity of degree $t$. Let us find the group $\Gal(\mathbb{K}(u_1)/\mathbb{K})$. Since $A$ is also $\mathbb{K}(u_1)$-algebra one has $$1 < \left[\mathbb{K}(u_1):\mathbb{K}\right]  = \frac{\left[A:\mathbb{K}\right]}{\left[A:\mathbb{K}(u_1)\right]} = \frac{q^2}{\left[A:\mathbb{K}(u_1)\right]}.$$ One has $A \neq \mathbb{K}(u_1)$ since $A$ is a central algebra, then $\left[\mathbb{K}(u_1):\mathbb{K}\right] = q$ and so~$\Gal(\mathbb{K}(u_1)/\mathbb{K}) = \mumu_q$. Denote an isomorphism between $\mathbb{K}(u_1)$ and $\mathbb{K}(u_2)$ by $f$ and the inclusion of $\mathbb{K}(u_i)$ into~$A$ by~$\rho_i$. Then by Skolem–Noether theorem there exists $a \in A^{*}$ such that $\rho_1(x) = a^{-1}\rho_2(f(x))a$ for all~$x \in \mathbb{K}(u_1)$. Let us replace the group $G_2$ by a group~$a^{-1}G_2a$ which is isomorphic to~$G_2$. Then one can choose $u_1 = u_2$, since they both are roots of unity in the same field extension. Since the generator $\alpha_1$ of $\mumu_q$ acts on $\theta_1$ by sending $\theta_1$ to some power of~$\theta_1$, we can extend this action of $\alpha_1$ to the whole field~$\mathbb{K}(u_1)$ such that $\alpha_1$ acts trivially on~$\mathbb{K}$. Denote this extension of the action by $\widetilde{\alpha_1}$. By the construction~$ \widetilde{\alpha_1} \in \Gal(\mathbb{K}(u_1)/\mathbb{K})$ and therefore~${\widetilde{\mumu_q} \subset \Gal(\mathbb{K}(u_1)/\mathbb{K})}$. Note that the action of $\widetilde{\mumu_q}$ on $u$ is the same as the action $\mumu_q$ on $\mumu_n$ then we have that $\chi_1(\mumu_q) \subset \Gal(\mathbb{K}(u_1)/\mathbb{K})$. By the same argument~${\chi_2(\mumu_q) \subset \Gal(\mathbb{K}(u_1)/\mathbb{K})}$. Also, the order $$|\im(\chi_1)|= |\im(\chi_2)| =  |\Gal(\mathbb{K}(u_1)/\mathbb{K})| = q.$$ Therefore $$\im(\chi_1) = \Gal(\mathbb{K}(u_1)/\mathbb{K}) = \im(\chi_2),$$ a contradiction. 
  
%If there is a pair of two different prime numbers $p_i$ and $p_j$, such that a group $\Delta_{p_ip_j}$ was not defined yet, then we just chose any subgroup of order $q$ in $\mumu^*_{p_ip_j}$. If there is~$m \in \frak{I}_q$ such that the pair of subgroups $(\Delta_m,\Delta_{p_ip_j})$ is non consistent, then $m$ was divisible by~$p_ip_j$ and therefore there was a group $G \subset \Aut(X)$ which is isomorphic to $\mumu_{p_ip_j} \rtimes_{\chi} \mumu_q$, a contradiction with the fact that a group $\Delta_{p_ip_j}$ was not defined yet. Suppose there is $n \in \frak{I}_q$ such there is no subgroups $G \subset \Aut(X)$, such that $G$ is isomorphic to $\mumu_n \rtimes_{\chi} \mumu_q$.  Let $n = \prod^m_{i=1}p^{r_i}_i$, then one can uniquely define subgroup $\Delta_n \subset \mumu_n^*$ such that all pairs of subgroups $(\Delta_n, \Delta_{p_ip_j})$ are consistent. 

Let us show that $\frak{A}$ is a consistent balanced collection. Suppose that there is a pair of subgroups $\mumu_{n_1} \rtimes_{\chi_{n_1}} \mumu_q$ and $\mumu_{n_2} \rtimes_{\chi_{n_2}} \mumu_q$ of $\Aut(X)$ which provide a non consistent pair~${(\im(\chi_{n_1}),\im(\chi_{n_2}))}$. Let~${t = \gcd(n_1,n_2)}$. Then there are~${\chi_{n_{1,t}},\chi_{n_{2,t}}: \mumu_q \to \mumu^*_t}$, morphisms corresponded to the restrictions of the action $\mumu_q$ to $\mumu_t$, such that $$\im(\chi_{n_1})|_{\mumu^*_t}  = \im(\chi_{n_{1,t}}) \neq \im(\chi_{n_{2,t}}) = \im(\chi_{n_2})|_{\mumu^*_t}.$$ Notice that both groups $$G_i \cong \mumu_t \rtimes_{\chi_{i,t}} \mumu_q$$ are isomorphic to a subgroup of $\Aut(X)$ for $i = 1$, $2$. This gives a contradiction with what was proven above, since $\im(\chi_{n_{1,t}}) \neq \im(\chi_{n_{2,t}})$.

\end{proof}

\section{Example in characteristic 0}
\label{char0}
In this section, we prove Theorem \ref{example}. The idea of the construction is as follows. By Lemma \ref{sur}, one has a construction of a non-trivial Severi--Brauer variety of dimension~$n-1$ by any cyclic extension~$L/\Bbbk$ of degree~$n$. We simply consider the cyclic algebra~$(L(t)/\Bbbk(t), \sigma,t)$, by Lemmas \ref{sur} and  \ref{nm}, this algebra does not split, so the corresponding Severi--Brauer variety is non-trivial.

\begin{proof}[Proof of Theorem \ref{example}]

    Let~$\omega$ and~$\tau$ be non-trivial~$q$-th root of unity and~$q$-th root of~$2$, respectively. For any prime number~$p$ and for any positive integer~$r$, let us denote a primitive root of unity of degree~$p^r$ by~$\xi_{p^r}$. Let $D = \{n \in \mathbb{Z} \mid \mumu_q \times (\mumu_n \rtimes_{\chi}\mumu_q) \in \frak{A} \}$ for some balanced map $\chi$. 
    
    Let us take field extension~$L$ of~$\mathbb{Q}(\omega)$ generated by~$\tau$ and all~$\xi_{p^r}$, where $p^r$ is a divisor of some $n \in D$. More precisely, let us enumerate all pairs~$(p,r)$, where~$p$ is a prime number such that~$p \equiv 1 \text{ (mod }q$) and~$r$ is a positive integer, such that $p^r$ is a divisor of some $n \in D$, by positive integers (if the set $D$ is finite, then enumeration is by first several integers). Let~$(p,r)$ corresponds to~$t$. Then let us denote~$\zeta_t = \xi_{p^r}$. Then~$L = \mathbb{Q}(\omega, \tau, \zeta_1, \zeta_2, \zeta_3,\ldots)$. If~$D$ was a finite set, then $L$ is a finite extension of $\mathbb{Q}(\omega)$, otherwise it is an infinite extension. Let us denote $$B_j = \{p \mid \exists r \text{ and } t \in \mathbb{N} \text{ such that } t \leqslant j \text{ and } t \text{ corresponds to } (p,r) \}$$ in other words it is a set of all prime numbers which appears in the first $j$ pairs. And $$C_{j,p} = \{r\mid \exists t \in \mathbb{N} \text{ such that } t \leqslant j \text { and } t \text{ corresponds to } (p,r) \},$$ in other words it is a set of powers in which $p$ appears in the first $j$ pairs. Let $r_j(p)$ be the maximum of the set~$C_{j,p}$. Let us denote $L_j = \mathbb{Q}(\omega, \tau, \zeta_1, \zeta_2, \ldots ,\zeta_j)$. Notice that $$L_j =  \mathbb{Q}(\omega, \tau, \xi_{N(j)}),$$ where~$\xi_{N(j)}$ is root of unity of degree~$N(j) = \prod_{p \in B_j}p^{r_j({p)}}$.
    
    %Let us take field extension~$L$ of~$\mathbb{Q}(\omega)$ generated by~$\tau$ and all~$\xi_{p_i^r}$, where $r$ is any positive integer and~$p_i$ are prime numbers such that~${p_i \equiv 1 \text{ (mod }q}$). More precisely, let us enumerate all pairs~$(p,r)$, where~$p$ is a prime number such that~$p_i \equiv 1 \text{ (mod }q$) and~$r$ is a positive integer, by positive integers. Let~$(p,r)$ corresponds to~$m$. Then let us denote~$\zeta_m = \xi_{p^r}$. Then~$L = \mathbb{Q}(\omega, \tau, \zeta_1, \zeta_2, \zeta_3,\ldots)$. Let us denote $$B_j = \{p \mid \exists r \text{ and } m \in \mathbb{N} \text{ such that } m \leqslant j \text{ and } m \text{ corresponds to } (p,r) \}$$ and $$C_{j,p} = \{r\mid \exists m \in \mathbb{N} \text{ such that } m \leqslant j \text { and } m \text{ corresponds to } (p,r) \}.$$ Let $r_j(p)$ be the maximum of the set~$C_{j,p}$. Let us denote $L_j = \mathbb{Q}(\omega, \tau, \zeta_1, \zeta_2, \ldots ,\zeta_j)$. Notice that $$L_j =  \mathbb{Q}(\omega, \tau, \xi_{N(j)}),$$ where~$\xi_{N(j)}$ is root of unity of degree~$N(j) = \prod_{p \in B_j}p^{r_j({p)}}$.
    
    By construction, the Galois group of the extension~$L/\mathbb{Q}(\omega)$ is a profinite group, which is realized as an inverse limit of Galois groups of extensions~$L_j/\mathbb{Q}(\omega)$. One can notice that $$ \Gal(L_j/\mathbb{Q}(\omega)) = \Gal(\mathbb{Q}(\omega, \tau, \xi_{N(j)})/\mathbb{Q}(\omega)) \simeq \mumu_q \times \mumu^*_{N(j)} \simeq \mumu_q \times \prod\limits_{p_i \in B_j}\mumu^*_{N(j)}(p_i).$$ 
    %where~$m = |B_j|$. 
    Let us denote 
    \begin{equation}
        x_{j,p}= \frac{(p -1)p^{r_j(p) -1}}{q}.
        \label{form}
    \end{equation} 
    Let $\chi_{N(j)}: \mumu_q \to \mumu^*_{N(j)}$ be a map which agrees with all maps $\chi: \mumu_q \to \mumu^*_n$ appears from groups $\mumu_q \times (\mumu_n \rtimes_{\chi} \mumu_q) \in \frak{A}$, i.e. if $N(j)$ was divisible by $n$ then $\chi_{N(j)} = \pi_{n,N(j)} \circ \chi$, where $\pi_{n,N(j)}$ is the natural projection $\mumu_{N(j)}^*$ to $\mumu_n^*$. Since the collection $\frak{A}$ is consistent, this map exists. Consider the element $$\sigma_j = \gamma \times \prod\limits_{p \in B_j} \epsilon^{x_{j,p}}_{j,p},$$ of $\mumu_q \times \prod\limits_{p \in B_j}\mumu^*_{N(j)}(p)$, where~$\epsilon_{j,p}$ is a generator of~$\mumu^*_{p^{r_j(p)}}$ and~$\gamma$ is a generator of~$\mumu_q$, such that $\im(\chi_{N(j)})$ is a group generated by $\sigma_j$. Denote $\im(\chi_{N(j)})$ by~$\Delta_j$.
    
    %The finite step of extension will looks like extension of $\mathbb{Q}(\omega)$ with $\tau$ and $\xi_n$, where $\xi_n$ is root of unity of degree $n = \prod\limits^n_{i=1}p^{r_i}_i$, where $p_i \equiv 1$ (mod $q$), so the Galois group is $\mumu_q \times \mumu^*_n$, 
    
    Let us denote the group of~$p$-adic integers by~$\mathbb{Z}_p$, and the multiplicative group of invertible $p$-adic integers by $\mathbb{Z}^*_p$. One has that
    \begin{multline*}
        \Gal (L/\mathbb{Q}(\omega))= \varprojlim\Gal(L_j/\mathbb{Q}(\omega))= \\
        = \varprojlim (\mumu_q \times \prod\limits_{p \in B_j} \mumu^*_{N(j)}(p))= \mumu_q \times \prod_{\substack{p\in D\\ p \text{ prime}}} \mathbb{Z}_{p}^* \simeq \mumu_q \times \prod \limits_{\substack{p\in D\\ p \text{ prime}}} \mumu_{p - 1} \times \prod \limits_{\substack{p\in D\\ p \text{ prime}}} \mathbb{Z}_{p},
    \end{multline*}
    because there is an isomorphism~$\mathbb{Z}_p^* \simeq \mumu_{p-1} \times \mathbb{Z}_p$. 
    
    Consider the element $$\sigma = \gamma \times \prod_{\substack{p\in D\\ p \text{ prime}}} \epsilon^{\frac{p -1}{q}}_p \times \prod_{\substack{p\in D\\ p \text{ prime}}} 0 $$ of $\mumu_q \times \prod \limits_{\substack{p\in D\\ p \text{ prime}}} \mumu_{p - 1} \times \prod \limits_{\substack{p\in D\\ p \text{ prime}}} \mathbb{Z}_{p},$ where~$\epsilon_p$ is a generator of~$\mumu_{p -1}$ and~$\gamma$ is a generator of~$\mumu_q$, such that $\sigma|_{L_j} = \sigma_j$. Since collection $\frak{A}$ was consistent one can choose such element $\sigma$. Let $\Delta$ be a group generated by $\sigma$. Notice that~$\Delta \simeq \mumu_q$ is a subgroup of order~$q$ of $$\Gal(L/\mathbb{Q}(\omega)) = \mumu_q \times \prod \limits_{\substack{p\in D\\ p \text{ prime}}} \mumu_{p - 1} \times \prod \limits_{\substack{p\in D\\ p \text{ prime}}} \mathbb{Z}_{p}.$$ Since~$\Delta$ is a finite subgroup, one can notice that it is a closed subgroup of~$\Gal(L/\mathbb{Q}(\omega))$.  
    
    It means that we can take~$\Bbbk = L^{\Delta}$, so that $L/ \Bbbk$ is a Galois extension of degree~$q$. Additionally one has $$\Gal(L/\Bbbk) = \Delta \simeq  \mumu_q.$$ 
    
    Notice that~$L_j^{\Delta_j} = \Bbbk\cap L_j$ and so $$\left[L_j : \Bbbk\cap L_j\right] = \left[L_j : L_j^{\Delta_j}\right] = q$$ and finally~$\Gal(L_j / \Bbbk\cap L_j)$ is generated by $\sigma|_{L_j} = \sigma_j$. So if~$l \in L_j$ then by the construction one has~$\sigma(l) = \sigma|_{L_j}(l) = \sigma_j(l)$.
    
    One can construct by the field extension $L(t)/\Bbbk(t)$ a non-trivial Severi--Brauer variety~$X$. Let~${A=(L(t)/\Bbbk(t),\sigma,t)}$ be a corresponding to the variety central simple algebra. Then by Lemma \ref{Aut} one has~${\Aut(X) = A^*/\Bbbk(t)^*}$. We claim that $X$ is the variety we want, i.e., such a variety that any group $G \in \frak{A}$ is acting on $X$.

    Fix a positive integer $m$ and some non-negative integers $r_1,r_2,\ldots,r_m$. Consider $m$ different prime numbers~$p_1,\ldots,p_m$, such that $p_i \equiv 1(\text{mod }q)$ for $1 \leqslant i \leqslant m$. Let us denote~${n = \prod^m_{i=1}p_i^{r_i}}$. Suppose that $n \in D$.
    
    Consider the element $$\theta = \xi_{p^{r_1}_{{1}}} \cdot \ldots\cdot \xi_{p^{r_m}_{{m}}}.$$ Since $n \in D$, then $\theta \in L$. Let~$j$ be the minimal number such that~$\theta \in L_j$. Notice that~${\theta^n = 1 \in \Bbbk(t)}$. Suppose that~$\theta^x \in \Bbbk(t)$ for some positive $x < n$, then by the construction $\theta^x \in \Bbbk$. So $\theta^x$ is fixed by~$\sigma_j$. Note that $$\sigma_j(\xi_{p^{r_i}_i}) = \xi_{p^{r_i}_i}^{\epsilon^{x_{j,p_i}}_{j,p_i}}$$ for $1 \leqslant i \leqslant m$,  therefore $$\sigma_j(\theta^x) = \sigma_j(\xi^x_{p^{r_1}_1}) \cdot \ldots \cdot \sigma_j(\xi^x_{p_m^{r^m}}) = \xi^{x\epsilon^{x_{j,p_1}}_{j,p_1}}_{p_1^{r_1}} \cdot \ldots \cdot \xi^{x\epsilon^{x_{j,p_m}}_{j,p_m}}_{p_m^{r_m}} = \xi_{p_1^{r_1}}^{x} \cdot \ldots \cdot \xi^{x}_{p_m^{r_m}} = \theta^x.$$ Then one has equality $$\xi^{x\left(\epsilon^{x_{j,p_1}}_{j,p_1} - 1\right)}_{p_1^{r_1}} \cdot \ldots \cdot \xi_{p_m^{r_m}}^{x\left(\epsilon^{x_{j,p_m}}_{j,p_m} - 1\right)} = 1.$$ Since all $\xi_{p^{r_i}_i}$ are primitive roots of unity of degree $p_i^{r_i}$ for different $p_i$ for $1 \leqslant i \leqslant m$, the product of its powers is equal $1$ if and only if all powers are divided by corresponding $p^{r_h}_h$. Since $x < n$ there is at least one $p_h$ such that $v_{p_h}(n) > v_{p_h}(x)$, let us consider such $p_h$. Then from $$x\left(\epsilon^{x_{j,p_h}}_{j,p_h} - 1\right) \text{ is divisible by }p^{r_h}_h$$ we have that $\epsilon^{x_{j,p_h}}_{j,p_h} - 1 \text{ is divisible by }p_h.$ Since $\epsilon_{j,p_h}$ is a a generator of the group $\mumu^*_{p_h^{r_j(p_h)}}$, then $\epsilon_{j,p_h}$ is a generator of a group $\mumu^*_{p_h}$ after considering all remainders modulo $p_h^{r_j(p_h)}$ as remainders modulo $p_h$. This means that for any integer $y$, one has that $\epsilon_{j,p_h}^y - 1$ is divisible by $p_h$ if and only if $y$ is divisible by $p_h$. But formula \ref{form} shows that $x_{j,p_h}$ is never divisible by $p_h$ since $q \neq p_h$.

    So~$\theta$ has order exactly~$n$.

Let $G_n \subset A^*/\Bbbk(t)^*$ be the subgroup generated by images of~$\theta$ and~$\alpha$, where $\alpha$ is the standard generator of the cyclic algebra $A$ (so $\alpha^q = t$). Let us prove that $G_n$ is isomorphic to~${\mumu_n \rtimes_{\chi} \mumu_q}$, where the semidirect product is balanced and $\mumu_q \times (\mumu_n \rtimes_{\chi} \mumu_q) \in \frak{A}$. Since~$\alpha^i \notin \Bbbk(t)$ for all~$1 \leqslant i \leqslant q-1$ and~$\theta^x \notin \Bbbk(t)$ for~${1 \leqslant x \leqslant n-1}$ (as was proved above), the images of $\alpha$ and $\theta$ in $A^{*}(\Bbbk(t))/\Bbbk(t)^{*}$ have orders $q$ and $n$, respectively. By definition one has $$\theta \alpha = \alpha \sigma(\theta) = \alpha \sigma|_{L_j}(\theta) = \alpha\sigma_j(\theta).$$ By the construction, $\sigma_j(\theta) = \chi(\theta).$ Therefore, $G_n$ is isomorphic to a semidirect product of $\mumu_n$ and $\mumu_q$ which is isomorphic exactly $\mumu_n \rtimes_{\chi} \mumu_q$. 

%as above one has $\sigma_j(\theta) = \theta^T$. Hence conjugation by the class of $\alpha$ sends the class of $\theta$ to its $T$-th power. Therefore, $G_n$ is isomorphic to a semidirect product of $\mumu_n$ and $\mumu_q$. As was proved above, $\theta \neq \theta^T$; therefore, action by $\alpha$ is nontrivial, and so $G_n$ is not a direct product. By Theorem \ref{Sav} the only non-direct semidirect product that can arise must be balanced.

%Since $\epsilon_{j,h}$ is a generator of a group $\mumu^*_{\varpi}$, where $\varpi = p^{r_j(p_h)}_h$, one has $\ord_{p_h}(\epsilon_{j,h}) \text{ is divisible by } p_h - 1$. By formula \eqref{form} one has $${v_q(x_{j,h}) = v_q(p_h - 1) - 1},$$ and so $x_{j,h} \text{ is not divisible by } p_h -1$ for all $1 \leqslant h \leqslant k$. So $$T = \sum^m_{i=1} \epsilon^{x_{j,i}}_{j,i} t_i  \frac{n}{p_i^{r_i}} \equiv \epsilon_{j,h}^{x_{j,h}} \not\equiv 1 (\text{mod } p_h^{r_h}),$$ for all $1 \leqslant h \leqslant m$. This shows that the map $\mumu_q \to \mumu_{n}(p_i)$ is an embedding. So~$\sigma|_{L_j}$ is balanced and that is why~$G_n\cong \mumu_n \rtimes \mumu_q$. 
    
    Consider the group $G(n)$ generated by~$G_n$ and~$\tau$. The elements~$\theta$ and~$\tau$ commute because they both lie in the field $L(t)$. Notice that~$\tau \in L_0$, and so $$\sigma(\tau) = \sigma|_{L_0}(\tau) = \omega\tau.$$ Then $$\tau \alpha = \alpha \sigma(\tau) = \omega \alpha \tau,$$ so in~$A^*/\Bbbk(t)^*$ elements~$\alpha$ and~$\tau$ commute. This means that $G(n)$ is isomorphic to~${\mumu_q \times G_n \cong \mumu_q \times (\mumu_n \rtimes_{\chi} \mumu_q)}$.

The integer $m$ and the integers $r_1,\ldots,r_m$ are arbitrary. Hence, for all $n \in D$, there is a subgroup $G(n)$ of $\Aut(X)$ such that $G(n) \simeq \mumu_q \times (\mumu_n \rtimes_{\chi} \mumu_q).$

 %Other finite subgroups cannot arise by Theorem \ref{Sav} and Theorem \ref{consistancy}. 

Consider the case where $q = 3$. Then $\omega$ is a non-trivial root of unity of degree~$3$. In this case, $X$ is a non-trivial Severi--Brauer surface. By \cite[Proposition 1.5(ii)]{Vicul} one has~${\mumu_3^3 \subset \Bir(X)}$, because $\omega \in \Bbbk(t)$ by the construction. So the surface $X$ has also $\mumu_3^3$ as a subgroup of $\Bir(X)$.
\end{proof}

\section{Classification in positive characteristic}
\label{criter}

In this section, we prove Theorems \ref{maincon} and \ref{mainconp}.  

Let $l$ and $n$ be integers such that $\gcd(n,l) = 1$. Then let~$\ord_n(l)$ denote the minimal integer~$r$ such that~$l^r - 1$ is divisible by~$n$. 

Let $q$ be a prime number and $t$ be some integer. Let~$v_q(t)$ be the maximal~$r$ such that~$t$ is divisible by~$q^r$.

%Now $p,q$ always denote some prime numbers.

Firstly, let us provide an improved version of the Theorem \ref{Sav}. The proof of the new version is exactly the same as that given in \cite[Section 7]{ASav} with small modifications. In the case where $X$ is a Severi--Brauer variety over a field of positive characteristic, we need a smaller class of consistent balanced collections. 

\begin{defn}
Let $l$ and $q$ be prime numbers such that $l \neq q$. Suppose that~$n$ is divisible only by primes congruent to $1$ modulo~$q$, and let~$\chi: \mumu_q \to \mumu^*_n $ be a balanced homomorphism. The homomorphism said to be {\sf of Frobenius type} if $\chi$ is as follow: $$\chi(y): x \mapsto x^{l^{\left(d_0 \cdot d \cdot q^{(k-1)}\right)}},$$ for $y$ the generator of $\mumu_q$ and $x \in \mumu_n$, where $\ord_n(l) = q^kd_0$, such that $d_0$ is not divisible by $q$, and $1 \leqslant d \leqslant q -1$. We say that a semidirect product of~$\mumu_n$ and~$\mumu_q$ corresponding to the homomorphism~$\chi$ is {\sf Frobenius} if~$\chi$ is of Frobenius type.
\end{defn}

Note that Frobenius semidirect product between $\mumu_n$ and $\mumu_q$ is unique with respect to isomorphism.

Let us specialize the previous definition to the families of balanced semidirect products, i.e. Definitions \ref{hastype} and \ref{hastypelong}. Let us fix a prime number $q \geqslant 3$, a prime number $l \neq q$, and a~positive integer $k$. Let~$\frak{I}_{k,q,l}$ be the set of all integers that are divisible only by primes~$p$ congruent to~$1$ modulo~$q$, such that ${v_q(\ord_{p}(l)) = k}$. 

\begin{defn}
  \label{Ftype}
 Let $\frak{J} \subset \frak{I}_{k,q,l}$ and let $\frak{D} = \{\mumu_n \rtimes_{\chi_n}\mumu_q \}_{n \in \frak{J}}$ (or $\frak{D} = \{\mumu_q \times (\mumu_n \rtimes_{\chi_n}\mumu_q) \}_{n \in \frak{J}})$ be a consistent balanced collection. The collection $\frak{D}$ is called {\sf Frobenius} if all semidirect products $\mumu_n \rtimes_{\chi_n}\mumu_q$ are Frobenius, i.e., all maps $\chi_n$ are of Frobenius type.

\end{defn}
 
 Let us finally give a modification of Theorem \ref{Sav}. Let us again mention that the proof of the new version is exactly the same as the one from \cite[Section 7]{ASav} with small remarks. The author does not claim to be original in this part of the text.  
 
\begin{thm}
\label{ASavnew}
Let~$X$ be a non-trivial Severi--Brauer variety of dimension~$q - 1$ over a field~$\mathbb{K}$, where~$q \geqslant 3$ is prime, $q \neq \Char(\mathbb{K})$ and $\Char(\mathbb{K}) = l > 0$. Let~$G$ be a finite subgroup of~$\Aut(X)$, then there exists a positive integer~$n$ and a map $\chi$ such that~$G$ is isomorphic to a subgroup of~$\mumu_q \times (\mumu_n \rtimes_{\chi} \mumu_q)$, where the semidirect product is balanced and Frobenius.
\end{thm}
\begin{proof}
  Suppose that $A$ is the central simple algebra which corresponds to the Severi--Brauer variety $X$. 
  Consider the subgroup $N_G \lhd G$ which include all the elements of $G$ that admit a lift of finite order in $A^*$ (for more detail see \cite[Corollary 2.2]{ASav}). In the case when $N_G$ is trivial \cite[Lemma 7.7]{ASav} yields the claim.

Suppose now that $N_G$ is non-trivial. Consider a non-trivial normal cyclic subgroup~${\Theta \lhd G}$ obtained by \cite[Proposition 7.5]{ASav}. Consider the subfield $L \subset A$ generated over~$\mathbb{K}$ by the preimage of $\Theta$ under the canonical projection. Since $\Theta$ is a normal subgroup of $G$, the group $G$ acts on $K$ by conjugations, which yields a map $\varphi \colon G \to \Aut(L/\mathbb{K})$. Denote by $G'$ its kernel. By \cite[Theorem 7.1]{ASav} every non-trivial field extension of $\mathbb{K}$ contained in $A$ is maximal, hence any lift of any element of $G'$ lies in $L^*$, and therefore $G' \subset L^*/\mathbb{K}^*$. We now apply \cite[Proposition 7.6]{ASav} to get that $G'$ is isomorphic either to $\mumu_n$ or to $\mumu_n \times \mumu_q$, where $n$ is coprime with $q$ and the unique subgroup of $G'$ isomorphic to $\mumu_n$ consists of all the elements of $G'$ that admit a lift of finite order in $L^*$.

Note that the group $\Aut(L/\mathbb{K})$ is either $\mumu_q$ or trivial. Consider the case when $n = 1$. The group $G$ then has the order dividing $q^2$, so to prove the assertion it remains to exclude the $\mumu_{q^2}$ case. Suppose that $G \cong \mumu_{q^2}$. As $n = 1$, no element of $G' \cong \mumu_{q} \subset \mumu_{q^2}$ admits a lift of finite order, therefore the group $N_G$ is trivial, which contradicts the assumption. This completes the proof in the case when~$n = 1$. 

Suppose now that $n \ne 1$. In this case, the extension $L \supset \mathbb{K}$ is cyclotomic, as it can be generated by the lift of the generator of $\mumu_n$ of finite order, thus it is Galois. Recall from~\cite[~Theorem 7.1]{ASav} that $L$ is of degree $q$ over~$\mathbb{K}$, hence its group of automorphisms~$\Gal(L/\mathbb{K})$ is isomorphic to~$\mumu_q$. Denote by $u \in K$ any preimage of finite order of the generator $x$ of $\mumu_n$. Note that $u$ is a root of unity. The Galois group $\Gal(L/\mathbb{K})$ then acts on the group generated by $u$ in such a way that the only elements preserved by this action are those contained in $\mathbb{K}^*$. Taking quotient by $\mathbb{K}^*$ we thus obtain that the action of $\Gal(L/\mathbb{K}) \cong \mumu_{q}$ on the group generated by $u$ induces an action on $\mumu_n$. 

More precisely morphism which is given by the action is~${\chi: \Gal(L/\mathbb{K}) \to \mumu_n^*}$, where~$\chi$ is a generator of~$\Gal(L/\mathbb{K})$. Since we can choose a lift~$u$ of a generator of $\mumu_n$ such that~${u^n = 1}$ then by Theorem \ref{NI} one has that $$\mumu_q \cong \Gal(L/\mathbb{K}) \cong \Gal(\mathbb{F}_{l}(u)/\mathbb{K} \cap \mathbb{F}_{l}(u))$$ and since $\mathbb{F}_{l}(u)$ is a finite extension of $\mathbb{F}_l$ a generator of the group $\Gal(\mathbb{F}_{l}(u)/\mathbb{K} \cap \mathbb{F}_{l}(u))$ is $$\chi(y): x \to x^{l^{\left(d_0 \cdot d \cdot q^{(k-1)}\right)}},$$ for $y$ the generator of $\mumu_q$ and $x \in \mumu_n$, where $\ord_n(l) = q^kd_0$, such that $d_0$ is not divisible by $q$, and $1 \leqslant d \leqslant q -1$. Note additionally that we got the fact that $v_q(\ord_n(l)) \geqslant 1$ and in particular that $n$ is not divisible by $l$. (This was the only part of the proof which is different from the original version from \cite[Section 7]{ASav}).

We can now conclude the proof considering the image of $G$ under $\varphi$ in $\Gal(L/\mathbb{K}) \cong \mumu_{q}$. If it is trivial, the proof is complete, so consider the case when $\im \varphi \cong \mumu_{q}$. Then $G$ fits into a short exact sequence
$$1 \to G' \to G \xrightarrow{\varphi} \mumu_q\to 1.$$
We proceed by proving that $\varphi$ admits a right inverse. Consider any preimage $\chi_1 \in G$ under $\varphi$ of the generator $\chi$ of the group $\Gal(L/\mathbb{K})$. By the discussion above, $\chi_1$ does not commute with any non-trivial element of $\mumu_n$, thus the order of $\chi_1$ is either $q$ or $q^2$. Due to \cite[Proposition 7.6]{ASav} no element of order $q$ can admit a lift of finite order in $A$, so the order of $\chi_1$ cannot be equal to $q^2$ by \cite[Proposition 4.4]{ASav} applied to the subfield generated by $f_A^{-1}(\chi_1)$, where $f_A: A^* \to A^*/\mathbb{K}^*$ is a canonical projection. Therefore the order of $\chi_1$ is $q$ and there exists an inclusion $\im \varphi \hookrightarrow G$, and thus $G$ is isomorphic either to $\mumu_n \rtimes_{\chi} \mumu_q$ or to
$$(\mumu_n \times \mumu_q) \rtimes_{\chi} \mumu_q \cong (\mumu_n \rtimes_{\chi} \mumu_q) \times \mumu_q.$$ 
The latter isomorphism is due to the fact that there is no non-trivial action of $\mumu_q$ on itself by automorphisms. Moreover, as discussed above, the semidirect products in both of these cases are Frobenius and balanced.

  %It is proving exactly as an original Theorem from~\cite[~Theorem~1.4]{ASav} with very small remarking. Proof is providing in the Section 7. For more details see proof of ~\cite[~Theorem~1.4]{ASav}. Suppose that~$A$ is a central simple algebra, corresponds to~$X$. Then there is a subfield~${L \subset A}$ such that $\phi: G \to \Gal (L/\mathbb{K})$. Since $q$ is a prime and~${\dim_{\mathbb{K}}(A) = q^2}$, so $\Gal(L/\mathbb{K}) = \mumu_q$. It is also proven that $\Ker(\phi) \subset L^*/\mathbb{K}$ and~${\Ker(\phi) = \mumu_n}$ or~${\Ker(\phi) = \mumu_n \times \mumu_p}$. If $n \neq 1$ then by the construction $L$ is a cyclotomic extension of~$\mathbb{K}$ with lift of a generator of the group $\mumu_n$ (which is a subgroup of~$\Ker(\phi)$). Then notice that~${\mumu_n \rtimes_{\chi}\Gal(L/\mathbb{K})}$, where $\chi$ is a generator of $\Gal(L/\mathbb{K})$. Since we can chose a lift~$u$ of a a generator $\mumu_n$ such that $u^n = 1$ then by Theorem \ref{NI} one has that $$\Gal(L/\mathbb{K}) \cong \Gal(\mathbb{F}_{l}(u)/\mathbb{K} \cap \mathbb{F}_{l}(u))$$ and since $\mathbb{F}_{l}(u)$ is a finite extension of $\mathbb{F}_l$ a generator of the group $\Gal(\mathbb{F}_{l}(u)/\mathbb{K} \cap \mathbb{F}_{l}(u))$ is $$\chi(y): x \to x^{l^{\left(d_0 \cdot d \cdot q^{(k-1)}\right)}},$$ for $y$ the generator of $\mumu_q$ and $x \in \mumu_n$, where $\ord_n(l) = q^kd_0$ and $1 \leqslant d \leqslant q -1$. Therefore~${\mumu_n \rtimes_{\chi} \mumu_q}$ will not only balanced, but also a Frobenius. 
\end{proof}

Let us prove some lemmas before proving Theorems \ref{maincon} and \ref{mainconp}. Please pay attention that prime numbers $l$ and $q$ may be equal unless otherwise claimed precisely.

\begin{lm}
    \label{general}
    Let~$\mathbb{K}$ be a field such that~$\Char(\mathbb{K})= l$. Let~$L_1 \supset \mathbb{K}$ and~$L_2\supset \mathbb{K}$ be two field extensions such that~$\left[L_1: \mathbb{K}\right] = \left[L_2: \mathbb{K}\right] = q$ is a prime number. Let~$p_1 \neq p_2 \neq q$ be two different prime numbers. Let~$v_q(\ord_{p_1}(l)) = k_1$ and~$v_q(\ord_{p_2}(l)) = k_2$. Let~$k_1 \neq k_2$ and~$k_1,k_2 \geqslant 1$. Let~$n_1,n_2$ be two natural numbers. Let~$\theta_1 \in L_1$ and~$\theta_2 \in L_2$ such that~$\theta_1^{n_1p_1}, \theta^{n_2p_2}_2 \in \mathbb{K}$. Then~$\theta_1^{n_1} \in \mathbb{K}$ or~$\theta_2^{n_2} \in \mathbb{K}$.
\end{lm}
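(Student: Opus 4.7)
The plan is to argue by contradiction. Suppose that $\theta_1^{n_1} \notin \mathbb{K}$ and $\theta_2^{n_2} \notin \mathbb{K}$, and set $\eta_i = \theta_i^{n_i} \in L_i$, so that $\eta_i^{p_i} \in \mathbb{K}$ but $\eta_i \notin \mathbb{K}$. Since $[L_i : \mathbb{K}] = q$ is prime, $\mathbb{K}(\eta_i) = L_i$. The hypothesis $k_i \geq 1$ gives $q \mid p_i - 1$, and in particular $p_i \neq q$; applying Lemma \ref{leng} to $\eta_i \in L_i$ yields $\alpha_i \in \mathbb{K}$ with $\eta_i^{p_i} = \alpha_i^{p_i}$. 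Then $\zeta_i := \eta_i / \alpha_i \in L_i$ satisfies $\zeta_i^{p_i} = 1$ and $\zeta_i \notin \mathbb{K}$; since its order divides the prime $p_i$ and is not $1$, the element $\zeta_i$ is a primitive $p_i$-th root of unity, and $\mathbb{K}(\zeta_i) = L_i$.

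Next, I would use that the cyclotomic field $\mathbb{F}_l(\zeta_i)$ equals $\mathbb{F}_{l^{d_i}}$, where $d_i = \ord_{p_i}(l)$, so that $\mathbb{K}(\zeta_i)$ is the composite $\mathbb{K} \cdot \mathbb{F}_{l^{d_i}}$. By Theorem \ref{NI},
$$q = [\mathbb{K}(\zeta_i) : \mathbb{K}] = [\mathbb{F}_{l^{d_i}} : \mathbb{F}_{l^{d_i}} \cap \mathbb{K}],$$
so $\mathbb{F}_{l^{d_i}} \cap \mathbb{K} = \mathbb{F}_{l^{d_i / q}}$; equivalently, $\mathbb{K}$ contains $\mathbb{F}_{l^{d_i/q}}$ but does not contain $\mathbb{F}_{l^{d_i}}$.

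Finally, let $F$ denote the Steinitz number of the algebraic closure of $\mathbb{F}_l$ inside $\mathbb{K}$. The inclusion $\mathbb{F}_{l^{d_i/q}} \subseteq \mathbb{K}$ gives $v_{p'}(F) \geq v_{p'}(d_i/q) = v_{p'}(d_i)$ for every prime $p' \neq q$, so the failure of $\mathbb{F}_{l^{d_i}} \subseteq \mathbb{K}$ must be detected at $q$, forcing $v_q(F) < v_q(d_i) = k_i$. Combined with $v_q(d_i/q) = k_i - 1 \leq v_q(F)$, this pins down $v_q(F) = k_i - 1$ for both $i = 1, 2$, whence $k_1 = k_2$, contradicting the hypothesis $k_1 \neq k_2$. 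The main obstacle is pushing everything down to finite subfields via Natural Irrationalities; once the degree-$q$ extension $\mathbb{K}(\zeta_i)/\mathbb{K}$ has been recognized as a purely residue-field phenomenon, the rest is a short $q$-adic valuation count.
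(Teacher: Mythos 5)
Your proof is correct and follows essentially the same route as the paper: reduce $\theta_i^{n_i}$ to a primitive $p_i$-th root of unity via Lemma \ref{leng}, identify $\mathbb{K}(\zeta_i)$ as the compositum $\mathbb{K}\cdot\mathbb{F}_{l^{d_i}}$, and apply Theorem \ref{NI} to get $\mathbb{F}_{l^{d_i}}\cap\mathbb{K}=\mathbb{F}_{l^{d_i/q}}$. The only difference is cosmetic: where the paper derives the final contradiction from an explicit compositum containment $\mathbb{K}\supset\mathbb{F}_{l^{q^{k_1-1}\lcm(m_1,m_2)}}\supset\mathbb{F}_{l^{q^{k_2}m_2}}$, you package the same divisibility count as the statement $v_q(F)=k_i-1$ for the Steinitz number of $\overline{\mathbb{F}_l}\cap\mathbb{K}$, which is a slightly cleaner way to see that $k_1=k_2$.
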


\begin{proof}
    Let~$\ord_{p_1}(l) = q^{k_1}m_1$ and~$\ord_{p_2}(l) = q^{k_2}m_2$. Suppose that $\theta_1^{n_1} \notin \mathbb{K}$ and~$\theta_2^{n_2} \notin \mathbb{K}$. Let~$k_1 >k_2$. By Lemma \ref{n} there are~${\alpha_1, \alpha_2 \in \mathbb{K}}$ such that $$\left(\frac{\theta_1^{n_1}}{\alpha_1}\right)^{p_1} - 1 = \left(\frac{\theta_2^{n_2}}{\alpha_2}\right)^{p_2} - 1 = 0.$$ Notice that decomposition field of polynomial~$x^{p_1} -1$ over~$\mathbb{F}_l$ is~$\mathbb{F}_{l^{q^{k_1}m_1}}$, so~${\frac{\theta_1^{n_1}}{\alpha_1} \in \mathbb{F}_{l^{q^{k_1}m_1}}}$. Moreover $$\mathbb{F}_l\left(\frac{\theta_1^{n_1}}{\alpha_1}\right) = \mathbb{F}_{l^{q^{k_1}m_1}}$$ because all finite field extensions of~$\mathbb{F}_l$ are normal. Then~$\mathbb{K}\left(\frac{\theta_1^{n_1}}{\alpha_1}\right)$ is a composite of~$\mathbb{K}$ and~$\mathbb{F}_{l^{q^{k_1}m_1}}$. By Theorem \ref{NI} $$\left[\mathbb{K}\left(\frac{\theta_1^{n_1}}{\alpha_1}\right) : \mathbb{K}\right] = \left[\mathbb{F}_{l^{q^{k_1}m_1}} : \mathbb{F}_{l^{q^{k_1}m_1}} \cap \mathbb{K}\right].$$ Since~$\mathbb{K} \subset \mathbb{K}\left(\frac{\theta_1^{n_1}}{\alpha_1}\right) \subset L_1$, then either~$\left[\mathbb{K}\left(\frac{\theta_1^{n_1}}{\alpha_1}\right) : \mathbb{K}\right] = 1$ or $$\left[\mathbb{K}\left(\frac{\theta_1^{n_1}}{\alpha_1}\right) : \mathbb{K}\right] = q.$$ In the first case one has that~$\theta_1^{n_1} \in \mathbb{K}$, a contradiction. So, only the second case is possible. In this case one has that $$\mathbb{F}_{l^{q^{k_1}m_1}} \cap \mathbb{K} = \mathbb{F}_{l^{q^{k_1-1}m_1}}.$$ Using exactly the same argument for~$\mathbb{K}\left(\frac{\theta_2^{n_2}}{\alpha_2}\right)$ one can get that $$\mathbb{F}_{l^{q^{k_2}m_2}} \cap \mathbb{K} = \mathbb{F}_{l^{q^{k_2-1}m_2}}.$$ Then~$\mathbb{K} \supset \mathbb{F}_{l^{q^{k_2-1}m_2}}$ and~$\mathbb{K} \supset \mathbb{F}_{l^{q^{k_1-1}m_1}}$. Hence $$\mathbb{K} \supset \mathbb{F}_{l^{q^{k_1-1}\lcm(m_1,m_2)}} \supset \mathbb{F}_{l^{q^{k_2}m_2}},$$ so one has a contradiction with~$\mathbb{F}_{l^{q^{k_2}m_2}} \cap \mathbb{K} = \mathbb{F}_{l^{q^{k_2-1}m_2}}$.
\end{proof}

%\begin{lm}
 %   \label{dif}
 %   Let~$\mathbb{K}$ be a field such that~$\Char(\mathbb{K})= l$. Let~$L \supset \mathbb{K}$ and be a field extension such that~$\left[L: \mathbb{K}\right] = q$ is a prime number. Let~$p_1 \neq p_2 \neq q$ be two different prime numbers. Let~$\ord_{p_1}(l) = q^{k_1}m_1$ and~$\ord_{p_2}(l) = q^{k_2}m_2$, where $m_1$ and $m_2$ are not divisible by $q$. Let~$k_1 \neq k_2$ and~$n = p_1p_2$. Let~$\theta \in L$. Suppose~$\theta^n \in \mathbb{K}$. Then~$\theta^{p_1} \in \mathbb{K}$ or~$\theta^{p_2} \in \mathbb{K}$.
%\end{lm}

%\begin{proof}
  % It follows immediately from Lemma \ref{general} for~$n_1 = \frac{n}{p_1}$ and~$n_2 = \frac{n}{p_2}$. One should take~$\theta_1 = \theta_2 = \theta$ and $L_1 = L_2 = L$.
  % \end{proof}

\begin{cor}
    \label{cri}
    Let~$X$ be a non-trivial Severi--Brauer variety over a field~$\mathbb{K}$, let~${\Char(\mathbb{K}) = l}$ and~$\dim(X) = q-1$, where~$q$ is a prime number. Let~$f \in \Aut(X)$ such that~${f^n = \id}$, where~$n = \prod^m_{i=1}p^{r_i}_i$ and $n$ is not divisible by $q$. Let~$p_1$ and~$p_2$ be prime numbers from the decomposition of $n$ such that~$v_q(\ord_{p_1}(l)) \neq  v_q(\ord_{p_2}(l))$. Then~$f^{\frac{n}{p_1}} = \id$ or~$f^{\frac{n}{p_2}} = \id$.
\end{cor}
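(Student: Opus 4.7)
The plan is to reduce the problem to Lemma \ref{dif} via the same mechanism used in Corollary \ref{k1}: translate the statement about $\Aut(X)$ into a statement about powers of a lifted element in the central simple algebra, trim the exponent down so that only the two relevant primes $p_1$ and $p_2$ appear, and then invoke Lemma \ref{dif}.

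First I would apply Lemma \ref{Aut} to identify $\Aut(X) \cong A^*(\mathbb{K})/\mathbb{K}^*$, where $A$ is the central simple algebra of degree $q$ associated with $X$. Pick any lift $u \in A^*(\mathbb{K})$ of $f$. The subalgebra $\mathbb{K}(u) \subset A$ is a commutative subfield of $A$, and since $A$ has prime degree $q$, the extension $\mathbb{K} \subset \mathbb{K}(u)$ has degree either $1$ or $q$. In the degree-$1$ case, $u \in \mathbb{K}^*$, hence $f = \id$ and the conclusion holds trivially.

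Assume therefore that $[\mathbb{K}(u):\mathbb{K}] = q$ and set $L = \mathbb{K}(u)$. The relation $f^n = \id$ in $\Aut(X)$ means precisely that $u^n \in \mathbb{K}^*$. Now I would set
\[
\theta = u^{n/(p_1 p_2)} \in L,
\]
so that $\theta^{p_1 p_2} = u^n \in \mathbb{K}$. Because $v_q(\ord_{p_1}(l)) \neq v_q(\ord_{p_2}(l))$ by hypothesis, all assumptions of Lemma \ref{dif} (with the product of the two primes playing the role of the lemma's $n$) are satisfied. Applying Lemma \ref{dif} gives $\theta^{p_1} \in \mathbb{K}$ or $\theta^{p_2} \in \mathbb{K}$; these translate back to $u^{n/p_2} \in \mathbb{K}^*$ or $u^{n/p_1} \in \mathbb{K}^*$, i.e. $f^{n/p_2} = \id$ or $f^{n/p_1} = \id$, as required.

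There is essentially no obstacle: the only mild subtlety is to notice that we are free to replace the ambient exponent $n$ by the two-prime product $p_1 p_2$ by first raising to the power $n/(p_1 p_2)$, so that Lemma \ref{dif} (which is stated only for $n = p_1 p_2$) becomes directly applicable. Everything else is bookkeeping with the correspondence between $\Aut(X)$ and $A^*(\mathbb{K})/\mathbb{K}^*$.
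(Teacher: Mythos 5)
Your proposal is correct and follows essentially the same route as the paper: lift to $A^*(\mathbb{K})$ via Lemma \ref{Aut}, observe that $\mathbb{K}(u)$ has degree $1$ or $q$, and apply Lemma \ref{dif} to an element whose $p_1p_2$-th power lies in $\mathbb{K}$. The only cosmetic difference is that the paper first forms $g=f^{n/(p_1p_2)}$ in $\Aut(X)$ and lifts $g$, whereas you lift $f$ and take the power $u^{n/(p_1p_2)}$ inside the algebra; both variants work.
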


\begin{proof}
    %Let us say that~$n = p_1p_2$ from the beginning to remove difficulties in notation. 
    Let $m = \frac{n}{p_1p_2}$ and $g = f^m$. Notice that $$g^{p_1p_2} = f^n = \id.$$
    Let $A$ be the central simple algebra which corresponds to $X$. Since ${\dim(X) = q - 1}$, where~$q$ is a prime number, $A$ is a division algebra. 
    By~Lemma~\ref{Aut} one has that~${\Aut(X)\cong A^*/ \mathbb{K}^*}$, so $g \in A^* / \mathbb{K}^*$. Denote by $u$ any lift of $g$ to $A^*$. Consider the field extension~$\mathbb{K} \subset \mathbb{K}(u)$. This extension is either trivial or has degree~$q$. In the first case one immediately has~$u \in \mathbb{K}^*$, so $$f^{\frac{n}{p_1p_2}} = \id,$$ and so $$f^{\frac{n}{p_1}} = f^{\frac{n}{p_2}} = \id.$$ In the second case by Lemma \ref{general} one has either $u^{p_1} \in \mathbb{K}^*$ and so $$g^{p_1} = f^{\frac{n}{p_2}} = \id,$$ or $u^{p_2} \in \mathbb{K}^*$ and so $$g^{p_2} = f^{\frac{n}{p_1}} = \id.$$ So~$f^{\frac{n}{p_1}} = \id$ or~$f^{\frac{n}{p_2}} = \id$.
\end{proof}

The following definition describes all sets $D = \{n \mid \mumu_n \subset \Aut(X)\}$, where $X$ is some non-trivial Severi--Brauer variety of dimension $q - 1$ over a field of positive characteristic~$l$.

\begin{defn}
\label{qconst}
Let $q$ be an odd prime number and $l \neq q$ another prime number. Let~${D \subset \mathbb{Z}_+}$ be a set of positive integers such that $\gcd(n,l) = 1$ for all $n \in D$. One says that a set $D$ is {\sf $q$-adically constant with respect to the order of $l$}, if there is a positive integer $k$ such that for all $n \in D$ one has that~$v_q(\ord_n(l)) = k$.
\end{defn}

\begin{proof}[Proof of Theorem \ref{maincon}]
 % By Theorem \ref{consistancy} and Theorem \ref{ASavnew} one has that $\frak{A}$ is a consistent balanced collection which is also Frobenius. 
  Suppose $\mumu_q \times (\mumu_n \rtimes_{\chi} \mumu_q) \in \frak{A}$, let us prove that $n \in \frak{I}_{k,q,l}$ for some positive integer $k$. Let~$n = \prod^m_{i=1}p^{r_i}_i$, where $p_i$ are prime numbers such that $p_i \equiv 1(\text{mod }q)$ for~${1 \leqslant i \leqslant m}$. Let~$f \in G$ be a generator of the cyclic group~$\mumu_n$. By Theorem \ref{ASavnew}, one has that all~$p_i \neq l$ for ${1 \leqslant i \leqslant m}$. Let~$p_i$ and~$p_j$ be prime numbers from the decomposition such that~${v_q(\ord_{p_i}(l)) \neq  v_q(\ord_{p_j}(l))}$. By Corollary \ref{cri} one has that either~$f^{\frac{n}{p_i}} = \id$ or~$f^{\frac{n}{p_j}} = \id$. Therefore~$f$ cannot be a generator of the group~$\mumu_n$. So $n \in \frak{I}_{k,q,l}$ and by Theorem \ref{ASavnew} one has that $k \geqslant 1$.
  
Let us choose another group $\mumu_q \times (\mumu_{n_1} \rtimes_{\chi_1} \mumu_q) \in \frak{A}$ such that~$n_1 = \prod^{m_1}_{i=1}t^{s_i}_i$, where $t_i$ are prime numbers such that $t_i \equiv 1(\text{mod }q)$ for~${1 \leqslant i \leqslant m_1}$. Suppose that there are~$1\leqslant i \leqslant m$ and~$1 \leqslant j \leqslant m_1$ such that~${v_q(\ord_{p_i}(l)) \neq v_q(\ord_{t_j}(l))}$. Let~$u_1$ be a generator of the group~$\mumu_{p_i}$ and~$u_2$ be a generator of the group~$\mumu_{t_j}$. Denote by $\theta_1$ any lift of~$u_1$ and by~$\theta_2$ any lift of $u_2$ to~$A^*$. Then by Lemma \ref{general} either~$\theta_1 \in \mathbb{K}$ or~$\theta_2 \in \mathbb{K}$ and this contradicts the fact that~$\theta_1$ and~$\theta_2$ are generators of cyclic groups. So for all~$1 \leqslant i \leqslant m$ and~$1 \leqslant j \leqslant m_1$ one has that~$v_q(\ord_{p_i}(l)) = v_q(\ord_{t_j}(l))$. Therefore~$n_1 \in \frak{I}_{k,q,l}$ for the same $k$ as $n$. So if $$D = \{ n \mid \mumu_n \subset \Aut(X) \text{ and }n\text{ is not divisible by }q\}$$ then $D$ is $q$-adically constant with respect to the order of $l$ by the Definition \ref{qconst}.  
  Now, by Theorem \ref{consistancy} and Theorem \ref{ASavnew}, one has that $\frak{A}$ is a Frobenius consistent balanced collection. 

 Finally let $G$ be a finite group acting on $X$, then by Theorem \ref{ASavnew} one has that~$G$ is a subgroup of $$\mumu_q \times (\mumu_n \rtimes_{\chi} \mumu_q)$$ for some~$n \in \frak{I}_{k,q,l}$ and $\chi$ is balanced and of Frobenius type.
\end{proof}

Now let us prove Theorem \ref{mainconp}. Firstly, let us provide some important lemmas. Let us follow the paper \cite{ASav} and again recall $\widetilde{N}_G$ as finite subgroup of $A^*$ of finite order lifts of elements $G$. Denote $N_G \subset G$ natural projection of the group $\widetilde{N}_G$.

\begin{thm}[{\cite[Corollary 2.2]{ASav}}]
\label{mainAnna}
Let $A$ be a central simple algebra over a field $k$ and let $G$ be a finite subgroup of~$A^*/k^*$. Then all the elements of $G$ that admit a lift of finite order in $A^*$ form a normal subgroup $N_G \lhd G$, such that $G/N_G$ is abelian.
\end{thm}

Let us research more about the structure of the groups $N_G$.

\begin{lm}
\label{critreriaq}
Let $q \geqslant 3$ be a prime number. Let $A$ be a central simple algebra over a field~$\mathbb{K}$ such that $\deg(A) = \Char(\mathbb{K}) = q$. Let $G$ be a finite subgroup of~$A^*/\mathbb{K}^*$. Let~${N_G \lhd G}$ be a subgroup as was described above. Let $g \in G$ and $p$ is the minimal positive integer such that $g^p \in \mathbb{K}^*$. If p is a prime number, then $g \in N_G$ if and only if~$q \neq p$.
\end{lm}  

\begin{proof}

Let $u \in A^*$ be any lift of $g$ to $A^*$. Notice that $u^p \in \mathbb{K}^*$. Notice that $\mathbb{K} \subset \mathbb{K}(u)$ is a field extension and $[\mathbb{K}(u) : \mathbb{K}] = q$, since $A$ is also an algebra over $\mathbb{K}(u)$. 

By Lemma \ref{n} if $q \neq p$ then there exists $\alpha \in \mathbb{K}$ such that $$\left(\frac{u}{\alpha}\right)^p - 1 = 0.$$ Therefore, $$\frac{u}{\alpha} \in \widetilde{N}_G$$ and so $g \in N_G$.

Let $q = p$. Suppose $g \in N_G$, then there exists some lift $v \in A^*$ of $g$ and positive integer~$m$ such that~$v^m = 1$, let $m$ be the minimal such integer. Then $g^m \in \mathbb{K}$, so $m$ is divisible by $q$. Let~$m = qm_0$, then $$(v^{m_0})^q - 1 = (v^{m_0} - 1)^q = 0,$$ and so~$v^{m_0} - 1 = 0$, then $m$ was not the minimal one, a contradiction. Therefore, $g \not \in N_G$.

\end{proof}

\begin{lm}
\label{triv}
Let $q \geqslant 3$ be a prime number. Let $A$ be a central simple algebra over a field~$\mathbb{K}$ such that $\deg(A) = \Char(\mathbb{K}) = q$. Let $G$ be a finite subgroup of~$A^*/\mathbb{K}^*$. Let~${N_G \lhd G}$ be a subgroup as was described above. If $N_G$ is trivial, then $G$ is isomorphic to $\mumu^N_q$ for some positive $N$.
\end{lm}

\begin{proof}
By Theorem \ref{mainAnna} one has that $G$ is abelian. Let $g \in G$ has order $m$. Let us prove that $m = q$, then the structure of $G$ will be obviously $\mumu^N_q$. If there is $p \neq q$ such that~$m$ is divisible by $p$, then we can apply Lemma \ref{critreriaq} to $g^{\frac{m}{p}}$ and get a contradiction with triviality of $N_G$. So the last possibility is that $m = q^l$. Let $u \in A^*$ be any lift of $g$ to $A^*$. Notice that $u^m \in \mathbb{K}^*$. Notice that $\mathbb{K} \subset \mathbb{K}(u)$ is a field extension and $[\mathbb{K}(u) : \mathbb{K}] = q$, since $A$ is also an algebra over $\mathbb{K}(u)$. Let $k_u = u^m \in \mathbb{K}^*$. Consider the polynomial $$f(x) = x^m - k_u \in \mathbb{K}[x].$$ If $k_u \not \in \mathbb{K}^q$, then by Theorem \ref{Leng9} one has that $f(x)$ is irreducible and since $[\mathbb{K}(u) : \mathbb{K}] = p$ one has that $m = q$. If $k_u = k_{u_1}^q$, then $$u^m - k_u = (u^{\frac{m}{q}} -  k_{u_1})^q = 0,$$ so $u^{\frac{m}{q}} -  k_{u_1} = 0$. Therefore $u^{\frac{m}{q}} \in \mathbb{K}^*$, which gives a contradiction with minimality of~$m$.
\end{proof}

\begin{lm}
\label{field}
Let $F/\mathbb{K}$ be a Galois extension, with $\Gal(F/\mathbb{K}) = \mumu_q$ and $\Char(\mathbb{K}) = q$. Let~$H$ be a finite subgroup of $F^*/\mathbb{K}^*$. Then there exists a positive integer $k$ and $n \in \frak{I}_{k,q,q}$ for some positive $k$ such that~$H$ is isomorphic to~$\mumu_n$. 
\end{lm}

\begin{proof}
Denote $\sigma$ as a generator of $\Gal(F/\mathbb{K})$. Let $g \in G$ be some element of the group, and let $\theta \in F$ be a lift of $g$. Then there is an integer $m$ such that $\theta^m \in \mathbb{K}$ and this $m$ is minimal. Consider $$\xi = \frac{\sigma(\theta)}{\theta}.$$ Notice that $$\xi^m = \frac{\sigma(\theta)^m}{\theta^m} = \frac{\sigma(\theta^m)}{\theta^m} = \frac{\theta^m}{\theta^m} = 1.$$ Therefore $\xi$ is root of unity of degree $m$. If there exists $m_0 < n$ such that $\xi^{m_0} = 1$, then $$1 = \xi^{m_0} = \frac{\sigma(\theta)^{m_0}}{\theta^{m_0}},$$ so $$\sigma^{t}(\theta^{m_0}) = \theta^{m_0}$$ for all integer $t$. It means that $\theta^{m_0} \in \mathbb{K}$ and it is a contradiction with minimality of $m$. Therefore $\xi$ is a primitive root of unity of degree $m$. Notice that by Lemma \ref{critreriaq} one has that $m$ is not divisible by $q$. Then by Lemma \ref{root} there is an element $\alpha \in \mathbb{K}$ such that $$\left(\frac{\theta}{\alpha}\right)^m = 1$$ and therefore $f$ can be lifted to an element $\frac{\theta}{\alpha} \in F$ which has an order exactly $m$. So the whole group $G$ can be lifted to an isomorphic $\widetilde{G} \subset F$ and since multiplicative group of a field is always cyclic, $\widetilde{G}$ will be also cyclic as a subgroup of a cyclic group and therefore~$G$ is cyclic.

So the last thing we need to prove is that the order of a generator $G$ lies in $\frak{I}_{k,q,q}$. Denote by $n$ the order of $G$. By Lemma \ref{general} one has that $v_q(\ord_p(q))$ is constant for all prime divisors of $n$.

Consider $\mathbb{K} \cap \mathbb{F}_q(\theta) = \mathbb{F}_{q^s}$, for some positive integer $s$ since $ \mathbb{F}_q(\theta)$ is a finite extension of $\mathbb{F}_q$. 

By Theorem \ref{NI} one has $$[F : \mathbb{K}] = [\mathbb{F}_q(\theta) : \mathbb{F}_{q^s}] = q.$$ Therefore $\mathbb{F}_q(\theta) = \mathbb{F}_{q^{qs}}$ for some integer $s$. One knows that $\mathbb{F}_q(\theta) = \mathbb{F}_{q^d}$, where $d = \ord_n{q}$. Therefore $\ord_n(q) = qs$. So $v_q(\ord_n(q))>0$, therefore there exists a positive integer $k$ such that $n \in \frak{I}_{k,q,q}$.

%Since one can consider finite subgroups which has an order equal to any prime divisor of $m$, we get that for any prime divisor $p_i$ the order $\ord_{p_i}(q)$ is divisible by $q$. Suppose we have $\xi_1$ and $\xi_2$ are roots of unity of degree $p_1$ and $p_2$, where $p_1$ is a prime divisor of $m$ and $p_2$ is a prime divisor of some other $m_1$, such that there is an element of order $m_1$ in $G$ (it is possible that $m = m_1$). Then by Lemmas \ref{general} one has that $$v_q(\ord_{p_1}(q)) = v_q(\ord_{p_2}(q)) > 0.$$ Therefore there exists some positive $k$ such that $n \in $

\end{proof}

Let us recall a result from group theory following the paper \cite{ASav}.

\begin{thm}[{\cite[Corollary 7.4]{ASav}}]
\label{cyc}
Let $N$ be a non-trivial finite group such that it has no non-cyclic Sylow $p$-subgroups for any prime $p$. Then $N$ contains a non-trivial cyclic characteristic subgroup.
\end{thm}

Now we are finally ready to prove Theorem \ref{mainconp}.

\begin{proof}[Proof of Theorem \ref{mainconp}]
Firstly, let us understand the structure of a finite group acting on~$X$, i.e., show the second part of the theorem. Let $G \subset \Aut(X)$ be a finite group and~$A$ be the central simple algebra corresponding to $X$. Consider the subgroup $N_G \lhd G$ as was described above. If $N_G$ is trivial then $G$ is isomorphic to $\mumu^N_q$ for some positive~$N$, by~Lemma \ref{triv}. If $N_G$ is nontrivial, then by Theorem \ref{cyc} there is a nontrivial cyclic subgroup of $N_G$. Let $g$ be a generator of this subgroup and $u$ its lift with finite order. Consider $F = \mathbb{K}(u) \supset \mathbb{K}$. Let $m$ be the minimal positive integer such that $u^m = 1$. Notice that $m$ is not divisible by $q$, since if $m = qm_0$, then $$u^m - 1 = (u^{m_0} - 1)^q = 0,$$ so $m$ is not the minimal one. Then $x^m - 1$ is separable, therefore $\mathbb{K}(u) \supset \mathbb{K}$ is normal and separable, and therefore $F/\mathbb{K}$ is a Galois extension. Since $A$ is an $F$-algebra, then~${[F : \mathbb{K}] = q}$, therefore $\Gal(F/\mathbb{K}) = \mumu_q$. 

Denote by $\phi_g$ an automorphism of $\mathbb{K}(u)$ which defined by $x \mapsto \widetilde{g}^{-1}x\widetilde{g}$, where $\widetilde{g}$ is any lift of $g$ to $A$. Since $\widetilde{g}^{-1} \mathfrak{k} \widetilde{g} = \mathfrak{k}$ for $\mathfrak{k} \in \mathbb{K}$ and ~$\widetilde{g}^{-1} u \widetilde{g} = \mathfrak{k}u^l$, for some integer~$l$ and some element~$\mathfrak{k} \in \mathbb{K}$, so it is actually an automorphism of $F$, which fixes $\mathbb{K}$. Therefore we have a map $\phi: G \to \Gal(F/\mathbb{K})$ which defineds~$g \to \phi_g$.  Consider $H = \Ker(\phi)$ and denote~$\widetilde{H}$ some lift of $H$ to $A$. If $h \in H$, then~$\mathfrak{f} \widetilde{h} = \widetilde{h} \mathfrak{f}$ for all~$\mathfrak{f} \in F$. Therefore~$\widetilde{H} \subset C_A(F)$, where~$C_A(F)$ is a centralizer of $F$ in $A$. Since $F$ is a maximal field, one has that $C_A(F) = F$ and so~$H \subset F$. By Lemma \ref{field}, one has that $H$ is isomorphic to $\mumu_n$, where $n \in \frak{I}_{k,q,q}$ for some positive integer $k$. Notice that for different groups $G$ the number $k$ should be the same, since it can be described by arithmetic properties of the base field~$\mathbb{K}$, i.e. $$k = \max\{l \mid \mathbb{K} \supset \mathbb{F}_{q^l}\} + 1.$$ In particularly one get that the set $D$ is $q$-adically constant with respect to the order of~$q$.
 
The only case that is left is then the image of $\phi$ and the kernel both are nontrivial. Let~$\alpha$ be an element that sends to the generator $\sigma$ of the Galois group. Then $\frak{f}\alpha = \sigma(\frak{f}) \alpha$ for all $\frak{f} \in F$, in particular for all $\frak{f} \in H$, so G is isomorphic to $H \rtimes \mumu_q$ and the semidirect product is balanced. Since we chose a lift~$u$ of a a generator $\mumu_n$ such that~${u^m = 1}$ then by Theorem \ref{NI} one has that $$\Gal(L/\mathbb{K}) \cong \Gal(\mathbb{F}_{q}(u)/\mathbb{K} \cap \mathbb{F}_{q}(u))$$ and since $\mathbb{F}_{q}(u)$ is a finite extension of $\mathbb{F}_l$ a generator of the group $\Gal(\mathbb{F}_{q}(u)/\mathbb{K} \cap \mathbb{F}_{q}(u))$ is $$\chi(y): x \to x^{q^{\left(d_0 \cdot d \cdot q^{(k-1)}\right)}},$$ for $y$ the generator of $\mumu_q$ and $x \in \mumu_n$, where $\ord_n(q) = q^kd_0$, such that $d_0$ is not divisible by $q$, and $1 \leqslant d \leqslant q -1$.

 By the last remark and Theorem \ref{consistancy}, one has that $\frak{A}$ is a Frobenius consistent balanced product.
\end{proof}

\section{Examples in positive characteristic}
\label{exchar}
In this section, we prove Theorems \ref{main} and \ref{mainp}. In particular, we will show that no more conditions on the finite subgroups can arise. To show it, firstly, we need to prove the following lemmas. The first one is a well-know Lifting-the-exponent lemma.

\begin{lm}[Lifting-the-exponent lemma]
    \label{lift}
    Let~$x$ and~$y$ be integers. Let~$n$ be a positive integer and let~$p$ be an odd prime number such that~$x$ and~$y$ are not divisible by~$p$, but~$x - y$ is divisible by~$p$. Then~$v_{p}(x^{n}-y^{n})= v_{p}(x-y)+v _{p}(n).$
\end{lm}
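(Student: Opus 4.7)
The plan is to prove the lemma by the standard three-step reduction: handle the cases $\gcd(n,p)=1$ and $n=p$ separately, and then combine them by induction on $v_p(n)$.

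First I would treat the case when $n$ is coprime to $p$. Writing
\[
x^n - y^n = (x-y)\bigl(x^{n-1} + x^{n-2}y + \cdots + y^{n-1}\bigr)
\]
and using $x \equiv y \pmod p$, the second factor reduces modulo $p$ to $n y^{n-1}$, which is nonzero mod $p$ since $p \nmid n$ and $p \nmid y$. Hence $v_p(x^n-y^n) = v_p(x-y)$, matching the claimed formula because $v_p(n)=0$.

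Next I would treat the crucial case $n=p$. Set $d = x-y$, so $p \mid d$. Expanding each term of the sum $\sum_{i=0}^{p-1} x^i y^{p-1-i}$ via $x = y+d$ and the binomial theorem, the constant-in-$d$ contribution is $p y^{p-1}$, the linear-in-$d$ contribution is $\bigl(\sum_{i=0}^{p-1} i\bigr) d y^{p-2} = \tfrac{p(p-1)}{2} d y^{p-2}$, and every higher term carries a factor $d^j$ with $j \ge 2$. Since $p$ is odd, $\tfrac{p(p-1)}{2}$ is an integer divisible by $p$, so together with $p \mid d$ all terms beyond the first are divisible by $p^2$. Thus
\[
x^{p-1} + x^{p-2}y + \cdots + y^{p-1} \equiv p y^{p-1} \pmod{p^2},
\]
which has $v_p$ exactly $1$ since $p \nmid y$. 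This yields $v_p(x^p - y^p) = v_p(x-y)+1$, the $n=p$ case.

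Finally I would combine the two cases. Write $n = p^k m$ with $\gcd(m,p)=1$. Since $x^{p^j} \equiv y^{p^j} \pmod p$ for every $j$, the coprime case applies to give
\[
v_p(x^n - y^n) = v_p\bigl((x^{p^k})^m - (y^{p^k})^m\bigr) = v_p\bigl(x^{p^k} - y^{p^k}\bigr),
\]
and then induction on $k$ using the $n=p$ case (applied successively to the pair $x^{p^{j}}, y^{p^{j}}$, which remain congruent mod $p$ and coprime to $p$) gives $v_p(x^{p^k}-y^{p^k}) = v_p(x-y)+k$, i.e.\ $v_p(x-y)+v_p(n)$, as required. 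The only delicate point is verifying that $\tfrac{p(p-1)}{2}d$ is divisible by $p^2$ in the middle step; this is exactly where the hypothesis that $p$ is odd is used, and I expect no serious obstacle elsewhere.
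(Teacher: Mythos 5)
Your proof is correct and follows essentially the same route as the paper's: the coprime case via the factorization $x^n-y^n=(x-y)\sum x^iy^{n-1-i}$, the case $n=p$ by expanding the sum modulo $p^2$ (where the oddness of $p$ enters exactly as you indicate), and the general case by combining the two via induction on $v_p(n)$. No gaps.
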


\begin{proof}

Let us start with a base case when~$n$ is not divisible by~$p$. Notice that $$x^n - y^n = (x-y)(x^{n-1} + x^{n-2}y + \ldots + y^{n-1})$$ and $$x^{n-1} + x^{n-2}y + \ldots + y^{n-1} \equiv n x^{n-1} \not \equiv 0 \text{ (mod }p).$$ So one has that $$v_p(x^{n}-y^{n})= v_{p}(x-y) = v_{p}(x-y) + v_{p}(n).$$

Now consider the case when~$n = p$. Let us choose~$k \in \mathbb{Z}$ such that~$y = x + kp$. Notice that for all~$1 \leqslant t <p$ one has $$y^tx^{p - 1 -t} = x^{p-1 -t}(x + kp)^{t} = x^{p-1 -t}(x^t + tkpx^{t-1} + \ldots) \equiv x^{p-1} + ptkx^{p-2} \text{ (mod }p^2).$$ It follows that $$x^{p-1} + x^{p-2}y + \ldots + y^{p-1} \equiv p(x^{p-1} + ptkx^{p-2}) \equiv px^{p-1} \text{ (mod }p^2),$$ and so $$v_p(x^{p}-y^{p})= v_{p}(x-y) + 1= v_{p}(x-y) + v_{p}(n).$$

Now, let us prove the general case. Let~$n = p^bn_1$ and~$n_1$ is not divisible by~$p$. Then by the base case one has $$v_p(x^{n}-y^{n})= v_{p}(x^{p^bn_1} - y^{p^bn_1}) = v_p({(x^{p^b})}^{n_1} - {(y^{p^b})}^{n_1}) = v_{p}(x^{p^b}-y^{p^b}).$$ Then by iterating the case~$n=p$ one has $$v_{p}(x^{p^b}-y^{p^b}) = v_{p}(x^{p^{b-1}}-y^{p^{b-1}}) + 1 = \ldots = v_p(x-y) + b = v_p(x-y) + v_p(n).$$

\end{proof}

\begin{lm}
    \label{lte}
    Let~$p$ be an odd prime number. Let $l$ be an integer such that~$l$ is not divisible by~$p$. Let~$\ord_p(l) = r$ and~$v_p(l^r -1) = s$. Then~$\ord_{p^{s + a}}(l) = p^ar$.
\end{lm}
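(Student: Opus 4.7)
The plan is to use the Lifting-the-Exponent Lemma (Lemma~\ref{lift}) directly: it was proven just above precisely for this kind of calculation, so the whole argument should reduce to a single application of it together with a divisibility argument to pin down the order.

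First I would set $d = \ord_{p^{s+a}}(l)$, the quantity we are trying to compute, and observe that $l^d \equiv 1 \pmod{p^{s+a}}$ implies $l^d \equiv 1 \pmod{p}$ (since $s \geq 1$, because by hypothesis $p \mid l^r - 1$). Hence $r = \ord_p(l)$ divides $d$, so one may write $d = rm$ for some positive integer $m$. The task reduces to showing that the minimal such $m$ is $p^a$.

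Next I would apply Lemma~\ref{lift} to $x = l^r$ and $y = 1$. The hypotheses of that lemma are satisfied: $p$ is odd, neither $l^r$ nor $1$ is divisible by $p$, and $l^r - 1$ is divisible by $p$ (with valuation exactly $s$ by definition). The lemma yields
\[
v_p\bigl((l^r)^m - 1\bigr) = v_p(l^r - 1) + v_p(m) = s + v_p(m).
\]
Therefore the condition $l^{rm} \equiv 1 \pmod{p^{s+a}}$, i.e.\ $v_p(l^{rm} - 1) \geq s + a$, is equivalent to $v_p(m) \geq a$, i.e.\ $p^a \mid m$. The smallest positive $m$ satisfying this is $m = p^a$, so $d = rp^a$, as desired.

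There is no real obstacle here: the lifting-the-exponent lemma has already done the heavy lifting, and the only thing left is the bookkeeping of divisibility. The one point to be careful about is to justify $r \mid d$ before invoking Lemma~\ref{lift}, since Lemma~\ref{lift} requires $x \equiv y \pmod p$, which here translates into the statement that the exponent we raise $l^r$ to is an integer (hence the reduction step $d = rm$). Once this is in place, the computation of $v_p$ is essentially one line.
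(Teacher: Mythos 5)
Your proof is correct and follows essentially the same route as the paper: both arguments reduce to a single application of Lemma~\ref{lift} with $x = l^r$, $y = 1$. You additionally spell out the minimality step (reducing to $d = rm$ and showing the smallest admissible $m$ is $p^a$), which the paper leaves implicit, so your write-up is if anything slightly more complete.
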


\begin{proof}
   Notice that~$p-1$ is divisible by~$r$, so~$r$ is not divisible by~$p$.
  %  Lifting-the-exponent lemma claims that for any integers $x$ and $y$, a positive integer $n$ and an odd prime number $p$ such that~$p\nmid x$ and~$p\nmid y$, but $p |\left( x - y\right)$, the following equality is true:
%$$v_{p}(x^{n}-y^{n})= v_{p}(x-y)+v _{p}(n).$$

    Using this Lemma \ref{lift} in the case~$n = p^a$,~$x = l^r$ and~$y = 1$ one has
    $$v_p(l^{p^ar}-1) = v_p(l^{r} - 1) + v_p(p^a) = s+a.$$
\end{proof}

\begin{lm}
    \label{Gal}
    Let~$l$ be a prime number. Let~$n$ be some integer. Let~$p$ be a prime number such that $p \neq l$ and let~$\ord_p(l) = r$. Let~$L$ be a field extension of~$\mathbb{F}_{l^n}$ by all roots of unity of degree~$p^{t}$, i.e.~$L = \mathbb{F}_{l^n} (\xi_p, \xi_{p^2},\ldots)$, where~$\xi_{p^t}$ is a root of unity of degree~$p^{t}$. Then $$\Gal(L/\mathbb{F}_{l^n}) = \mumu_{\frac{r}{\gcd(r,n)}} \times \mathbb{Z}_{p},$$ where $\mathbb{Z}_p$ denotes the group of $p$-adic integers.
\end{lm}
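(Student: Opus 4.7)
The plan is to realise $L$ as the increasing union of the finite Galois subextensions $L_s := \mathbb{F}_{l^n}(\xi_{p^s})$ for $s \geq 1$, and to recover $\Gal(L/\mathbb{F}_{l^n})$ as the inverse limit $\varprojlim_s \Gal(L_s/\mathbb{F}_{l^n})$. Since $\ord_p(l)$ is defined only when $p \neq l$, each $L_s$ is a finite Galois extension of finite fields, hence has cyclic Galois group. Using that $\mathbb{F}_l(\xi_{p^s}) = \mathbb{F}_{l^{d_s}}$, where $d_s := \ord_{p^s}(l)$, and that inside $\overline{\mathbb{F}_l}$ the compositum of finite subfields corresponds to the $\lcm$ of their degrees, one obtains $L_s = \mathbb{F}_{l^{\lcm(n, d_s)}}$, so
$$[L_s : \mathbb{F}_{l^n}] = \frac{d_s}{\gcd(n, d_s)}.$$

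Next, I would apply Lemma \ref{lte} (with $p$ odd, as required by Lemma \ref{lift}) to compute $d_s$ explicitly. Setting $a := v_p(l^r - 1)$, Lemma \ref{lte} gives $d_s = r$ for $s \leq a$ and $d_s = r p^{s-a}$ for $s \geq a$. Decomposing $n = n_0 p^c$ with $\gcd(n_0, p) = 1$, and using that $\gcd(r, p) = 1$ (since $r \mid p-1$), one has $\gcd(n, r) = \gcd(n_0, r)$. For all $s \geq a + c$ the displayed formula therefore becomes
$$[L_s : \mathbb{F}_{l^n}] = \frac{r p^{s-a}}{p^c \gcd(n_0, r)} = \frac{r}{\gcd(n, r)} \cdot p^{s - a - c},$$
and the two factors on the right are coprime.

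Finally, I would pass to the inverse limit. Each cyclic group $\Gal(L_s/\mathbb{F}_{l^n})$ splits canonically as the product of its prime-to-$p$ part, of constant order $r/\gcd(n, r)$ once $s \geq a+c$, and its $p$-Sylow part, of order $p^{s-a-c}$. The transition maps $\Gal(L_{s+1}/\mathbb{F}_{l^n}) \twoheadrightarrow \Gal(L_s/\mathbb{F}_{l^n})$ are surjections of cyclic groups, hence respect this Sylow decomposition: they restrict to the identity on the prime-to-$p$ factor and to the standard quotient $\mathbb{Z}/p^{s-a-c+1} \twoheadrightarrow \mathbb{Z}/p^{s-a-c}$ on the $p$-part. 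Passing to the inverse limit gives
$$\Gal(L/\mathbb{F}_{l^n}) \;\cong\; \mumu_{r/\gcd(n, r)} \times \mathbb{Z}_p,$$
as claimed. The main obstacle I anticipate is verifying rigorously that the Galois-restriction maps between successive cyclotomic layers really do act as the identity on the prime-to-$p$ component for all sufficiently large $s$; this is essentially the statement that the cyclotomic characters at levels $s$ and $s+1$ are compatible under the projection $(\mathbb{Z}/p^{s+1})^* \twoheadrightarrow (\mathbb{Z}/p^s)^*$, which pins the two factors to their claimed limits.
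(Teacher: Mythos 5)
Your proposal is correct and follows essentially the same route as the paper: identify each finite layer $\mathbb{F}_{l^n}(\xi_{p^s})$ with $\mathbb{F}_{l^{\lcm(n,\,\ord_{p^s}(l))}}$, compute $\ord_{p^s}(l)$ via Lemma~\ref{lte}, and pass to the inverse limit of the resulting cyclic Galois groups. Your extra care with the $\gcd$ bookkeeping and with the Sylow splitting of the transition maps only makes explicit what the paper leaves implicit; in particular the ``obstacle'' you flag is automatic, since a surjection of finite cyclic groups whose prime-to-$p$ parts have equal order necessarily restricts to an isomorphism on those parts.
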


\begin{proof}
    Let~$v_p(l^r -1) = s$. Notice that decomposition field of polynomial~$x^m -1$ over~$\mathbb{F}_{l^{n}}$ is~$\mathbb{F}_{l^{\lcm(n,k)}}$, where~$k = \ord_m (l)$. Then by Lemma \ref{lte} the decomposition field of the polynomial~$x^{p^{{s+ a}}} - 1$ over~$\mathbb{F}_{l^n}$ is~$\mathbb{F}_{l^{\lcm(n,rp^a)}}$. By construction the Galois group of the extension $\mathbb{F}_{l^n}\subset L$ is a profinite group, which is realized as an inverse limit of Galois groups of extensions $$\mathbb{F}_{l^n}(\xi_{p^{s+a}})/\mathbb{F}_{l^n} = \mathbb{F}_{l^{\lcm(n,rp^a)}}/\mathbb{F}_{l^n}.$$ So $$\Gal(\mathbb{F}_{l^n}(\xi_{p^{s+a}})/\mathbb{F}_{l^n}) = \Gal(\mathbb{F}_{l^{\lcm(n,rp^a)}}/\mathbb{F}_{l^n}) = \mumu_{\frac{rp^a}{\gcd(n,rp^a)}}.$$ Therefore $$\Gal(L/\mathbb{F}_{l^n}) = \varprojlim \mumu_{\frac{rp^a}{\gcd(n,rp^a)}} = \mumu_{\frac{r}{\gcd(r,n)}} \times \mathbb{Z}_p.$$
\end{proof}

%\begin{defn}
 %  \label{set}
   % Let~$A_{q,k}$ be a set of all prime numbers~$p$ such that~$v_q(\ord_p(l)) = k$, where~$q$ is a prime number and~$k \geqslant 1$.
%\end{defn}

\begin{proof}[Proof of Theorem \ref{main}]
    Let~$d_0 = \ord_q(l)$. So~$q - 1$ is divisible by~$d_0$, and therefore~$d_0$ is not divisible by~$q$. Let $D = \{n \in \mathbb{Z} \mid \mumu_q \times (\mumu_n \rtimes_{\chi}\mumu_q) \in \frak{A} \}$ for some Frobenius balanced map~$\chi$. Notice that by the definition, there is a positive integer $k$ such that for all $n \in D$ one has that $n$ is divisible only by primes $p$ congruent to~$1$ modulo~$q$, such that~${v_q(\ord_{p}(l)) = k}$. 
    
    Consider the field extension~$L$ of~$\mathbb{F}_{l^{d_0}}$ by all roots of unity of degree~$p^r$ where~$p^r \in D$ and $p$  is a prime number and~$r$ is some arbitrary integer. More precisely, let us enumerate all pairs~$(p,r)$, where~$p$ is a prime number such that~$p \equiv 1 \text{ (mod }q$) and~$r$ is a positive integer, such that $p^r \in D$, by positive integers (if the set $D$ is finite, then enumeration is by first several integers). Let~$(p,r)$ corresponds to~$t$. Then let us denote~$\zeta_t = \xi_{p^r}$, where~$\xi_{p^r}$ is a root of unity of degree~$p^r$. Then~$L = \mathbb{F}_{l^{d_0}}( \zeta_1, \zeta_2, \zeta_3,\ldots)$. If~$D$ was a finite set, then $L$ is a finite extension of $\mathbb{F}_{l^{d_0}}$, otherwise it is an infinite extension. Let us denote $$B_j = \{p \mid \exists r \text{ and } n \in \mathbb{N} \text{ such that } n \leqslant j \text{ and } n \text{ correspond to } (p,r) \},$$  in other words it is a set of all prime numbers which appears in the first $j$ pairs. And $$C_{j,p} = \{r\mid \exists n \in \mathbb{N} \text{ such that } n \leqslant j \text { and } n \text{ correspond to } (p,r) \},$$ in other words it is a set of powers in which $p$ appears in the first $j$ pairs. Let $r_j(p)$ be the maximum of the set~$C_{j,p}$. Let us denote $$L_j = \mathbb{F}_{l^{d_0}}(\zeta_1, \zeta_2, \ldots ,\zeta_j).$$ Notice that $$L_j = \mathbb{F}_{l^{d_0}}(\xi_{N(j)}),$$ where~$\xi_{N(j)}$ is root of unity of degree~$N(j) = \prod_{p \in B_j}p^{r_j(p)}$.
        
    Notice that the decomposition field of polynomial~$x^p -1$ over~$\mathbb{F}_{l^{d_0}}$ is~$\mathbb{F}_{l^{\lcm(d,d_0)}}$, where~${d = \ord_p (l)}$ and the Galois group of this extension is~$\mumu_{\frac{d}{\gcd(d,d_0)}}$. Since~$d_0$ is not divisible by~$q$, so $$v_q\left(\frac{d}{\gcd(d,d_0)}\right) = v_q (d).$$ 
    
    By Lemma \ref{Gal} applied for all $p \in D$ which are prime numbers independently, the Galois group of the extension $L/\mathbb{F}_{l^{d_0}}$ is $$\prod \mumu_{q^k} \times \prod \mumu_N \times \prod\limits_{\substack{p\in D\\ p \text{ prime}}}\mathbb{Z}_p,$$ where in~$\prod \mumu_N$ all ``tails'' from different cyclic groups were put. More precisely, it is a collection of cyclic groups that appears from Lemma \ref{Gal}, which was applied several times, after removing the cyclic group $\mumu_{q^k}$. 
    
Consider $$\sigma = \epsilon^{q^{k-1}}_{q^k} \times  \epsilon^{q^{k-1}}_{q^k} \times \ldots  \times 1 \times 1 \times \ldots \times 0 \times 0 \times \ldots \in \prod \mumu_{q^k} \times \prod \mumu_N \times \prod\limits_{\substack{p\in D\\ p \text{ prime}}}\mathbb{Z}_p,$$ where~$\epsilon_n$ is a generator of a group~$\mumu_n$, such that for all $n \in D$ if $\xi_n \in L_j$ then one has that $$\sigma(\xi_n) = \sigma|_{L_j}(\xi_n) = \chi(\xi),$$ where $\mumu_n \rtimes_{\chi}\mumu_q \in \frak{A}$. Since $\frak{A}$ is a Frobenius balanced collection, one can choose such an element $\sigma$.

Consider a group $\Delta$ generated by $\sigma$. Notice that~$\Delta \simeq \mumu_q$ is a subgroup of order~$q$ of $$\prod \mumu_{q^k} \times \prod \mumu_N \times \prod\limits_{\substack{p\in D\\ p \text{ prime}}}\mathbb{Z}_p.$$ Since~$\Delta$ is a finite subgroup, it is a closed subgroup of~$\Gal(L/\mathbb{F}_{l^{d_0}})$. 
    
    It means that there is~$\Bbbk = L^{\Delta}$ and~$\left[L:\Bbbk\right] = q$. 
    
    %So we can construct by this datum a non-trivial Severi--Brauer variety~$X$. Let~$A=(L(t)/\Bbbk(t),\sigma,t)$ be such a variety. Then by~Lemma~\ref{Aut} one has~$\Aut(X) = A^*/\Bbbk(t)^*$. We claim that $X$ is the variety such that $\frak{A} \subset \Aut(X)$.
 One can construct by the field extension $L(t)/\Bbbk(t)$ a non-trivial Severi--Brauer variety~$X$. Let~${A=(L(t)/\Bbbk(t),\sigma,t)}$ be a corresponding to the variety central simple algebra. Then by Lemma \ref{Aut} one has~${\Aut(X) = A^*/\Bbbk(t)^*}$. We claim that $X$ is the variety we want, i.e., such a variety that any group $G \in \frak{A}$ is acting on $X$.
    
Fix a positive integer $m$ and some non-negative integers $r_1,r_2,\ldots,r_m$. Consider~$m$ different prime numbers~$p_1,\ldots,p_m$, such that $v_q(\ord_{p_i}(l)) = k$ for $1 \leqslant i \leqslant m$. Let~${n = \prod^m_{i = 1}p^{r_i}_i}$. Suppose $n \in D$.

Consider $${\theta = \xi_{p^{r_1}_{{1}}} \cdot \ldots\cdot \xi_{p^{r_m}_{{m}}}}.$$ Since $n \in D$ then $\theta \in L$. Therefore $\theta \in L_j$ for some $j$. Notice that $\sigma(\theta) = \sigma|_{L_j}(\theta)$ by the construction of the Galois group. Since $L_j$ is a finite extension of $\mathbb{F}_{l^{d_0}}$, therefore $\sigma|_{L_j}$ is a power of a Frobenius map, i.e. $$\sigma|_{L_j} (\theta) = \theta^{l^{\left(d_0 \cdot d \cdot q^{(k-1)}\right)}},$$ where $0 < d < q$ is some integer. Moreover by the construction $\theta^{l^{\left(d_0 \cdot d \cdot q^{(k-1)}\right)}} = \chi(\theta),$ where~$\mumu_n \rtimes_{\chi}\mumu_q \in \frak{A}$.

Let~$1 \leqslant x < n$. If~$\theta^x \in \Bbbk$ then~$\theta^x$ is fixed by~$\sigma$, and so by $\sigma|_{L_j}$ this means that $$(\theta^x)^{l^{\left(d_0 \cdot d \cdot q^{(k-1)}\right)}} = \theta^x.$$ So $$x ( l^{\left(d_0 \cdot d \cdot q^{(k-1)}\right)} -1) \text{ is divisible by } n,$$ and so there exists~$p_{{h}}$ for some $1 \leqslant h \leqslant m$ such that $$l^{\left(d_0 \cdot d \cdot q^{(k-1)}\right)} -1 \text{ is divisible by }p_{{h}}.$$ This gives a contradiction with~$k = v_q( \ord_{p_{{h}} }(l))$, because both $d_0$ and $d$ are not divisible by $q$. So the order~$\theta$ in~$A^*/\Bbbk(t)^*$ is exactly~$n$. 

Let $G_n \subset A^*/\Bbbk(t)^*$ be the subgroup generated by images~$\theta$ and~$\alpha$, where $\alpha$ is the standard generator of the cyclic algebra $A$ (so $\alpha^q = t$). Let us prove that $G_n$ is isomorphic to~${\mumu_n \rtimes_{\chi} \mumu_q}$, where the semidirect product is Frobenius and balanced and~${\mumu_q \times (\mumu_n \rtimes_{\chi} \mumu_q) \in \frak{A}}$. The order of~$\theta$ as we proved above is~$n$ and the order of~$\alpha$ is~$q$. By definition one has $$\theta \alpha = \alpha \sigma(\theta) = \alpha \sigma|_{L_j}(\theta) = \alpha \theta^{l^{\left(d_0 \cdot d \cdot q^{(k-1)}\right)}} = \alpha \chi(\theta).$$

Finally, let~$\gamma_1$ be a generator of the cyclic group $(\mathbb{F}_{l^{q^kd_0}})^*$. Notice that $$(\mathbb{F}_{l^{q^kd_0}})^* \subset L_1 = \mathbb{F}_{l^{d_0}}(\zeta_1),$$ since $\zeta_1 = \xi_{p^r}$ for some positive $r$ and some $p$ prime number such that $v_q(\ord_p(l)) = k$ and~$p \in D$. Suppose that $0 < d < q$ is an integer is such that $$\sigma|_{L_1}: x \to x^{l^{\left(d_0 \cdot d \cdot q^{(k-1)}\right)}}.$$ 

Consider $\gamma = \gamma_1^{z}$, where $$z = \frac{l^{\left(d_0 \cdot d \cdot q^k\right)} - 1}{q(l^{\left(d_0 \cdot d \cdot q^{(k-1)}\right)} - 1)}.$$ By Lemma \ref{lift}, this is an integer.

Since $$(\gamma^q)^{l^{\left(d_0 \cdot d \cdot q^{(k-1)}\right)}} = (\gamma_1^{qz})^{l^{\left(d_0 \cdot d \cdot q^{(k-1)}\right)}} = \gamma_1^{qz + l^{\left(d_0 \cdot d \cdot q^k \right)} - 1} = \gamma^q,$$ it follows that $\gamma^q \in \Bbbk$. Notice that for $1 \leqslant y < q$ one has  $$(\gamma^y)^{l^{\left(d_0 \cdot d \cdot q^{(k-1)}\right)} -1} = (\gamma_1^{yz})^{l^{\left(d_0 \cdot d \cdot q^{(k-1)}\right)} - 1} =\gamma_1^{y\frac{l^{\left(d_0 \cdot d \cdot q^k\right)} - 1}{q}}.$$ Since $\gamma_1$ is a generator one has $$\gamma_1^{y\frac{l^{\left(d_0 \cdot d \cdot q^k\right)} - 1}{q}} \neq 1,$$ since by Lemma \ref{lift} one has 
\begin{multline*}
v_q\left(y\frac{l^{\left(d_0 \cdot d \cdot q^k\right)} - 1}{q}\right) = v_q(y(l^{\left(d_0 \cdot d \cdot q^k\right)} - 1)) - 1 =  \\ = v_q(l^{\left(d_0 \cdot d \cdot q^k\right)} - 1) - 1 = v_q(l^{\left(d_0 \cdot q^k\right)} - 1) - 1 < v_q(l^{\left(d_0 \cdot q^k\right)} - 1).$$ 
\end{multline*}
Therefore $$(\gamma^y)^{l^{q^{k-1}d_0}} \neq \gamma^y.$$ This means that the image $\gamma$ has order $q$ in~$A^*/\Bbbk(t)^*$. 

The elements~$\theta$ and~$\gamma$ commute because they both lie in the field $L(t)$. By construction one has $$\gamma \alpha = \alpha \sigma(\gamma) = \alpha \sigma|_{L_1}(\gamma) = \alpha \gamma^{l^{\left(d_0 \cdot d \cdot q^{(k-1)}\right)}}.$$ Notice that $$\gamma^{l^{\left(d_0 \cdot d \cdot q^{k - 1}\right)} -1} = \gamma_1^{\frac{l^{\left(d_0 \cdot d \cdot q^k\right)} - 1}{q}}$$ and $$\left(\gamma_1^{\frac{l^{\left(d_0 \cdot d \cdot q^k\right)} - 1}{q}}\right)^{l^{q^{k-1}d_0}-1} = 1,$$ since $l^{q^{k-1}d_0} - 1$ is dividing by $q$ (remain that $d_0 = \ord_q(l)$). 

Therefore $$\gamma^{l^{\left(d_0 \cdot d \cdot q^{k - 1}\right)}} = \epsilon\gamma,$$ where $$\epsilon = \gamma_1^{\frac{l^{\left(d_0 \cdot d \cdot q^k\right)} - 1}{q}} \in \Bbbk.$$ Therefore elements~$\alpha$ and~$\gamma$ commute in~$A^*/\Bbbk(t)^*$. This means that the group~$G(n)$ generated by images~$\alpha$,~$\theta$ and~$\gamma$ in $A^*/\Bbbk(t)^*$ is isomorphic to~${\mumu_q \times G_n \cong \mumu_q \times (\mumu_n \rtimes_{\chi} \mumu_q) \in \frak{A}}$. 

The integer $m$ and the integers~$r_1,\ldots,r_m$ are arbitrary. Hence, for any $$n = \prod^{m}_{i=1}p^{r_i}_i \in D,$$ there is a subgroup $G(n)$ of $\Aut(X)$ such that~${G(n) \simeq \mumu_q \times (\mumu_n \rtimes_{\chi} \mumu_q).}$ This finishes the proof of the theorem.

\end{proof}

And finally, let us prove Theorem \ref{mainp}

\begin{proof}[Proof of Theorem \ref{mainp}]
    Let $D = \{n \in \mathbb{Z} \mid \mumu_n \rtimes_{\chi}\mumu_q \in \frak{A} \}$ for some Frobenius balanced map. Notice that by the definition there is a positive integer $k$ such that for all $n \in D$ one has that $n$ is divisible only by primes $p$ congruent to~$1$ modulo~$q$, such that ${v_q(\ord_{p}(q)) = k}$. 
    
    Consider the field extension~$L$ of~$\mathbb{F}_{q}$ by all roots of unity of degree~$p^r$ where~$p^r \in D$ and~$p$  is a prime number and~$r$ is some arbitrary integer. More precisely, let us enumerate all pairs~$(p,r)$, where~$p$ is a prime number such that~$p \equiv 1 \text{ (mod }q$) and~$r$ is a positive integer, such that $p^r \in D$, by positive integers (if the set $D$ is finite, then enumeration is by first several integers). Let~$(p,r)$ corresponds to~$t$. Then let us denote~$\zeta_t = \xi_{p^r}$, where~$\xi_{p^r}$ is a root of unity of degree~$p^r$. Then~$L = \mathbb{F}_{q}( \zeta_1, \zeta_2, \zeta_3,\ldots)$. If~$D$ was a finite set, then $L$ is a finite extension of $\mathbb{F}_{q}$, otherwise it is a infinite extension. Let us denote $$B_j = \{p \mid \exists r \text{ and } n \in \mathbb{N} \text{ such that } n \leqslant j \text{ and } n \text{ correspond to } (p,r) \},$$  in other words it is a set of all prime numbers which appears in the first $j$ pairs. And $$C_{j,p} = \{r\mid \exists n \in \mathbb{N} \text{ such that } n \leqslant j \text { and } n \text{ correspond to } (p,r) \},$$ in other words it is a set of powers in which $p$ appears in the first $j$ pairs. Let $r_j(p)$ be the maximum of the set~$C_{j,p}$. Let us denote $$L_j = \mathbb{F}_{q}(\zeta_1, \zeta_2, \ldots ,\zeta_j).$$ Notice that $$L_j = \mathbb{F}_{q}(\xi_{N(j)}),$$ where~$\xi_{N(j)}$ is root of unity of degree~$N(j) = \prod_{p \in B_j}p^{r_j(p)}$.
        
    Notice that the decomposition field of polynomial~$x^p -1$ over~$\mathbb{F}_{q}$ is~$\mathbb{F}_{q^d}$, where~${d = \ord_p (q)}$ and the Galois group of this extension is~$\mumu_{d}$. 
        
    By Lemma \ref{Gal} the Galois group of this extension is $$\prod \mumu_{q^k} \times \prod \mumu_N \times \prod\limits_{\substack{p\in D\\ p \text{ prime}}}\mathbb{Z}_p,$$ where in~$\prod \mumu_N$ all ``tails'' from different cyclic groups were put. More precisely, it is a collection of cyclic groups that appears from Lemma \ref{Gal}, which was applied several times, after removing the cyclic group $\mumu_{q^k}$. 
    
Consider $$\sigma = \epsilon^{q^{k-1}}_{q^k} \times  \epsilon^{q^{k-1}}_{q^k} \times \ldots  \times 1 \times 1 \times \ldots \times 0 \times 0 \times \ldots \in \prod \mumu_{q^k} \times \prod \mumu_N \times \prod\limits_{\substack{p\in D\\ p \text{ prime}}}\mathbb{Z}_p,$$ where~$\epsilon_n$ is a generator of a group~$\mumu_n$, such that for all $n \in D$ if $\xi_n \in L_j$ one has that $$\sigma(\xi_n) = \sigma|_{L_j}(\xi_n) = \chi(\xi),$$ where $\mumu_n \rtimes_{\chi}\mumu_q \in \frak{A}$. Since $\frak{A}$ is a Frobenius balanced collection one can choose such element $\sigma$.

Consider a group $\Delta$ generated by $\sigma$. Notice that~$\Delta \simeq \mumu_q$ is a subgroup of order~$q$ of $$\prod \mumu_{q^k} \times \prod \mumu_N \times \prod\limits_{\substack{p\in D\\ p \text{ prime}}}\mathbb{Z}_p.$$ Since~$\Delta$ is a finite subgroup, it is a closed subgroup of~$\Gal(L/\mathbb{F}_{q})$. 
    
    It means that there is~$\Bbbk = L^{\Delta}$ and~$\left[L:\Bbbk\right] = q$. 
    
    One can construct by the field extension $L(t)/\Bbbk(t)$ a non-trivial Severi--Brauer variety~$X$. Let~${A=(L(t)/\Bbbk(t),\sigma,t)}$ be a corresponding to the variety central simple algebra. Then by Lemma \ref{Aut} one has~${\Aut(X) = A^*/\Bbbk(t)^*}$. We claim that $X$ is the variety we want, i.e., such a variety that any group $G \in \frak{A}$ is acting on $X$.
        
Fix a positive integer $m$ and some non-negative integers $r_1,r_2,\ldots,r_m$. Consider~$m$ different prime numbers~$p_1,\ldots,p_m$, such that $v_q(\ord_{p_i}(q)) = k$ for $1 \leqslant i \leqslant m$. Let~${n = \prod^m_{i = 1}p^{r_i}_i}$. Suppose $n \in D$.

Consider $${\theta = \xi_{p^{r_1}_{{1}}} \cdot \ldots\cdot \xi_{p^{r_m}_{{m}}}}.$$ Since $n \in D$ then $\theta \in L$. Therefore $\theta \in L_j$ for some $j$. Notice that $\sigma(\theta) = \sigma|_{L_j}(\theta)$ by the construction of the Galois group. Since $L_j$ is a finite extension of $\mathbb{F}_{q}$, therefore $\sigma|_{L_j}$ is a power of a Frobenius map, i.e. $$\sigma|_{L_j} (\theta) = \theta^{q^{\left(d \cdot q^{(k-1)}\right)}},$$ where $0 < d < q$ is some integer. Moreover by the construction $\theta^{q^{\left(d \cdot q^{(k-1)}\right)}} = \chi(\theta),$ where~$\mumu_n \rtimes_{\chi}\mumu_q \in \frak{A}$.

Let~$1 \leqslant x < n$. If~$\theta^x \in \Bbbk$ then~$\theta^x$ is fixed by~$\sigma$, and so by $\sigma|_{L_j}$ this means that $$(\theta^x)^{q^{\left(d \cdot q^{(k-1)}\right)}} = \theta^x.$$ So $$x ( q^{\left(d \cdot q^{(k-1)}\right)} -1) \text{ is divisible by } n,$$ and so there exists~$p_{{h}}$ for some $1 \leqslant h \leqslant m$ such that $$q^{\left(d \cdot q^{(k-1)}\right)} -1 \text{ is divisible by }p_{{h}}.$$ This gives a contradiction with~$k = v_q( \ord_{p_{{h}} }(q))$, because $d$ is not divisible by $q$. So the order~$\theta$ in~$A^*/\Bbbk(t)^*$ is exactly~$n$. 

Let $G_n \subset A^*/\Bbbk(t)^*$ be the subgroup generated by images~$\theta$ and~$\alpha$, where $\alpha$ is the standard generator of the cyclic algebra $A$ (so $\alpha^q = t$). Let us prove that $G_n$ is isomorphic to~${\mumu_n \rtimes_{\chi} \mumu_q \in \frak{A}}$, where $\chi$ is a balanced map of Frobenius type. The order of~$\theta$ as we proved above is~$n$ and the order of~$\alpha$ is~$q$. By definition one has $$\theta \alpha = \alpha \sigma(\theta) = \alpha \sigma|_{L_j}(\theta) = \alpha \theta^{q^{\left(d \cdot q^{(k-1)}\right)}} = \alpha \chi(\theta).$$

The integer $m$ and the integers~$r_1,\ldots,r_m$ are arbitrary. Hence, for any $$n = \prod^{m}_{i=1}p^{r_i}_i \in D,$$ there is a subgroup $G(n)$ of $\Aut(X)$ such that~${G(n) \cong \mumu_n \rtimes_{\chi} \mumu_q.}$ 

Finally, notice that all elements $\alpha + \frak{k}$, where $\frak{k} \in \Bbbk(t)$ has order $q$, since $$(\alpha + \frak{k})^q = \alpha^q + \frak{k}^q \in \Bbbk(t)$$ and by the definition of cyclic algebra $(\alpha + \frak{k})^t \notin \Bbbk(t)$. Since $\Bbbk(t)$ is an infinite field and all elements of type $\alpha + \frak{k}$ commute, for any positive integer $N$ there is a group $\mumu_q^N$ which acts on $X$. One of the examples is the group generated by $\alpha +t, \alpha + t^2, \ldots , \alpha + t^N$ which is isomorphic exactly to $\mumu_q^N$. This finishes the proof of the theorem.

\end{proof}

\section{Comments}
\label{com}
In this section, several observations are made.

\begin{st}
    \label{trfin}
    Let $l$ be a prime number. Let $\mathbb{K} \subset \overline{\mathbb{F}_l}$ be a field. Then there is no non-trivial Severi--Brauer variety over $\mathbb{K}$.
    %Then $X$ is trivial, i.e. $X \simeq \mathbb{P}^{\dim(X)}_{\Bbbk}$
\end{st}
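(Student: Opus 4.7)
The plan is to translate the statement into the language of central simple algebras via the correspondence recalled in the introduction, and then reduce to the classical fact that the Brauer group of a finite field is trivial (Wedderburn's little theorem).

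First I would recall that a Severi--Brauer variety of dimension $n-1$ over $\mathbb{K}$ is trivial if and only if the corresponding central simple algebra of degree $n$ is isomorphic to a matrix algebra $\Mat_n(\mathbb{K})$. Hence it suffices to show that every finite-dimensional central simple algebra $A$ over $\mathbb{K}$ is split.

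Next I would perform a descent step. Since $\mathbb{K} \subset \overline{\mathbb{F}_l}$, the field $\mathbb{K}$ is algebraic over $\mathbb{F}_l$, and any finitely generated subring of $\mathbb{K}$ is a finite field. Fix a $\mathbb{K}$-basis $e_1,\dots,e_{n^2}$ of $A$ and write the structure constants $e_ie_j=\sum_k c_{ij}^{k}e_k$ with $c_{ij}^{k}\in\mathbb{K}$. The finite collection $\{c_{ij}^{k}\}$ generates a finite subfield $\mathbb{F}_{l^m}\subset\mathbb{K}$, so one obtains an $\mathbb{F}_{l^m}$-algebra $A_0=\bigoplus_i \mathbb{F}_{l^m}\cdot e_i$ with $A\cong A_0\otimes_{\mathbb{F}_{l^m}}\mathbb{K}$. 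A short check (using that $A$ is central simple over $\mathbb{K}$ and that the centre and the two-sided ideals of $A_0$ remain so after base change) shows that $A_0$ is a central simple algebra over $\mathbb{F}_{l^m}$.

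Finally, I would invoke Wedderburn's little theorem: every finite division ring is commutative. Combined with Wedderburn's structure theorem quoted in the introduction, this forces any central simple algebra over the finite field $\mathbb{F}_{l^m}$ to be a matrix algebra $\Mat_n(\mathbb{F}_{l^m})$. Base-changing to $\mathbb{K}$ gives $A\cong\Mat_n(\mathbb{K})$, so the associated Severi--Brauer variety is trivial, which is exactly what we need.

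The main obstacle is the descent step: one needs to make sure that $A_0$ really is central simple over $\mathbb{F}_{l^m}$ and not just an associative algebra. This follows formally because simplicity and the condition $Z(A_0)=\mathbb{F}_{l^m}$ can be read off from the structure constants, but it is the only non-formal point of the argument; everything else is immediate from Wedderburn and the algebraicity of $\mathbb{K}/\mathbb{F}_l$.
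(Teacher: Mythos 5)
Your proof is correct and follows essentially the same route as the paper: both reduce to the case of a finite base field (the paper by observing that the variety is already defined over a finite subfield of $\mathbb{K}$, you by descending the central simple algebra via its structure constants) and then invoke the triviality of the Brauer group of a finite field. The paper cites Gille--Szamuely for that last step while you use Wedderburn's little theorem directly, but the substance is identical.
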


\begin{proof}
    Let $X$ be a Severi--Brauer variety over $\mathbb{K}$. Notice that $X$ is actually defined over some finite extension $\mathbb{L}$ of $\mathbb{F}_l$. By \cite[Proposition 6.2.3 and Theorem 6.2.6]{G-S}, one has that any Severi--Brauer variety over $\mathbb{L}$ is trivial. Thus $X$ is trivial. %Scince $X$ is algebraic variety, then there are exist a finite set of polynomial $f_1,f_2,\ldots,f_m \in \Bbbk[x_1,x_2,\ldots,x_h]$. Let $$D_i = \{a_{n_i},\ldots,a_{0}, \text{ where } a_j \text{ are all coefficients of polynomial }f_i\}.$$ Since $\Bbbk$ is algebraic, all $\mathbb{F}_l \subset \mathbb{F}_l(D_i)$ is finite for any $1 \leqslant i \leqslant m$. So $\mathbb{F}_l \subset \mathbb{F}_l(D_1,D_2,\ldots,D_m)$ is also finite. Let $\mathbb{L}\coloneqq \mathbb{F}_l(D_1,D_2,\ldots,D_m)$. Notice that $X$ is defined over $\mathbb{L}$ and this completes the proof since $\mathbb{L}/\mathbb{F}_l$ is finite.
\end{proof}

One may wonder if there is only a finite set of different cyclic groups which act on the fixed Severi--Brauer variety. By the construction from Theorem \ref{example}, there are Severi--Brauer varieties such that there are infinitely many cyclic groups which act on the varieties. But it is still a question in the case of positive characteristic. 

\begin{defn}
   \label{set}
    Let~$A_{q,k}$ be a set of all prime numbers~$p$ such that~$v_q(\ord_p(l)) = k$, where~$q$ is a prime number and~$k \geqslant 1$.
\end{defn}

The following observation shows that the set $A_{q,k}$ is infinite.
\begin{st}
\label{infin}
    Let $q$ and $l$ be two prime numbers. Let $k$ be any positive integer. Then there is an infinite set $B_{q,k}$ of prime numbers~$p$ such that \begin{equation}
        v_q(p-1) = k
        \label{first}
    \end{equation} and \begin{equation}
        v_q(\ord_p(l)) \geqslant k.
        \label{second}
    \end{equation}
\end{st}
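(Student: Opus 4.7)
The plan is to apply the Chebotarev density theorem to the Galois extension $M = \mathbb{Q}(\zeta_{q^{k+1}}, l^{1/q})$ of $\mathbb{Q}$, where $\zeta_{q^{k+1}}$ is a primitive $q^{k+1}$-th root of unity. Conditions \eqref{first} and \eqref{second} on $p$ together say precisely that $v_q(p-1) = k$ and that $l$ is not a $q$-th power modulo $p$; it therefore suffices to find infinitely many $p$ with $p \equiv 1 + q^k \pmod{q^{k+1}}$ and $l$ not a $q$-th power modulo $p$. Since $p \equiv 1 \pmod q$, by Kummer theory the latter is equivalent to $p$ not splitting completely in $\mathbb{Q}(\zeta_q, l^{1/q})$, so both conditions can be read off from the Frobenius class of $p$ in $M/\mathbb{Q}$.

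To set up the Galois theory I first verify that $[M : \mathbb{Q}(\zeta_{q^{k+1}})] = q$, by a ramification argument: if $l$ were a $q$-th power in $\mathbb{Q}(\zeta_{q^{k+1}})$, then for any prime $\mathfrak{p}$ above $l$ one would have $q \mid v_\mathfrak{p}(l)$, but $v_\mathfrak{p}(l) = 1$ since $l \neq q$ is unramified in the cyclotomic field. Write $G = \Gal(M/\mathbb{Q})$ and $H = \Gal(M/\mathbb{Q}(\zeta_{q^{k+1}})) \cong \mumu_q$, so that $G/H \cong (\mathbb{Z}/q^{k+1})^*$.

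The central step is to construct an element $\tau \in G$ whose image in $G/H$ is $\sigma_0 := 1 + q^k$ and which does not fix $l^{1/q}$. I would pick any lift $\tilde{\sigma}_0 \in G$ of $\sigma_0$; since $\sigma_0 \equiv 1 \pmod q$ (using $k \geq 1$), $\tilde{\sigma}_0$ fixes $\zeta_q$. If $\tilde{\sigma}_0(l^{1/q}) \neq l^{1/q}$, set $\tau := \tilde{\sigma}_0$; otherwise set $\tau := \tilde{\sigma}_0 h$ for any nontrivial $h \in H$, so that $\tau(l^{1/q}) = \zeta_q^i l^{1/q} \neq l^{1/q}$ for some $i \not\equiv 0 \pmod q$. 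A direct computation shows that every conjugate $\gamma \tau \gamma^{-1}$ sends $l^{1/q}$ to $\zeta_q^{ci} l^{1/q}$ with $c \not\equiv 0 \pmod q$ (where $\gamma(\zeta_q) = \zeta_q^c$), so the whole conjugacy class of $\tau$ fails to fix $l^{1/q}$. By Chebotarev there are then infinitely many primes $p$ unramified in $M$ whose Frobenius lies in this conjugacy class; for each such $p$, the image of $\mathrm{Frob}_p$ in $G/H$ equals $\sigma_0$, giving condition \eqref{first}, while the nontrivial action on $l^{1/q}$ gives condition \eqref{second}, so these primes all lie in $B_{q,k}$. The main obstacle is the Galois construction of $\tau$ and the verification that its entire conjugacy class moves $l^{1/q}$, which rests on the Kummer relation between $\gamma(l^{1/q})$ and $\gamma^{-1}(l^{1/q})$.
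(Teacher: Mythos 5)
Your proposal is correct and follows essentially the same route as the paper: the extension $\mathbb{Q}(\zeta_{q^{k+1}}, l^{1/q})/\mathbb{Q}$, translation of \eqref{first} into a congruence mod $q^{k+1}$ and of \eqref{second} into the Frobenius moving $l^{1/q}$, and then Chebotarev. Your write-up is in fact somewhat more careful than the paper's, since you verify $[M:\mathbb{Q}(\zeta_{q^{k+1}})]=q$ and check that the condition of not fixing $l^{1/q}$ is invariant on the whole conjugacy class (a point the paper glosses over by treating the Frobenius as a single element $(p,\alpha_p)$).
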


\begin{proof}
If $l \in B_{q,k}$, one may remove $l$ from $B_{q,k}$ without affecting finiteness, so assume~${l \notin B_{q,k}}$. Notice that if $p \neq l$ then $$v_q(\ord_p(l)) \leqslant  v_q(p-1) = k,$$ so for $p \in B_{q,k}$ one has that $v_q(\ord_p(l)) = k$, therefore $p \in A_{q,k}.$ Therefore $B_{q,k} \subset A_{q,k}$.

 Consider the field extension~$\mathbb{Q} \subset \mathbb{Q}(\epsilon_{q^{k+1}}, l^{\frac{1}{q}})$, where $\epsilon_{q^{k+1}}$ is a primitive root of unity of degree $q^{k+1}$. This is a Galois extension with Galois group~$(\mumu_{q^{k+1}})^* \ltimes \mumu_{q}$. Consider a Frobenius element $\mathfrak{p}$ corresponding to the ideal generated by a prime number~$p$ (for more details see~\cite[Chapter 8]{Miln}). Let us say that in the Galois group $\mathfrak{p}$ corresponds to~$(p, \alpha_p)$. The first condition \eqref{first} on~$p$ rewrites as~$p = 1 + q^km$ and~$p \neq 1 + q^{k+1}m$, which are just conditions on the first term of $\mathfrak{p}$ in the Galois group. The second condition~\eqref{second} can be rewritten as $$l^{\frac{p-1}{q}} \not \equiv 1 (\text{mod }p).$$ Let the Frobenius element acts on $l^{\frac{1}{q}}$. On the one hand it is just~$l^{\frac{p}{q}}$ modulo ideal~$(p)$, on the other hand it is $$(\epsilon_{q^{k+1}}^{q^k})^{\alpha_p}l^{\frac{1}{q}}.$$ So the second condition just means that $$(\epsilon_{q^{k+1}}^{q^k})^{\alpha_p} \not\equiv 1 (\text{mod } p).$$ And this is just~$\alpha_p \neq 0$ modulo~$q$, which gives us the second condition on the second term of $\mathfrak{p}$. By Chebotarev's density theorem (see \cite[Theorem 8.31]{Miln}), there is a set of some positive measure of prime numbers which satisfies both conditions, so this set is infinite.

\end{proof}

\appendix
\section{An application to $G$-birationally rigidty}
\label{Ap}
In this section, we apply the results of classification of finite groups acting on non-trivial Severi--Brauer varieties of dimension $q -1$, where $q$ is a prime number. The main result here is that any non-trivial  Severi--Brauer $G$-variety over a field $\mathbb{K}$ with dimension~${q -1}$ is not $G$-birationally rigid if $q \neq \Char(\mathbb{K})$ and $q$ is a prime number. 

Let us define a $G$-birationally rigid variety. Recall the following common definitions.

\begin{defn}[{\cite[Definition 1.1.5]{Ch-S}}]
A variety $X$ endowed with an action of a finite group $G$ and a $G$-equivariant surjective morphism $\phi: X \to S$ with connected fibers to some variety $S$ with an action of $G$ is a {\sf $G$-Mori fiber space} if 

\begin{enumerate} 
\item[(A)] it has terminal singularities,
\item[(B)] all $G$-invariant Weil divisors on $X$ are Cartier divisors,
\item[(C)]  the dimension of $S$ is strictly smaller than the dimension of $X$,
\item[(D)]  the anticanonical class $-K_X$ is $\phi$-ample,
\item[(E)] $\rk\Pic(X)^G = \rk \Pic (S)^G + 1$.
\end{enumerate}  

If $S$ is a point, then $X$ is called a {\sf $G$-Fano variety}.
\end{defn}

%Informally, variety~$X$ is called birationally rigid if there is no birational maps between $X$ and a variety~$Y$ from some special class of varieties unless~$X$ is not isomorphic~$Y$. 

\begin{defn}[{\cite[Definition 3.1.1]{Ch-S}}]
The $G$-Fano variety $X$ is called {\sf $G$-birationally rigid} if the following two conditions are satisfied:
\begin{enumerate}
\item[(A)] there is no $G$-birational map $X \dashrightarrow Y$ such that $Y$ is a $G$-Mori fiber space over a positive dimensional variety;
\item[(B)] if there is a $G$-birational map $\xi : X \dashrightarrow  Y$ such that $Y$ is a $G$-Fano variety, then

   \begin{enumerate}
      \item[(B1)] the variety $Y$ is isomorphic to $X$;
      \item[(B2)] there is a $G$-birational map $\rho \in \Bir^G(X)$ such that the composition~${\xi \circ \rho: X \dashrightarrow  Y}$ is a biregular $G$-morphism.
   \end{enumerate}
   
\end{enumerate}

\end{defn}

For more details, see, for example, \cite[Chapter 3]{Ch-S}. Note that for any finite group $G$ acting on the Severi--Brauer variety $X$, one has that~$X$ is a $G$-Fano variety, since $X$ is smooth and $\rk\Pic X = 1$.

Recall a useful fact about Severi--Brauer varieties.
\begin{lm}[{\cite[Corollary 5.3.5]{G-S}}]
\label{mini}
Let $X$ be a Severi--Brauer variety and $A$ be a central simple algebra corresponding to $X$. Then $A$ is a division algebra if and only if $X$ is minimal, i.e., there is no twisted-linear subvariety in $X$.
\end{lm}

By Wedderburn's theorem (see e.g.~{\cite[~Theorem~2.1.3]{G-S}}) for any central simple algebra~$A$ there is a division algebra $D$ and an integer $n \geqslant 1$, such that $A \cong \Mat_n(D)$. Therefore if $\deg(A) = q$, where $q$ is a prime number, then $A$ is either a division algebra or a matrix algebra over the base field. This means that non-trivial Severi--Brauer varieties of dimension $q - 1$ correspond to division algebras and so by Lemma \ref{mini} these Severi--Brauer varieties are minimal. 

Before proving any statements, let us give the following definition:

\begin{defn}
Let $X$ be a Severi--Brauer variety over a field $\mathbb{K}$. Let $P$ be a point on~$X$. We call $P$ {\sf a general point} if there is no proper twisted-linear subvariety containing $P$.
\end{defn}

Recall that a point $P$ is separable if its residue field is a separable extension of the base field. 

\begin{st}
\label{lembir}
Let $q \geqslant 3$ be a prime number. Let~$X$ be a non-trivial Severi--Brauer variety of dimension~$q - 1$ over a field~$\mathbb{K}$, where $\Char(\mathbb{K}) = l$ and $l \neq q$ (it is possible that~$l = 0$). Let $G$ be a finite group acting on $X$. Then there is a separable general point of degree $q$ on $X$ which is fixed by all elements of the group $G$.
\end{st}

\begin{proof}
By Theorem \ref{Sav} there exists an integer $n$ such that $G$ is a subgroup of~${\mumu_q \times (\mumu_n \rtimes \mumu_q)}$. Choose the group~${\mumu_q \times (\mumu_n \rtimes \mumu_q)}$ such that its supgroup $\mumu_n$ is also acting on $X$. Suppose $n > 1$. Let $f$ be any lift of the generator of $\mumu_n$. By Lemma \ref{root}, there is an element $g \in \mathbb{K}$ such that $f^n = g^n$ . Then $u = \frac{f}{g}$ is a lift of a generator of~$\mumu_n$ such that~${u^n - 1 = 0}$. Let us make a base change to a separable closure~$\mathbb{K}^{sep}$. Consider fixed points on $X_{\mathbb{K}^{sep}}$. Notice that~$X$ becomes isomorphic to~$\mathbb{P}(V)$, where $V$ is a vector space of dimension $q$. Denote by $\phi$ this isomorphism between $X_{\mathbb{K}^{sep}}$ and $\mathbb{P}(V)$. Then fixed points of $u$ on $X_{\mathbb{K}^{sep}}$ corresponds to eigenvectors of the operator~${u' = \phi u \phi^{-1}}$ on~$V$. Notice that $$(u')^n = (\phi^{-1}u\phi)^n = \phi^{-1}\cdot 1 \cdot \phi = 1.$$Since by Lemma \ref{p=l} one has that $n$ is not divisible by $l$ (if $l > 0$), then polynomial~${u^n - 1}$ is separable and therefore $u'$ is diagonalizable over $\mathbb{K}^{sep}$. Then notice that the fixed by $u$ subvariety of $X_{\mathbb{K}^{sep}}$ of dimension $r$ corresponds to the eigensubspace of the operator $u'$ of dimension $r+1$. Suppose there is a fixed subvariety of dimension at least $1$. Consider one of such a varieties which has the maximal dimension and denote it $Y$. Then let us act on~$Y$ by various~$\sigma \in \Gal(\mathbb{K}^{sep}/\mathbb{K})$. Since action by $u$ was defined over~$\mathbb{K}$ one has that~$u$ and~$\sigma$ commute for all $\sigma \in \Gal(\mathbb{K}^{sep}/\mathbb{K})$, so~$\sigma(Y)$ is again a subvariety fixed by $u$. Note that either $Y = \sigma(Y)$ or $Y$ and $\sigma(Y)$ are disjoint, since $Y$ has a maximal dimension. Consider a twisted-linear subvariety, containing the union of all~$\sigma(Y)$. It is $\Gal(\mathbb{K}^{sep}/\mathbb{K})$-invariant, therefore it is a twisted-linear subvariety defined over~$\mathbb{K}$, but since~$\dim(X) = q - 1$ one has that $X$ is minimal, so the union of all~$\sigma(Y)$ is the whole~$X$. Let us move now by the map~$\phi$ to the vector space $V$, then we have that a union of several disjoint subspaces, i.e., subspaces intersected trivially, of the same dimension $r+1\geqslant 2$ generates a $q$ dimensional vector space, which is impossible since $q$ was a prime number. Therefore, all fixed components are just points. Since points correspond to eigenvectors and there are no eigensubspaces of dimension at least 2, there are exactly $q$ fixed points. 

Denote by $P$ the union of these $q$ points. This point is general since $X$ is a minimal Severi~--~Brauer variety and therefore there is no twisted-linear subvariety containing $P$. Note that~$P$ is defined over $\mathbb{K}$,since $P$ is $\Gal(\mathbb{K}^{sep}/\mathbb{K})$-invariant. Consider a splitting field~$F$ of the polynomial $T^n - 1$ taken as a polynomial over $\mathbb{K}$. Since $T^n - 1$ is a separable polynomial,~$\mathbb{K} \subset F$ is a separable field extension. Note that the point $P$ becomes a union of $q$ different $F$-rational points, therefore $P$ is a separable point. 

Let us prove that $P$ is fixed by the whole group~$G$. Let $\alpha$ be a generator of any $\mumu_q$ which normalize $\mumu_n$. We can consider extension of this action after the base change to $\mathbb{K}^{sep}$. This action will commute with action of the Galois group on $X_{\mathbb{K}^{sep}}$. Notice that~${\alpha^{-1}u\alpha = u^t}$ for some $1 \leqslant t \leqslant n - 1$. Consider $\alpha(x)$ for some $x$ fixed by $u$, compose it with $u$, then $$u\alpha(x) = \alpha(u^t(x)) = \alpha(x).$$ So $\alpha(x)$ is fixed by $u$ and therefore $\alpha$ sends $u$-fixed points to $u$-fixed points.

Now, suppose $n = 1$. Then $G \subset \mumu_q \times \mumu_q$. Let~${\alpha \in G}$ be a generator of $\mumu_q$. Let~$u$ be any lift of~$\alpha$ into $A$. Since $\alpha$ has an order $q$ in $A^*/\mathbb{K}^*$ then there exists $\frak{k} \in \mathbb{K}$ such that~${u^q - \frak{k} = 0}$. Let us prove that $\frak{k} \notin \mathbb{K}^q$. Suppose~${\frak{k} \in \mathbb{K}^q}$, then there is a lift of $\alpha$ into~$A$ of order $q$, i.e. there is~$u \in A$ such that $u^q - 1 = 0$. Consider the following two cases.

Suppose $\Char(\mathbb{K}) = 0$, then by Theorem \ref{NI} one has $$q = \left[\mathbb{K}(u) : \mathbb{K}\right] = \left[\mathbb{Q}(u) : \mathbb{Q}(u) \cap \mathbb{K}\right].$$ It gives a contradiction since $$q-1 = \left[\mathbb{Q}(u) : \mathbb{Q} \right] = \left[\mathbb{Q}(u) : \mathbb{Q}(u) \cap \mathbb{K}\right] \cdot \left[ \mathbb{Q}(u) \cap \mathbb{K} : \mathbb{Q}\right] ,$$ and $q$ cannot divide $q - 1$. 

Suppose $\Char(\mathbb{K}) = l$ a prime number. Then  by Theorem \ref{NI} one has $$q = \left[\mathbb{K}(u) : \mathbb{K}\right] = \left[\mathbb{F}_l(u) : \mathbb{F}_l(u) \cap \mathbb{K}\right].$$ It gives a contradiction since $$ \ord_q(l) = \left[\mathbb{F}_l(u) : \mathbb{F}_l \right] = \left[\mathbb{F}_l(u) : \mathbb{F}_l(u) \cap \mathbb{K}\right] \cdot \left[ \mathbb{F}_l(u) \cap \mathbb{K} : \mathbb{F}_l\right] ,$$ and $q$ cannot divide $q - 1$, but $\ord_q(l)$ is a divisor of $q - 1$.

So $\frak{k} \notin \mathbb{K}^q$. Let us make a base change to separable closure $\mathbb{K}^{sep}$. Then~$X$ becomes isomorphic to $\mathbb{P}(V)$, where $V$ is a vector space of dimension $q$. Denote by $\phi$ this isomorphism between $X_{\mathbb{K}^{sep}}$ and $\mathbb{P}(V)$. Consider an operator $u' = \phi u \phi^{-1}$ on $V$. Since $u^q - \frak{k} = 0$, one has that~${(u')^q - \frak{k} = 0}$ also holds. By Theorem \ref{Leng9}, one has that $(u')^q - \frak{k}$ is an irreducible polynomial, so it is the minimal polynomial of the operator $u'$. Since $q \neq l$, this polynomial is also separable, i.e. it has $q$ different roots. Since $\dim(V) = q$, the degree of the characteristic polynomial of an operator $u'$ is $q$ and it should be divisible by the minimal polynomial, i.e., by $(u')^q - \frak{k}$. So they must be proportional and therefore the characteristic polynomial has $q$ different roots. So there are~$q$ different eigenvectors with different eigenvalues. These eigenvectors corresponds to $q$ different $u$-fixed points on~$X_{\mathbb{K}^{sep}}$ and there are no more $u$-fixed points. Denote the set of these fixed points again by~$P$. By the same argument as in the case $n > 1$, one has that $P$ is a general separable point of degree~$q$ fixed by $G$. And again since $u$ and~$\Gal(\mathbb{K}^{sep}/\mathbb{K})$ commute~$P$ is defined over $\mathbb{K}$.
\end{proof}

Now we are ready to prove the main result of the appendix. 

\begin{thm}
Let $X$ be a non-trivial Severi--Brauer variety over a field $\mathbb{K}$ of dimension~$q - 1$, where $q \geqslant 3$ is a prime number and $\Char(\mathbb{K}) \neq q$. Let $G$ be a finite group acting on $X$. Then $X$ is not $G$-birationally rigid.
\end{thm}

\begin{proof}
Let $A$ be a central simple algebra corresponding to $X$. Denote by $[A]$ the class of $A$ in Brauer group $\Br(\mathbb{K})$. By Proposition \ref{lembir}, there is a~$G$-fixed general separable point of degree $q$. Then one can make a Cremona transformation in this point from $X$ to another Severi--Brauer variety $Y$. Let $B$ be a central simple algebra corresponding to $Y$. Since Cremona transformation becomes over $\mathbb{K}^{sep}$ a rational map defined by polynomials of degree $q - 1$, then by \cite[Theorem 3.3.7(iii)]{GSh} one has that~${[B] = (q-1)[A]}$. By \cite[~Theorem~4.5.13]{G-S}) one has that $q[A] = 0$, therefore $Y$ corresponds to an opposite algebra~$A^{op}$ of $A$. Since $X$ is non-trivial, then $[A]$ is a non-trivial class in Brauer group~$\Br(\mathbb{K})$. Since $$[A] + [A^{op}] = 0$$ and $$q[A] = 0$$ for the prime number $q \geqslant 3$, then $[A] \neq [A^{op}]$. Therefore, $X$ is not isomorphic to $Y$ and so $X$ is not $G$-birationally rigid. 
\end{proof}


\begin{thebibliography}{99}

\bibitem{Chat} F.~Ch{\^{a}}telet.
\emph{Variations sur un th\`eme de H. Poincar\'{e}}.
Ann. Sci. \'{E}cole Norm. Sup. (3), \textbf{61} (1944), 249--300.

\bibitem{Ch-S} I. Cheltsov, C. Shramov. \emph{Cremona Groups and the Icosahedron}. 2016, by Taylor and Francis Group, LLC.

\bibitem{G-S} Ph. Gille, T. Szamuely.
\emph{Central simple algebras and Galois cohomology}.
Cambridge Studies in Advanced Mathematics, \textbf{101}.
Cambridge University Press, Cambridge, 2006.

\bibitem{GSh} S. Gorchinskiy, C. Shramov.
\emph{Unramified Brauer group and its applications}.
Translations of Mathematical Monographs, \textbf{246}.
American Mathematical Society, Providence, RI, 2018.

\bibitem{Leng} S. Lang. \emph{Algebra}. Graduate Texts in Mathematics, 211; 0072-5285, Springer Science+Business Media New York, 2002.

\bibitem{Miln} J.S. Milne. \emph{Algebraic number theory (v3.08)}. 2020. Available at www.jmilne.org/math/

\bibitem{ASav} A. Savelyeva. \emph{Finite subgroups of automorphism groups of Severi--Brauer varieties}. arxiv.2503.20514

\bibitem{Sh20B} C. Shramov. \emph{Birational automorphisms of Severi--Brauer surfaces}. Sb. Math., \textbf{211} (2020), no.~3, 466--480.

\bibitem{ShV} C. Shramov, V. Vologodsky. \emph{Boundedness for finite subgroups of linear algebraic groups}. Trans. Amer. Math.
Soc. 374, no. 12, 9029–9046, 2021.

\bibitem{Sh20} C. Shramov. \emph{Finite groups acting on Severi--Brauer surfaces}. Eur. J. Math. 7, no. 2, 591–612, 2021.

\bibitem{semi} D.R. Taunt. \emph{Remarks on the isomorphism problem in theories of construction of finite groups}.
Proc. Cambridge Phil. Soc., 51 (1955), 16–24.

\bibitem{Vicul} A.V. Vikulova. \emph{Birational automorphism groups of Severi--Brauer surfaces over the field of rational numbers}. Int. Math. Res. Not. IMRN 2024, no. 24, 14638--14654. 


\end{thebibliography}
\end{document}